\documentclass[11pt,reqno]{amsart}

\usepackage[a4paper,margin=30mm]{geometry}
\usepackage{amsmath,amssymb}
\usepackage{cite}
\usepackage{booktabs}
\usepackage{array}
\usepackage{tabularx}
\usepackage{float}
\usepackage{hyperref}
\usepackage{amsfonts}
\usepackage{enumerate,mathtools,bbm,bm,tcolorbox,mathrsfs}
\mathtoolsset{showonlyrefs}
\usepackage{tikz}
\usepackage{tikz-cd}
\usetikzlibrary{cd}
\usetikzlibrary{arrows.meta}
\usetikzlibrary{matrix, arrows, positioning}
\usetikzlibrary{shadows}
\usetikzlibrary{arrows.meta}
\colorlet{diagramcolor}{black}
\usetikzlibrary{calc}
\usepackage{caption}
\ifdefined\pdfmapfile
\pdfmapfile{=cm.map}
\fi

\allowdisplaybreaks
\numberwithin{equation}{section}

\newtheorem{theorem}{Theorem}[section]
\newtheorem{proposition}[theorem]{Proposition}
\newtheorem{lemma}[theorem]{Lemma}
\newtheorem{corollary}[theorem]{Corollary}
\newtheorem{assumption}[theorem]{Assumption}
\newtheorem{definition}[theorem]{Definition}

\theoremstyle{remark}
\newtheorem{remark}[theorem]{Remark}

\DeclareMathOperator{\Tr}{Tr}
\DeclareMathOperator{\rank}{rank}

\newcommand{\Sp}[1]{\mathfrak S^{#1}}

\newcommand{\R}{\mathbb{R}}
\newcommand{\C}{\mathbb{C}}

\newcommand{\Z}{\mathbb{Z}}
\newcommand{\T}{\mathbb{T}}

\newcommand{\abs}[1]{\left|#1\right|}

\newcommand{\BE}{\mathbb{E}}

\newcommand{\CS}{\mathcal{S}}

\newcommand{\SC}{\mathscr{C}}

\newcommand{\SF}{\mathscr{F}}

\newcommand{\SH}{\mathscr{H}}

\newcommand{\SN}{\mathscr{N}}

\newcommand{\FS}{\mathfrak{S}}

\newcommand{\al}{\alpha}
\newcommand{\be}{\beta}
\newcommand{\ep}{\varepsilon}
\newcommand{\ga}{\gamma}

\newcommand{\ps}{\psi}
\newcommand{\ph}{\varphi}
\newcommand{\ta}{\tau}
\renewcommand{\th}{\theta}
\newcommand{\lm}{\lambda}
\newcommand{\si}{\sigma}
\newcommand{\om}{\omega}
\newcommand{\ka}{\kappa}

\newcommand{\ze}{\zeta}

\newcommand{\De}{\Delta}

\newcommand{\Ps}{\Psi}

\newcommand{\Om}{\Omega}

\newcommand{\pl}{\partial}
\newcommand{\wt}{\widetilde}

\newcommand{\Ck}[1]{\left\{#1\right\}}

\newcommand{\Dk}[1]{\left[#1\right]}
\newcommand{\K}[1]{\left(#1\right)}

\newcommand{\No}[1]{\left\| #1 \right\|}

\newcommand{\I}{\infty}

\newcommand{\tw}{\frac{1}{2}}
\newcommand{\na}{\nabla}

\newcommand{\II}{\mathbbm{1}}

\newcommand{\gs}{\gtrsim}

\renewcommand{\Re}{\operatorname{Re}}

\newcommand{\ls}{\lesssim}
\newcommand{\rg}{\rangle}
\renewcommand{\lg}{\langle}
\makeatletter
\def\l@section{\@tocline{1}{0pt}{0pt}{}{}}%

\def\l@subsection{\@tocline{2}{0pt}{2.5em}{}{}}%

\def\l@subsubsection{\@tocline{3}{0pt}{3.0em}{}{}}%

\makeatother

\title{Endpoint orthonormal Strichartz estimates}
\author[S. Hadama]{Sonae Hadama}
\address{The University of Osaka, Japan}
\email{hadama.sonae.sci@osaka-u.ac.jp}

\author[S. Toyoshima]{Shunya Toyoshima}
\address{Saitama University, Japan}
\email{s.toyoshima.845@ms.saitama-u.ac.jp}

\thanks{The authors are grateful to Professor Neal Bez for his valuable advice. The first author was supported by JSPS KAKENHI Grant Number 26KJ0203.}
\date{}

\subjclass[2020]{Primary 35B45; Secondary 47B10, 42B37, 35Q41, 35Q49.}
\keywords{Orthonormal Strichartz estimates, Strichartz estimates, Schr\"odinger equation, transport equation.}

\begin{document}
\begin{abstract}
We establish the orthonormal Strichartz estimates in the abstract Keel--Tao framework, under precisely the same hypotheses as in their original theorem. As an important consequence, we resolve the endpoint conjecture for the Schr\"odinger equation in dimensions $d\ge 2$, which was first raised by Frank, Lewin, Lieb, and Seiringer. This consequence also yields an affirmative answer to the conjecture for the free transport equation, which was stated in dual form by Bennett, Bez, Guti\'errez, and Lee.
As a further application, we also establish a refinement of the Strichartz estimate for a single function.
\end{abstract}

\maketitle

\tableofcontents
\section{Introduction}\label{sec:introduction}
In this paper, we focus on the orthonormal extension of classical Strichartz estimates.
The classical Strichartz estimate controls space-time norms of solutions to dispersive equations.
For the free Schr\"odinger equation on $\R^d$, it can be written as
\begin{equation}\label{eq:classical strichartz}
	\||e^{it\Delta}\ph|^2\|_{L^p_t(\R;L^q_x(\R^d))}
	\ls \|\ph\|_{L^2(\R^d)}^2
\end{equation}
for admissible pairs of exponents $(p,q)\in [1,\I]\times[1,\I]$ such that
\begin{equation}\label{eq:full range}
\frac2p+\frac dq = d, \quad (d,p,q)\ne (2,1,\I).
\end{equation}
Strichartz first obtained this type of estimate through Fourier restriction \cite{Strichartz1977}.
Although there is a vast literature on the theory of Strichartz estimates for a single function, we refer only to the seminal paper by Keel and Tao \cite{KeelTao1998}, which proved \eqref{eq:classical strichartz} in the full range of exponents given by \eqref{eq:full range}.

If we naively apply \eqref{eq:classical strichartz} and the triangle inequality, we only obtain
\begin{equation}\label{eq:naive}
	\bigg\|\sum_j\nu_j|e^{it\Delta}\ph_j|^2\bigg\|_{L^p_t(\R;L^q_x(\R^d))} \ls \sum_j |\nu_j|
\end{equation}
for every sequence $\nu=(\nu_j)_j$ and for every orthonormal system $(\ph_j)_j$ in $L^2(\R^d)$.
However, we did not use the orthogonality of $(\ph_j)_j$ at all to prove \eqref{eq:naive}.
We are interested in whether we can improve \eqref{eq:naive} to
\begin{equation}\label{eq:intro-free-orthonormal}
	\bigg\|\sum_j\nu_j|e^{it\Delta}\ph_j|^2\bigg\|_{L^p_t(\R;L^q_x(\R^d))}
	\ls \bigg(\sum_j |\nu_j|^\al\bigg)^{1/\al}
\end{equation}
for some $\al>1$ by using the orthogonality.

The problem of extending classical inequalities for a single function to estimates for orthonormal systems has a long history.
An important early example is the Lieb--Thirring inequality, which was first proved in \cite{LiebThirring1975}.
Lieb and Thirring used this inequality in their proof of stability of matter.
A similar extension of the Sobolev inequality to orthonormal systems was proved by Lieb in \cite{Lieb1983}.
These inequalities have been studied extensively, and giving a comprehensive review is beyond the scope of this paper.
However, we should emphasize that the study of such estimates for orthonormal systems has been closely connected with mathematical physics.

In \cite{FrankLewinLiebSeiringer2014}, Frank, Lewin, Lieb, and Seiringer first proved \eqref{eq:intro-free-orthonormal} for exponents $p,q,\al\in[1,\I]$ such that
\begin{equation}\label{eq:range 1}
	1\leq q \le 1 + \frac2d,\quad \al \le \al_*(q):= \frac{2q}{q+1}.
\end{equation}
Note that the Schatten exponent $\al_*(q)$ is optimal in the sense that \eqref{eq:intro-free-orthonormal} fails if $\al>\al_*(q)$.
In \cite{FrankSabinRestriction2017}, Frank and Sabin extended \eqref{eq:range 1} to the full range
\begin{equation}\label{eq:intro-known-sharp-range}
	1\le q < \frac{d+1}{d-1},\quad \al \le \al_*(q).
\end{equation}
The range $q<(d+1)/(d-1)$ is optimal in the sense that \eqref{eq:intro-free-orthonormal} fails for $q=(d+1)/(d-1)$ and $\al=\al_*(q)$.
In \cite{FrankLewinLiebSeiringer2014}, Frank, Lewin, Lieb, and Seiringer already identified this endpoint and observed a logarithmic divergence in their argument.
Since the strong estimate \eqref{eq:intro-free-orthonormal} fails at this endpoint, the natural question is whether the restricted-type estimate
\begin{equation}\label{eq:conjecture}
	\bigg\|\sum_j \nu_j |e^{it\De}\ph_j|^2 \bigg\|_{L^{\frac{d+1}{d}}_t(\R;L^{\frac{d+1}{d-1}}_x(\R^d))} \ls \|\nu\|_{\ell^{\frac{d+1}{d},1}}
\end{equation}
holds or not, where $\ell^{(d+1)/d,1}$ is the Lorentz quasi-norm (see \eqref{eq:ell lorentz}).
Frank, Lewin, Lieb, and Seiringer first suggested in \cite{FrankLewinLiebSeiringer2014} that \eqref{eq:conjecture} might hold.
In \cite{FrankSabinTraceIdeals2017}, Frank and Sabin mentioned this weak-type Schatten estimate as an open question.
Later, Bez, Hong, Lee, Nakamura, and Sawano explicitly stated \eqref{eq:conjecture} as a conjecture in \cite{BezHongLeeNakamuraSawano2019}.
They also proved that the restricted-type estimate \eqref{eq:conjecture} fails when $d=1$.

The orthonormal Strichartz estimates are closely related to kinetic transport equations.
A semiclassical limiting argument shows that an orthonormal Strichartz estimate for the free Schr\"odinger evolution implies the corresponding
Strichartz estimate for the spatial density of solutions to the free transport equation;
see \cite[Proposition 5.1]{BezHongLeeNakamuraSawano2019}.
The free transport analogue of \eqref{eq:conjecture} is given by
\begin{equation}\label{eq:conjecture 2}
	\No{\int_{\R^d} f_0(x-tv,v) dv}_{L^{\frac{d+1}{d}}_t(\R;L^{\frac{d+1}{d-1}}_x(\R^d))} \ls \|f_0\|_{L_{x,v}^{\frac{d+1}{d},1}},
\end{equation}
where $L^{q,r}$ is the standard Lorentz space.
The estimate \eqref{eq:conjecture 2} was conjectured in \cite[Section 3]{BennettBezGutierrezLee2014} in a dual form.

In the present paper, we prove \eqref{eq:conjecture} and \eqref{eq:conjecture 2} for all $d\ge 2$ as a corollary of a more general estimate in an abstract setting.

\subsection{Setting and main abstract result}
Throughout this paper, we use the following notation.
\begin{itemize}
	\item Let $I\subset\R$ be a non-empty interval, which can be unbounded.
	\item Let $(X,\mu)$ be a $\si$-finite measure space such that $L^2(X)$ is separable.
	\item Let $H$ be a separable complex Hilbert space.
\end{itemize}
The above conventions apply to every theorem, definition, and assumption in this paper.

We also define two kinds of Lorentz spaces.
For $\al\in[1,\I)$ and $\be\in[1,\I]$, define
\begin{equation}\label{eq:ell lorentz}
	\|\nu\|_{\ell^{\al,\be}}
	:=
	\begin{dcases}
		\bigg(\sum_{j\ge 1} \frac{1}{j} (j^{1/\al} \nu_j^*)^\be \bigg)^{1/\be} &\quad \text{if } 1\le \be < \I,\\
		\sup_{j\ge 1}j^{1/\al} \nu_j^* &\quad \text{if } \be=\I,
	\end{dcases}
\end{equation}
where $(\nu_j^*)_j$ is the rearrangement of $(|\nu_j|)_j$ in decreasing order.  
Moreover, we write $\ell^\al := \ell^{\al,\al}$.
By using the $\ell^{\al,\be}$ quasi-norm, we define the Schatten--Lorentz class as follows.
\begin{definition}[Schatten--Lorentz class]\label{def:Schatten Lorentz}
	Let $K$ be a separable complex Hilbert space.
	Let $A$ be a compact operator on $K$ and let $\nu = (\nu_j)_{j\ge 1}$ be its singular values.
	For $\al\in [1,\I)$ and $\be\in[1,\I]$, we define
	\begin{equation}\label{eq:weak-norm}
		\|A\|_{\FS^{\al,\be}(K)} := \|\nu\|_{\ell^{\al,\be}}.
	\end{equation}
	Moreover, we write $\FS^\al(K):=\FS^{\al,\al}(K)$, which is a standard Schatten class.
\end{definition}

The following two assumptions are essential in our main result.
They are the same assumptions as in the work by Keel and Tao \cite{KeelTao1998}. 
See also \cite{Nguyen2024,Hoshiya2025c,FengMondalSongWu2026}.
The first assumption is a generalization of the unitarity of $e^{it\Delta}$.
\begin{assumption}[Energy estimate assumption]\label{ass:energy}
For each $t\in I$, we assume $U(t):H \to L^2(X)$ is bounded.
For every $\ph\in H$, the map $t\mapsto U(t)\ph$ is continuous.
Moreover, 
\begin{equation}\label{eq:B}
 B:=\sup_{t\in I} \|U(t)\|_{H\to L^2(X)}<\infty.
\end{equation}
We impose $B>0$ to exclude a trivial case.
\end{assumption}

The next assumption is a generalization of the dispersive estimate of $e^{it\Delta}$. 
\begin{assumption}[Dispersive estimate assumption]\label{ass:dispersive}
There exist $\sigma>1/2$ and $C_0>0$ such that, for all $t,\ta\in I$ with $t\neq \ta$,
\begin{equation}\label{eq:dispersive}
 \|U(t)U(\ta)^*\|_{L^1(X)\to L^\infty(X)}
 \le \frac{C_0}{|t-\ta|^{\si}}.
\end{equation}
\end{assumption}

We now state the central result in this paper.
\begin{theorem}[Endpoint estimate]\label{thm:weak}
Assume Assumptions \ref{ass:energy} and \ref{ass:dispersive}.
Let 
\begin{equation}
		p=\frac{2\si+1}{2\si},\quad q=\frac{2\si+1}{2\si-1},\quad r=2\si+1.
	\end{equation}
	Then, the restricted-type orthonormal Strichartz estimate
	\begin{equation}\label{eq:endpoint}
		\bigg\|\sum_j \nu_j |U(t)\ph_j|^2 \bigg\|_{L^p(I;L^q(X))} \le C \|\nu\|_{\ell^{p,1}}
	\end{equation}
	holds for all sequences $(\nu_j)_j\in \ell^{p,1}$ and for all orthonormal systems $(\ph_j)_j$ in $H$.
	Equivalently, the weak-type Schatten estimate
	\begin{equation}\label{eq:weak main}
		\No{\int_I U(t)^* V(t) U(t)dt}_{\FS^{r,\I}(H)} \le C \|V\|_{L^r(I;L^{r/2}(X))}
	\end{equation}
	holds for all $V \in L^r(I;L^{r/2}(X))$.
	Moreover, we can take $C = C_\sigma C_0^{2/r}B^{2(r-2)/r}$, where $C_\sigma$ is a constant depending only on $\si$.
\end{theorem}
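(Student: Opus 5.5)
The plan is to prove the weak-type Schatten bound \eqref{eq:weak main} and then obtain \eqref{eq:endpoint} from it by the duality argument of Frank--Sabin, carried out in Lorentz form. The pairing of $\FS^{r,\I}$ with $\FS^{r',1}$, where $r'=p$, and the identity $\Tr(\gamma\, U(t)^*VU(t))=\int V\rho_{U(t)\gamma U(t)^*}$ turn a bound on $\sum_j\nu_j|U(t)\ph_j|^2$ in $L^p_tL^q_x$ into exactly \eqref{eq:weak main}, since $L^{r}_tL^{r/2}_x$ is the dual of $L^p_tL^q_x$. One caveat: the sup over orthonormal systems gives the $\ell^{p,1}$ side only up to the usual restricted-type reduction to characteristic sequences, which is harmless.

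\emph{Step 1: reduction to a bilinear operator.} For $V\ge0$ I write $V=|W|^2$ with $W\in L^{2r}_tL^{r}_x$, and set $A\ph(t)=W(t)U(t)\ph$. Then $\int_I U^*VU\,dt=A^*A$, whose singular values are those of $AA^*=W\,\mathcal T\,\overline W$. Here $\mathcal T$ has operator kernel $U(t)U(\ta)^*$ on $L^2(I\times X)$. Since $\|A^*A\|_{\FS^{r,\I}}=\|AA^*\|_{\FS^{r,\I}}$, it suffices to prove
\[
\|W_1\mathcal T W_2\|_{\FS^{r,\I}}\le C\,\|W_1\|_{L^{2r}_tL^r_x}\|W_2\|_{L^{2r}_tL^r_x}.
\]
General complex $V$ is then handled by splitting into four nonnegative parts.

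\emph{Step 2: dyadic pieces.} Following Keel--Tao, I decompose $\mathcal T=\sum_{k\in\Z}\mathcal T_k$, where $\mathcal T_k$ keeps $|t-\ta|\sim 2^k$. For each piece I obtain two families of estimates.
\begin{itemize}
\item Hilbert--Schmidt bound. Using \eqref{eq:dispersive}, the kernel of $W_1\mathcal T_kW_2$ is pointwise at most $C_02^{-k\si}|W_1(t,x)||W_2(\ta,y)|$. Hence $\|W_1\mathcal T_kW_2\|_{\FS^2}\ls C_0 2^{-k\si}\|W_1\|_{L^a_tL^2_x}\|W_2\|_{L^a_tL^2_x}\,2^{k(\ldots)}$, where Hölder in $t$ on the strip $|t-\ta|\sim2^k$ supplies the power of $2^k$ for a range of $a$.
\item Operator-norm bound. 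Using \eqref{eq:B}, $\|W_1\mathcal T_kW_2\|_{\FS^\I}\le B^2\|W_1\|_{L^{\tilde a}_tL^\I_x}\|W_2\|_{L^{\tilde a}_tL^\I_x}2^{k(\ldots)}$. This follows by factoring $\mathcal T_k$ through $H$ and applying Young/Hölder on the strip.
\end{itemize}
Complex interpolation with an analytic family $W^{z}$ (Stein interpolation in Schatten classes) between these gives $\FS^{s}$ bounds on $W_1\mathcal T_kW_2$ with $W_i\in L^{a_i}_tL^{b_i}_x$. The constant is $2^{-k\beta(a_1,a_2,\ldots)}$, and the exponent $\beta$ vanishes exactly at the endpoint scaling $a_i=2r$, $b_i=r$. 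Tracking the constants yields $C_0^{2/r}B^{2(r-2)/r}$.

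\emph{Step 3: summation in $k$.} This is the main obstacle. At the endpoint $\beta=0$, so the $k$-sum diverges logarithmically, which is the divergence observed by Frank--Lewin--Lieb--Seiringer. I would perturb the exponents, $W_1\in L^{a_1}L^{b_1}$ and $W_2\in L^{a_2}L^{b_2}$ on both sides of the endpoint line, so that $\beta$ takes both signs. I would then apply the bilinear real interpolation lemma from Keel--Tao's endpoint argument, equivalently Bourgain's summation trick, with Schatten-valued target. Schematically, $\sum_k T_k$ with $\|T_k\|_{X_0}\ls2^{k\ep_0}$ and $\|T_k\|_{X_1}\ls2^{-k\ep_1}$ lies in $(X_0,X_1)_{\theta,\I}$. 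Taking $X_0,X_1$ to be Schatten classes with $(\FS^{s_0},\FS^{s_1})_{\theta,\I}=\FS^{r,\I}$, and the inputs as real interpolation spaces of mixed $L^aL^b$ in which $L^{2r}L^r$ embeds, gives the claimed weak-type bound.

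The delicate point is matching the output weak parameter $\I$ with the input parameters so that the bilinear lemma applies. Keel--Tao need $\ell^1$ summation because their target is strong type. Here only weak type is required in $\FS$, but the inputs must still be in the Lebesgue, not Lorentz, spaces $L^{2r}_tL^r_x$. I expect to handle this by keeping the strong-type input spaces, as in Keel--Tao's $\ell^1$-in-$k$ argument, while relaxing only the target to its $\I$-interpolation space.
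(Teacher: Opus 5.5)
\textbf{Verdict: there is a genuine gap at Step 3.} Your Steps 1 and 2 are fine. Interpolating the Hilbert--Schmidt bound $C_0 2^{k(1-2/a-\si)}$ with the energy bound $B^2 2^{k(1-2/a)}$ gives $\|W_1\mathcal T_kW_2\|_{\FS^{b}}\ls 2^{k(1-2/a-2\si/b)}\|W_1\|_{L^a_tL^b_x}\|W_2\|_{L^a_tL^b_x}$, whose exponent vanishes at $(a,b)=(2r,r)$. (One small point: $\mathcal T_k$ does not factor through $H$ because of the cutoff $\II_{(|t-\ta|\sim 2^k)}$, so the operator-norm bound should come from the triangle inequality and Young in $t$.)

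The problem is Step 3, and it is not bookkeeping: the mechanism you invoke cannot produce \eqref{eq:weak main}.
\begin{itemize}
\item To use the Keel--Tao bilinear lemma you must take sequence-valued targets $C_j=\ell^\I_{\be_j}(\FS^{s_j})$, with $(1-\th)\be_0+\th\be_1=0$ at the endpoint, and then apply the summation map $(x_k)_k\mapsto\sum_k x_k$.
\item That map is unbounded on $(C_0,C_1)_{\th,\rho}$ unless $\rho=1$. Sequences $x_k=c_kP$, with one fixed rank-one projection $P$ and $c\in\ell^\rho$, lie in this space, yet $\|\sum_k x_k\|=|\sum_k c_k|$. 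The Schatten target gives no orthogonality between the pieces $W_1\mathcal T_kW_2$.
\item With $\rho=1$ the lemma puts the inputs in $(A_0,A_1)_{\th_0,p_1}\times(B_0,B_1)_{\th_1,p_2}$ with $1/p_1+1/p_2\ge 1$. Real interpolation of mixed-norm spaces with parameter $p_i$ produces an $\ell^{p_i}$ (or $L^{2r,p_i}$) structure in the time variable. So these spaces contain $L^{2r}_tL^r_x$ only if $p_i\ge 2r$, which contradicts $1/p_1+1/p_2\ge1$ since $2r>2$.
\end{itemize}
Keel--Tao's argument closes only because their endpoint time exponent is exactly $2$. Raising the target parameter to $\rho=\I$ makes the summation map unbounded. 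Applying Bourgain's trick directly to $\sum_k W_1\mathcal T_kW_2$ gives at best restricted weak type, i.e.\ Lorentz inputs with second index $1$. Its dual form has an $L^{p,\I}_tL^{q,\I}_x$-type norm on the left, strictly weaker than \eqref{eq:endpoint}.

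So your closing sentence names what is needed, but no available mechanism delivers it; this is exactly where the logarithmic divergence seen by Frank--Lewin--Lieb--Seiringer reappears. Nor can the point $A$ be reached by interpolating strong estimates, since it sits at the kink of $\min(p,2q/(q+1))$ along the scaling line.

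The paper never sums over dyadic values of $|t-\ta|$. Instead, for a dyadic step function $V=\sum_k 2^k\II_{E_k}$, it:
\begin{itemize}
\item counts eigenvalues of $fUU^*f$ directly, using a single time cutoff $\ell(k,k')=\ep\lm B^{-2}2^{-(k+k')/2}2^{|k-k'|/4}$ adapted both to the pair of levels and to the threshold $\lm$;
\item bounds the far part by one dispersive tail in Hilbert--Schmidt norm, $\|\SF_{\ep,\lm}\|_{\FS^2}^2\ls(\ep\lm)^{2-r}\|V\|^r_{L^rL^{r/2}}$;
\item bounds the near part by the energy estimate localized to spectral bands $P_{m,\lm}$;
\item chooses $\lm_*$ almost maximizing $\ze^rN_{0,\ze}$, which forces $N_{m,\lm_*}\le 2^{1-rm}N_{0,\lm_*}$.
\end{itemize}
This rank decay makes the near trace at most $\frac12\lm_*N_{0,\lm_*}$, so it is absorbed, leaving $N_{0,\lm_*}\ls\lm_*^{-2}\|\SF_{\ep,\lm_*}\|_{\FS^2}^2$. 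The $\lm$-dependent cutoff together with this spectral absorption is the idea your plan is missing. A scale-by-scale decomposition that is uniform in $\lm$ is precisely what generates the logarithm.
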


\begin{remark}
	In the estimate \eqref{eq:weak main}, we regard $\int_I U(t)^* V(t) U(t)dt$ as a linear operator on $H$ by
	\[
	H \ni \ph \mapsto \int_I U(t)^* \Big[V(t) (U(t)\ph)\Big] dt \in H.
	\]
\end{remark}

Theorem \ref{thm:weak} gives us estimates for a wide range of exponents.
To understand it visually, we introduce Figure \ref{fig:exponents}, which corresponds to \cite[Fig.\,1]{BezHongLeeNakamuraSawano2019}.

\def\sigmaval{1.75}
\def\Dleftshift{0.125}

\pgfmathsetmacro{\Dx}{(\sigmaval-1)/\sigmaval-\Dleftshift}
\pgfmathsetmacro{\Ay}{2*\sigmaval/(2*\sigmaval+1)}
\pgfmathsetmacro{\AlongDB}{1-\Ay}
\pgfmathsetmacro{\Ax}{\Dx+\AlongDB*(1-\Dx)}

\begin{figure}[htbp]
	\centering
\begin{tikzpicture}[
	scale=0.8,
	transform shape,
	x=8cm,
	y=8cm,
	draw=black,
	text=black,
	line cap=round,
	line join=round,
	every node/.style={font=\normalsize},
	frame/.style={black,line width=0.85pt},
	boundary/.style={black,line width=1.0pt},
	guide/.style={black,densely dotted,line width=0.65pt},
	point/.style={circle,fill=black,draw=black,inner sep=1.45pt}
	]
	\coordinate (O) at (0,0);
	\coordinate (Q) at (1,0);
	\coordinate (P) at (0,1);
	\coordinate (C) at (1,1);
	
	\coordinate (D) at (\Dx,1);
	\coordinate (A) at (\Ax,\Ay);
	\coordinate (B) at (1,0);
	
	% F is defined as the exact TikZ midpoint of BD.
	\coordinate (F) at ($(D)!0.5!(B)$);
	
	% Feet of the perpendiculars on the 1/q- and 1/p-axes.
	\coordinate (Dq) at (D |- O);
	\coordinate (Aq) at (A |- O);
	\coordinate (Ap) at (A -| O);
	\coordinate (Fq) at (F |- O);
	\coordinate (Fp) at (F -| O);
	
	% Square frame and coordinate axes.
	\draw[frame] (O) rectangle (C);
	\draw[frame,->] (O) -- (1.085,0)
	node[below=4pt] {$1/q$};
	\draw[frame,->] (O) -- (0,1.085)
	node[left=4pt] {$1/p$};
	
	% Boundary segment D--A--F--B.
	\draw[boundary] (D) -- (B);
	
	% Existing projection guides.
	\draw[guide] (Dq) -- (D);
	\draw[guide] (Ap) -- (A) -- (Aq);
	
	% New perpendicular projection guides through F.
	\draw[guide] (Fp) -- (F) -- (Fq);
	
	% Points and their names.
	\node[point] at (D) {};
	\node[point] at (A) {};
	\node[point] at (F) {};
	\node[point] at (B) {};
	
	\node[above left=3pt]  at (D) {$D$};
	\node[above right=3pt] at (A) {$A$};
	\node[above right=3pt] at (F) {$F$};
	\node[above right=3pt] at (B) {$B$};
	
	% Axis labels.  Staggering the horizontal labels prevents overlap.
	\node[below=7pt,fill=white,inner sep=1.5pt]
	at (Dq) {$\displaystyle\frac{\sigma-1}{\sigma}$};
	\node[below=7pt,fill=white,inner sep=1.5pt]
	at (Aq) {$\displaystyle\frac{2\sigma-1}{2\sigma+1}$};
	\node[below=7pt,fill=white,inner sep=1.5pt]
	at (Fq) {$\displaystyle\frac{\sigma}{\sigma+1}$};
	
	\node[left=8pt,fill=white,inner sep=1.5pt]
	at (Ap) {$\displaystyle\frac{2\sigma}{2\sigma+1}$};
	\node[left=8pt,fill=white,inner sep=1.5pt]
	at (Fp) {$\displaystyle\frac{\sigma}{\sigma+1}$};
	
	\node[left=5pt]  at (P) {$1$};
	\node[below=5pt] at (Q) {$1$};
\end{tikzpicture}
\caption{}
\label{fig:exponents}
\end{figure}

According to the notation in \cite{BezHongLeeNakamuraSawano2019}, we define
\begin{align}
	&[X,Y] := \Ck{(1-\th)X + \th Y : \th \in [0,1]},\quad
	(X,Y) := \Ck{(1-\th)X + \th Y : \th \in (0,1)}, \\
	&(X,Y] := \Ck{(1-\th)X +\th Y: \th \in (0,1]}, \quad 
	[X,Y) := \Ck{(1-\th)X +\th Y: \th \in [0,1)}.
\end{align}
Theorem \ref{thm:weak} gives the estimate at $A$ in Figure \ref{fig:exponents}.
However, at $B$, Assumption \ref{ass:energy} and the triangle inequality imply
\begin{equation}\label{eq:trivial}
	\bigg\|\sum_j \nu_j |U(t)\ph_j|^2 \bigg\|_{L^\I(I;L^1(X))} \ls \|\nu\|_{\ell^1}.
\end{equation}
Moreover, at $D$, if $\si>1$, by the seminal theorem of Keel and Tao in \cite{KeelTao1998}, we obtain
\[
\||U(t)\ph|^2\|_{L^1_t(I;L^{\frac{\si}{\si-1}}(X))}\ls \|\ph\|_H^2
\]
for all $\ph\in H$. Therefore, by a naive estimate with the triangle inequality, 
\begin{equation}\label{eq:Keel Tao}
	\bigg\|\sum_j \nu_j |U(t)\ph_j|^2 \bigg\|_{L^1(I;L^{\frac{\si}{\si-1}}(X))} \ls  \|\nu\|_{\ell^1}.
\end{equation}
Hence, standard interpolation arguments imply the following corollary.
See Section \ref{subsec:abstract corollary} for a detailed proof.
\begin{corollary}\label{cor:abstract}
Let $p,q,\al\in [1,\I]$ satisfy
\begin{equation}
	\frac{1}{p} + \frac{\si}{q} = \si, \quad \al\le \min\K{p,\frac{2q}{q+1}}.
\end{equation}
Then, if $(1/q,1/p)\in [B,A)$, the orthonormal Strichartz estimate
	\begin{equation}\label{eq:non endpoint}
		\bigg\|\sum_j \nu_j |U(t)\ph_j|^2 \bigg\|_{L^p(I;L^q(X))} \ls \|\nu\|_{\ell^\al}
	\end{equation}
holds for every sequence $\nu= (\nu_j)_j$ and for every orthonormal system $(\ph_j)_j$ in $H$.
Moreover, if $\si>1$ and if $(1/q,1/p) \in (A,D]$, we have \eqref{eq:non endpoint}.
\end{corollary}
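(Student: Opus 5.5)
The plan is to interpolate the three known estimates: the endpoint estimate at $A$ from Theorem \ref{thm:weak}, the trivial estimate \eqref{eq:trivial} at $B$, and, when $\si>1$, the Keel--Tao estimate \eqref{eq:Keel Tao} at $D$. All of them are restricted or strong-type bounds for the map $\nu\mapsto \sum_j \nu_j|U(t)\ph_j|^2$ with a fixed orthonormal system. It is cleaner to work with the dual Schatten formulation. For fixed $(\ph_j)_j$, define the density map $\gamma\mapsto \rho_\gamma(t,x):=\sum_j\nu_j|U(t)\ph_j|^2$ for $\gamma=\sum_j\nu_j|\ph_j\rangle\langle\ph_j|$. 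More generally, set $\rho_\gamma(t)=\mathrm{diag}(U(t)\gamma U(t)^*)$, which is defined by duality against $V$ through $\Tr(\gamma\int_I U(t)^*V(t)U(t)\,dt)$. This map is linear in $\gamma$. Then \eqref{eq:non endpoint} for all orthonormal systems is equivalent, for self-adjoint diagonal $\gamma$, to $\|\rho_\gamma\|_{L^pL^q}\ls\|\gamma\|_{\FS^\al}$. Dually, it is equivalent to $\|\int_I U^*VU\,dt\|_{\FS^{\al'}}\ls\|V\|_{L^{p'}L^{q'}}$. So I would establish the operator-valued bounds for $T:V\mapsto\int_I U(t)^*V(t)U(t)\,dt$ and interpolate them.

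\textbf{Step 1 (endpoints for $T$).} At $A$, Theorem \ref{thm:weak} gives $T:L^r(I;L^{r/2}(X))\to\FS^{r,\I}$. At $B$, \eqref{eq:trivial} dualizes to $T:L^1(I;L^\I(X))\to\FS^\I$, i.e.\ the operator norm bound $\|T V\|\le B^2\|V\|_{L^1L^\I}$, which is immediate. At $D$, \eqref{eq:Keel Tao} dualizes to $T:L^\I(I;L^{\si}(X))\to\FS^\I$.

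\textbf{Step 2 (interpolation on $[B,A)$).} Apply real interpolation with parameter $\th\in(0,1)$ between the $B$ and $A$ bounds for $T$. Mixed-norm Lebesgue spaces interpolate by the real method up to Lorentz modifications. More robustly, I would use complex interpolation with an analytic family $V_z=|V|^{a(z)}\mathrm{sgn}V\cdot\|V(t)\|^{b(z)}$, adjusted in both variables. However, the $A$ endpoint is only weak-type in Schatten ($\FS^{r,\I}$), which is not directly compatible with complex interpolation.

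I would therefore use real interpolation in two steps. First, interpolate between $A$ and a nearby point. Alternatively, and more simply, note that $(\FS^\I,\FS^{r,\I})_{\th,s}=\FS^{\al',s}$ with $1/\al'=\th/r$. On the domain side, $(L^1L^\I,L^rL^{r/2})_{\th,s}$ contains $L^{p'}L^{q'}$ when $s\ge p'$, by the embedding $L^{p'}(L^{q'})\subset (L^{p'},\cdot)$. For this I would use the Lions--Peetre result on vector-valued $L^p$ together with the fact that $L^{q'}$ sits inside the real interpolant of $L^\I,L^{r/2}$ with the appropriate second index. Choosing $s=p'$ gives $T:L^{p'}L^{q'}\to\FS^{\al',p'}$. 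Since $p'\ge \al'$ whenever $\al\ge p$ in the dual convention, we have $\FS^{\al',p'}\subset \FS^{\al'}$ exactly when $p'\le\al'$, i.e.\ $\al\le p$. Between $B$ and $A$, the line $1/p+\si/q=\si$ gives $\al'$ computed from $\th$, and one checks that $\al_*:=2q/(q+1)$ coincides with the interpolated exponent while $\al_*\le p$ on $[B,A)$. Thus the Lorentz loss is absorbed, and the strict inequality $\th<1$ is what makes this work. Smaller $\al$ follows trivially from $\ell^\al\subset\ell^{\al_*}$.

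\textbf{Step 3 (interpolation on $(A,D]$, $\si>1$).} The same real-interpolation argument is applied between the $A$ and $D$ bounds. There the interpolated Schatten exponent exceeds $p$, so the constraint becomes $\al\le p$, again absorbing the secondary index. The main obstacle I expect is the careful identification of real interpolation spaces for mixed norms $L^{p}_tL^{q}_x$ when both exponents vary. I would handle it by the standard trick: use the known bilinear/vector-valued real interpolation $(L^{p_0}(A_0),L^{p_1}(A_1))_{\th,p}=L^p((A_0,A_1)_{\th,p})$ for $1/p=(1-\th)/p_0+\th/p_1$. Then embed $L^{q'}\hookrightarrow (L^{q_0'},L^{q_1'})_{\th,p'}$, which holds when $p'\ge q'$; this should be verified on each segment, and if it fails, one swaps the roles via duality.
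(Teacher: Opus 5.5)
Your Step 3 (the segment $(A,D]$) is correct and is the paper's argument in dual form: there $1/p>\frac{2\si}{2\si+1}>1/q$, so $p<q$ and the secondary Lorentz index can be dropped.

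The gap is in Step 2. With second index $s=p'$, the Lions--Peetre identification gives $(L^1L^\I,L^rL^{r/2})_{\th,p'}=L^{p'}(I;L^{q',p'}(X))$. So what you actually obtain is $T:L^{p'}(I;L^{q',p'}(X))\to\FS^{\al',p'}$. To pass to $L^{p'}(I;L^{q'}(X))$ you need $L^{q'}\hookrightarrow L^{q',p'}$, i.e.\ $q'\le p'$, i.e.\ $p\le q$. In your parametrization ($\th=0$ at $B$, $\th=1$ at $A$) one has $1/p'=1-\th+\th/r$ and $1/q'=2\th/r$. So the condition holds only for $\th\ge r/(r+1)$, i.e.\ on $[F,A)$, where $F=(\frac{\si}{\si+1},\frac{\si}{\si+1})$ is the point of the line with $p=q$.

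On $(B,F)$ one has $p>q$ (near $B$, $p\to\I$ while $q\to1$). There $L^{q',p'}$ is strictly smaller than $L^{q'}$, and the interpolated bound does not reach $L^{p'}(I;L^{q'}(X))$. Your fallback, ``swap the roles via duality'', does not repair this. In the primal formulation the same interpolation lands in $L^p(I;L^{q,p}(X))$, and $L^{q,p}\hookrightarrow L^q$ requires exactly the same condition $p\le q$; that condition is invariant under dualization. As written, nothing in the proposal proves the estimate on $(B,F)$.

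The missing ingredient is a second interpolation step, which is how the paper proceeds. The real interpolation does give a genuine strong-type bound at one fixed interior point $G=(1/q_0,1/p_0)\in(F,A)$. Namely, $\ell^{\be_0}\to L^{p_0}(I;L^{q_0}(X))$ with $\be_0=2q_0/(q_0+1)$, or dually $T:L^{p_0'}L^{q_0'}\to\FS^{\be_0'}$. Now both $G$ and $B$ are strong-type estimates. So the obstruction you correctly identified for complex interpolation, the weak-type class $\FS^{r,\I}$ at $A$, is gone, and complex interpolation between $G$ and $B$ covers $(B,G)$. Because $1/\be=(1+1/q)/2$ is affine in $1/q$, the interpolated Schatten exponent is exactly $2q/(q+1)$. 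Together with $(F,A)$ and the trivial bound at $B$, this gives all of $[B,A)$. Your aside ``first, interpolate between $A$ and a nearby point'' points in this direction, but the proposal then replaces it with the one-step argument, which fails on $(B,F)$.
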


\begin{remark}[Comparison with related results]\label{rmk:comparison}
	In \cite{Nguyen2024}, Nguyen studied a general class of microlocalized semiclassical propagators, and established an abstract result to obtain the orthonormal Strichartz estimates under certain general assumptions.
	Moreover, his analysis describes carefully how a variety of bounds depend on the semiclassical parameter.
	The setup of the problem studied in \cite{Nguyen2024} is slightly different from ours; however, this is still an important work in our context because, to the best of the authors' knowledge, it is the first general result to obtain the orthonormal Strichartz estimates.
		
	Later, in \cite{Hoshiya2025c}, Hoshiya proved an abstract orthonormal Strichartz theorem for spectrally localized propagators associated with a self-adjoint Hamiltonian.
	In terms of Corollary \ref{cor:abstract}, he imposed a structural assumption $U(t)=e^{-it\SH}\phi(\SH)$, where $\SH$ is a Hamiltonian.
	On the segment $[B,A)$ in Figure \ref{fig:exponents}, his result agrees with Corollary \ref{cor:abstract}.
	On the segment $[A,D]$, however, his theorem gives Schatten exponents strictly below the critical value $p$.
	In contrast, Theorem \ref{thm:weak} gives the Lorentz endpoint at $A$, and Corollary \ref{cor:abstract} reaches the critical exponent on $(A,D]$.
	Moreover, the assumptions of the present result are also more flexible, since we do not impose any structural assumptions.
	
	In \cite{FengMondalSongWu2026}, Feng, Mondal, Song, and Wu developed related theory on abstract measure spaces.
	Their non-endpoint estimates are close to estimates in \cite{Hoshiya2025c} and agree with the corresponding part of Corollary \ref{cor:abstract}.
	However, their result does not contain the Lorentz endpoint at $A$ in Figure \ref{fig:exponents} or the critical conclusion on $(A,D]$. 
		
	In summary, Nguyen provided a semiclassical model, while Hoshiya and Feng--Mondal--Song--Wu developed abstract non-endpoint theories.
	Theorem \ref{thm:weak} and Corollary \ref{cor:abstract} add the missing endpoint information under weaker structural assumptions.
\end{remark}

\subsection{Consequences}
Theorem \ref{thm:weak} and Corollary \ref{cor:abstract} yield many results, but we only give two crucial consequences and one further application.

The first crucial consequence is about the free Schr\"odinger equation.
It is very well known that the propagator $e^{it\De}$ satisfies
\begin{align}
	&\|e^{it\De}\|_{L^2(\R^d)\to L^2(\R^d)} = 1, \\
	&\|e^{it\De}(e^{i\ta\De})^*\|_{L^1(\R^d) \to L^\I(\R^d)} \ls \frac{1}{|t-\ta|^{d/2}}.
\end{align}
Thus, applying Theorem \ref{thm:weak} and Corollary \ref{cor:abstract}, we obtain the following.
\begin{corollary}\label{cor:Schr}
	Let $d\ge 2$. Assume that $p,q,\al\in [1,\I]$ satisfy $2/p+d/q=d$.
	Then, the following statements hold.
	\begin{enumerate}[$(i)$]
		\item If $1\le q < (d+1)/(d-1)$ and $1\le \al \le 2q/(q+1)$, then it holds that
		\begin{equation}\label{eq:ONS Schr}
			\bigg\|\sum_j \nu_j |e^{it\De}\ph_j|^2\bigg\|_{L^p_t(\R;L^q_x(\R^d))} \ls \|\nu\|_{\ell^\al}
		\end{equation}
		for every sequence $\nu = (\nu_j)_j$ and for every orthonormal system $(\ph_j)_j$ in $L^2_x(\R^d)$.
		\item If $q=(d+1)/(d-1)$ and $1\le \al \le p$, then it holds that
		\begin{equation}
			\bigg\|\sum_j \nu_j |e^{it\De}\ph_j|^2\bigg\|_{L^p_t(\R;L^q_x(\R^d))} \ls \|\nu\|_{\ell^{\al,1}}.
		\end{equation}
		\item If $d\ge 3$, $(d+1)/(d-1)<q\le d/(d-2)$, and $1\le \al \le p$, then the estimate \eqref{eq:ONS Schr} holds.
	\end{enumerate}
\end{corollary}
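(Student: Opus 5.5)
This is a direct specialization of Theorem \ref{thm:weak} and Corollary \ref{cor:abstract} to $U(t)=e^{it\De}$, $H=L^2(\R^d)$, $X=\R^d$ with Lebesgue measure, and $I=\R$. The plan is to verify the hypotheses and then translate the exponents.

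\textbf{Hypotheses.} Strong continuity of $t\mapsto e^{it\De}\ph$ is standard. The two displayed estimates give Assumption \ref{ass:energy} with $B=1$, and Assumption \ref{ass:dispersive} with $\si=d/2$. The requirement $\si>1/2$ is exactly $d\ge 2$; for $d=1$ we would have $\si=1/2$, consistent with the known failure there.

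\textbf{Exponent translation.} With $\si=d/2$, the line $1/p+\si/q=\si$ becomes $2/p+d/q=d$. The distinguished points in Figure \ref{fig:exponents} then sit at
\[
A:\ q=\frac{d+1}{d-1},\ p=\frac{d+1}{d},\qquad B:\ q=1,\ p=\I,\qquad D:\ q=\frac{d}{d-2},\ p=1,
\]
where $D$ is relevant only when $\si>1$, i.e.\ $d\ge3$. For $(1/q,1/p)\in[B,A)$, i.e.\ $1\le q<(d+1)/(d-1)$, one checks that $2q/(q+1)\le p$. Indeed, on this line $p=2q/(d(q-1))$, so the inequality reduces to $d(q-1)\le q+1$, i.e.\ $q\le (d+1)/(d-1)$. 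Hence $\min(p,2q/(q+1))=2q/(q+1)$, and Corollary \ref{cor:abstract} gives (i). For (iii), the condition $(d+1)/(d-1)<q\le d/(d-2)$ is exactly $(1/q,1/p)\in(A,D]$. By the reverse of the same computation, $\min=p$ there, so Corollary \ref{cor:abstract} gives (iii).

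\textbf{Point (ii).} This is Theorem \ref{thm:weak} itself, with $p=(2\si+1)/(2\si)=(d+1)/d$. It gives the bound with $\|\nu\|_{\ell^{p,1}}$. For $\al\le p$, the embedding $\ell^{\al,1}\hookrightarrow\ell^{p,1}$ holds. To see it, note
\[
\sum_j j^{1/p-1}\nu_j^*\le \sum_j j^{1/\al-1}\nu_j^*,
\]
since $1/p\le 1/\al$ makes each term on the left at most the corresponding term on the right. This finishes (ii).

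No step is a genuine obstacle; the only points needing care are the arithmetic identifying the segments and confirming which term attains the minimum in Corollary \ref{cor:abstract}.
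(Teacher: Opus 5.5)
Your proposal is correct and takes essentially the same approach as the paper, which obtains this corollary in one line by applying Theorem \ref{thm:weak} and Corollary \ref{cor:abstract} with $\sigma=d/2$ and $B=1$. You additionally supply details the paper leaves implicit: locating $A$, $B$, $D$, checking which term attains $\min(p,2q/(q+1))$ on each segment, and the elementary embedding $\ell^{\alpha,1}\hookrightarrow\ell^{p,1}$ for $\alpha\le p$ needed in $(ii)$.
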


	Corollary \ref{cor:Schr} $(ii)$ is an affirmative answer to \cite[Conjecture 1.3]{BezHongLeeNakamuraSawano2019}.
	Moreover, Corollary \ref{cor:Schr} $(iii)$ is an improvement of the known results.
	When $d\ge 3$, $(d+1)/(d-1)<q< d/(d-2)$, and $1\le \al <p$, then the estimate \eqref{eq:ONS Schr} is a direct consequence of the results of Keel–Tao and Frank–Sabin \cite{KeelTao1998, FrankSabinRestriction2017}. One can find the explicit statement in \cite{BezHongLeeNakamuraSawano2019}. However, to the best of the authors' knowledge, the critical line $\al = p$ was not known before.  

The second crucial consequence is about the Strichartz estimates for free transport equations.
By combining Theorem \ref{thm:weak} and \cite[Proposition 5.1]{BezHongLeeNakamuraSawano2019}, we obtain an affirmative answer to the conjecture in \cite[Section 3]{BennettBezGutierrezLee2014}, which was stated in a dual form.
\begin{corollary}\label{cor:transport}
	Let $d\ge 2$. Then, it holds that
	\begin{equation}\label{eq:transport}
		\bigg\|\int_{\R^d} f_0(x-tv,v)dv\bigg\|_{L^{\frac{d+1}{d}}_t(\R;L^{\frac{d+1}{d-1}}_x(\R^d))} \ls \|f_0\|_{L^{\frac{d+1}{d},1}_{x,v}(\R^{2d})}
	\end{equation}
	for all $f_0 \in L^{\frac{d+1}{d},1}_{x,v}(\R^{2d})$, where $L^{q,r}$ is the standard Lorentz space.
\end{corollary}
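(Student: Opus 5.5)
We deduce Corollary \ref{cor:transport} from the Schr\"odinger endpoint estimate in Corollary \ref{cor:Schr} $(ii)$ with $\al=p=\frac{d+1}{d}$ and $q=\frac{d+1}{d-1}$. The tool is the semiclassical limiting argument behind \cite[Proposition 5.1]{BezHongLeeNakamuraSawano2019}, adapted so that it transfers a Schatten--Lorentz bound $\FS^{p,1}$ into a Lorentz bound $L^{p,1}_{x,v}$. We proceed in three steps: reduce to indicator functions, quantize them to operators with controlled Schatten--Lorentz norm, and pass to the semiclassical limit.

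\textbf{Step 1: reduction to indicator functions.} Since $|\int f_0(x-tv,v)\,dv|\le \int |f_0|(x-tv,v)\,dv$, we may assume $f_0\ge 0$. The target space $L^p_tL^q_x$ is a Banach space, since $p,q>1$. The Lorentz space $L^{p,1}$ admits an atomic decomposition: every $f_0\ge0$ can be written as $\sum_k c_k\II_{E_k}$ with $\sum_k c_k|E_k|^{1/p}\ls \|f_0\|_{L^{p,1}}$, using level sets at dyadic heights. By linearity of $f_0\mapsto \rho_f$ and the triangle inequality, it therefore suffices to prove
\[
\Big\|\int_{\R^d}\II_E(x-tv,v)\,dv\Big\|_{L^p_tL^q_x}\ls |E|^{1/p}
\]
for bounded measurable $E\subset\R^{2d}$. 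By monotone approximation we may further take $E$ to be, say, a finite union of boxes.

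\textbf{Step 2: quantization.} For $h>0$ we let $\ga_h$ be the anti-Wick (coherent state) quantization of $\II_E$. This operator satisfies $0\le \ga_h\le 1$ and $\Tr\ga_h=(2\pi h)^{-d}|E|$. If the eigenvalues lie in $[0,1]$ and sum to $T$, then the quantity $\sum_j j^{1/p-1}\nu_j^*$ is maximized by taking $\nu^*$ equal to $1$ on the first $\lfloor T\rfloor+1$ indices. Hence
\[
\|\ga_h\|_{\FS^{p,1}}\ls (1+T)^{1/p}\ls h^{-d/p}|E|^{1/p}
\]
for $h$ small. We write $\ga_h=\sum_j\nu_j|\ph_j\rangle\langle\ph_j|$ and apply Corollary \ref{cor:Schr} $(ii)$ to the orthonormal system $(\ph_j)_j$.

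\textbf{Step 3: rescaling and the limit.} The semiclassical evolution is $e^{iht\De}$, obtained from $e^{is\De}$ via $s=ht$. By Egorov/coherent-state propagation for the free flow, which is exact up to Gaussian smoothing, the rescaled density $(2\pi h)^d\rho_{e^{iht\De}\ga_he^{-iht\De}}(x)$ converges weakly, as $h\to0$, to $\int\II_E(x-2tv,v)\,dv$. The factor $2$ is removed by rescaling $v$. The next task is to track the powers of $h$. The time change contributes $h^{-1/p}$ in $L^p_t$. The admissibility relation $2/p+d/q=d$ makes the exponents match exactly: the factor $h^{d}\cdot h^{-1/p}\cdot h^{-d/p}$ combines with the scaling inherent in the Schr\"odinger estimate, and this is precisely the bookkeeping of \cite[Proposition 5.1]{BezHongLeeNakamuraSawano2019}. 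The net result is a bound uniform in $h$,
\[
\|(2\pi h)^d\rho_h\|_{L^p_tL^q_x}\ls |E|^{1/p}.
\]
Weak lower semicontinuity of the norm then yields the claim.

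The main obstacle is Step 3. We must verify that the argument of \cite[Proposition 5.1]{BezHongLeeNakamuraSawano2019}, originally phrased with $\ell^\al$ norms, goes through with $\FS^{p,1}$ in place of $\FS^\al$. This amounts to confirming that only the estimate $\|\ga_h\|_{\FS^{p,1}}\ls h^{-d/p}|E|^{1/p}$ enters, and Step 2 provides exactly this estimate.
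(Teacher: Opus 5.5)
Your proposal is correct and takes the same route as the paper. The paper obtains Corollary~\ref{cor:transport} by feeding the endpoint estimate of Theorem~\ref{thm:weak} (i.e.\ Corollary~\ref{cor:Schr}~$(ii)$ with $\alpha=p=\frac{d+1}{d}$) into the semiclassical transfer principle \cite[Proposition 5.1]{BezHongLeeNakamuraSawano2019}, applied directly at the Lorentz level. Your Steps 1--3 are a sound self-contained version of that transfer, and they already resolve the worry you flag. With Gaussian coherent states one has exactly $(2\pi h)^d\rho_{\gamma_h(t)}=\rho_{\II_E}(2t,\cdot)*K_{h,t}$ for a Gaussian approximate identity $K_{h,t}$, and the powers of $h$ cancel simply because $d-\frac1p-\frac dp=0$ at $p=\frac{d+1}{d}$. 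So the only input needed from the quantization is $\|\gamma_h\|_{\FS^{p,1}}\lesssim h^{-d/p}|E|^{1/p}$, which your Step 2 supplies.
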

\begin{remark}
Note that $f(t,x,v)=f_0(x-tv,v)$ is a solution to the free transport equation
	\[
	\begin{dcases}
		\pl_t f + v\cdot \na_x f = 0, \quad f:\R_t \times \R^d_x \times \R^d_v \to \R, \\
		f(0)=f_0.
	\end{dcases}
	\]
	Moreover, the quantity
	\[
	\int_{\R^d} f_0(x-tv,v)dv =: \rho_f (t,x)
	\]
	is called the \textit{density}, which plays an essential role in kinetic theory.
\end{remark}

Finally, as a further application, we give a refinement of the classical Strichartz estimates for a single function.
Let $\dot B^s_{p,q}$ be the standard homogeneous Besov space.

\begin{corollary}[Refinement of the classical Strichartz estimate]\label{cor:single-function}
	Let $d\ge 3$.
	Then the following statements hold.
	\begin{enumerate}[$(i)$]
		\item Let $p,q\in(1,\I)$ satisfy
		\begin{equation}\label{eq:single-admissible}
			\frac{2}{2p}+\frac{d}{2q}=\frac{d}{2},
			\qquad
			\frac{2(d+1)}{d-1}<2q<\frac{2d}{d-2}.
		\end{equation}
		Then, it holds that
		\begin{equation}\label{eq:single-upper-lorentz}
			\|e^{it\De}\ph\|_{L^{2p}_t(\R;L^{2q}_x(\R^d))}
			\ls
			\|\ph\|_{\dot B^0_{2,2p}(\R^d)}.
		\end{equation}
		
		\item The $\dot B^0_{2,2p}$-norm in \eqref{eq:single-upper-lorentz} is optimal.
		More precisely, let $(p,q)$ satisfy \eqref{eq:single-admissible}, and let $\ka \in[1,\I)$.  If
		\begin{equation}\label{eq:single-hypothetical}
			\|e^{it\De}\ph\|_{L^{2p}_t(\R;L^{2q}_x(\R^d))}
			\ls \|\ph\|_{\dot B^0_{2,\ka}(\R^d)}
		\end{equation}
		holds for every $\ph\in\mathcal S(\R^d)$ whose Fourier transform is
		supported away from the origin, then we need $\ka \le 2p$.
	\end{enumerate}
\end{corollary}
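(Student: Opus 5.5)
\textbf{Part (i).} The plan is to combine a Littlewood--Paley square function with the critical-line orthonormal estimate of Corollary \ref{cor:Schr}~$(iii)$, taking $\al=p$.

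Let $(P_k)_{k\in\Z}$ be smooth homogeneous Littlewood--Paley projections. Since $1<2q<\I$ and $e^{it\De}$ commutes with $P_k$, the Littlewood--Paley inequality in $L^{2q}_x$, applied for each fixed $t$, gives
\[
\|e^{it\De}\ph\|_{L^{2p}_tL^{2q}_x}\ls \Big\|\Big(\sum_k |e^{it\De}P_k\ph|^2\Big)^{1/2}\Big\|_{L^{2p}_tL^{2q}_x}
=\Big\|\sum_k |e^{it\De}P_k\ph|^2\Big\|_{L^p_tL^q_x}^{1/2}.
\]
We split $\Z$ into three residue classes modulo $3$. Within one class the Fourier supports of the $P_k\ph$ are pairwise disjoint. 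Hence the functions $g_k:=P_k\ph/\|P_k\ph\|_{L^2}$ (over $k$ with $P_k\ph\ne0$) form an orthonormal system in $L^2(\R^d)$.

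Writing $|e^{it\De}P_k\ph|^2=\nu_k|e^{it\De}g_k|^2$ with $\nu_k=\|P_k\ph\|_{L^2}^2$, we apply Corollary \ref{cor:Schr}~$(iii)$ with $\al=p$. This is admissible because \eqref{eq:single-admissible} says exactly that $2/p+d/q=d$ and $(d+1)/(d-1)<q<d/(d-2)$. It yields
\[
\Big\|\sum_{k\equiv i\ (3)}\nu_k|e^{it\De}g_k|^2\Big\|_{L^p_tL^q_x}\ls \Big(\sum_k\|P_k\ph\|_{L^2}^{2p}\Big)^{1/p}=\|\ph\|_{\dot B^0_{2,2p}}^2 .
\]
Summing the three residue classes and taking square roots gives \eqref{eq:single-upper-lorentz}. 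This step is routine. The real input is the critical line $\al=p$, which is exactly what Corollary \ref{cor:Schr}~$(iii)$ supplies; with $\al<p$ one would only get the weaker $\dot B^0_{2,2\al}$ norm.

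\textbf{Part (ii).} We construct a family that saturates $\ell^{2p}$ summation over frequency scales. Fix $\ps\in\CS(\R^d)$ with $\widehat\ps$ supported in $\{1/2<|\xi|<2\}$ and $\|\ps\|_{L^2}=1$. For $m=1,\dots,N$ set
\[
\ps_m(x)=2^{5md}\ps(2^{10m}x),
\]
which is $L^2$-normalized with Fourier support in the annulus at scale $2^{10m}$. These supports are well separated, so each $\ps_m$ meets only $O(1)$ Littlewood--Paley blocks, and no block is shared by two different $m$. Define
\[
\ph_N=\sum_{m=1}^N e^{-iT_m\De}\ps_m
\]
with times $T_1,\dots,T_N$ to be chosen. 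The Fourier modulus of each piece does not depend on $T_m$, so the pieces keep their frequency localization, and $\|\ph_N\|_{\dot B^0_{2,\ka}}\approx N^{1/\ka}$ uniformly in the $T_m$.

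On the other side, the admissibility relation makes $\|e^{it\De}\ps_m\|_{L^{2p}_tL^{2q}_x}=:c_0>0$ independent of $m$, by parabolic scaling. The main technical step is an asymptotic decoupling: if $|T_m-T_{m'}|\to\I$, then
\[
\Big\|\sum_m e^{i(t-T_m)\De}\ps_m\Big\|_{L^{2p}_tL^{2q}_x}^{2p}\to \sum_m \|e^{it\De}\ps_m\|_{L^{2p}_tL^{2q}_x}^{2p}=Nc_0^{2p}.
\]
We prove this by truncating each piece to a large time window $|t-T_m|\le R$, which captures all but $\ep$ of its norm since $e^{it\De}\ps_m\in L^{2p}_tL^{2q}_x$. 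We then choose the $T_m$ so far apart that these windows are disjoint, and control the cross contributions of the tails using the elementary inequality for $|\sum a_m|^{2p}$.

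Consequently, the left-hand side of \eqref{eq:single-hypothetical} is $\gtrsim N^{1/(2p)}$ while the right-hand side is $\approx N^{1/\ka}$. Letting $N\to\I$ forces $1/(2p)\le1/\ka$, that is, $\ka\le 2p$. The functions $\ph_N$ are Schwartz with Fourier support away from the origin, as required. I expect the decoupling limit to be the only point needing care, namely making the truncation-and-separation argument uniform in $N$. Since $N$ is fixed before choosing the $T_m$, this should be harmless.
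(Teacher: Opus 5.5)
Your proposal is correct. Part (i) is exactly the paper's argument: the paper only says it follows from the standard Littlewood--Paley argument, namely the square-function reduction plus Corollary \ref{cor:Schr}~(iii) with $\alpha=p$ that you spell out.

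For part (ii) you use the same kind of example as the paper: $L^2$-normalized rescaled bumps at widely separated dyadic frequencies, each time-translated so that its Strichartz mass sits in its own time window. Where you differ is in how you get the lower bound.

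\textbf{The paper's lower bound.} The paper uses the frequency separation a second time. A smooth Fourier multiplier isolating the annulus of $\psi_{k_j}$ is bounded on $L^{2q}$ uniformly in $j$. This gives the pointwise-in-time bound $\|e^{it\Delta}\Psi_M\|_{L^{2q}}\gtrsim |c_j|\,\|e^{i(t-t_j)\Delta}\psi_{k_j}\|_{L^{2q}}$ for every $t$. It then only needs the compact intervals $t_j+2^{-2k_j}I_0$ to be disjoint. There are no tail terms and no $\epsilon$-argument, and arbitrary coefficients are handled directly.

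\textbf{Your lower bound.} You use frequency separation only on the Besov side. The lower bound comes purely from time separation:
\begin{enumerate}
\item[1.] Truncate each piece to a window $W_m$ capturing all but $\epsilon$ of its full-line norm $c_0$.
\item[2.] Separate the windows.
\item[3.] On each $W_m$, absorb the $N-1$ tails by the triangle inequality in $L^{2p}_t(W_m;L^{2q}_x)$, with $\epsilon=\epsilon(N)$.
\end{enumerate}
This is legitimate because, as you observe, $N$ is fixed before the $T_m$ are chosen. Only the final constant, e.g.\ $c_0/2$, must be uniform in $N$. You also only need this lower bound, not the full decoupling limit.

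\textbf{What each buys.} Your route avoids multiplier bounds in the lower bound but requires the tail bookkeeping. The paper's route is shorter and yields the slightly more general $\|c\|_{\ell^{2p}}\lesssim\|c\|_{\ell^\kappa}$. Your choice of all $c_j=1$ already suffices to force $\kappa\le 2p$.
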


The passage from orthonormal estimates to refinements in Besov spaces is standard; see, for example,
\cite{FrankSabinRestriction2017,BezLeeNakamura2021}.
To the best of the authors' knowledge, the best refinement of the form \eqref{eq:single-hypothetical} in the previous works was $\ka < 2p$.
Corollary \ref{cor:single-function} $(i)$ reaches $\ka=2p$, and Corollary \ref{cor:single-function} $(ii)$ shows that this is optimal.

\subsection{Review of other related works}
Following the pioneering works \cite{FrankLewinLiebSeiringer2014, FrankSabinRestriction2017} discussed earlier, the theory of orthonormal Strichartz estimates has developed substantially.

An important early result was obtained by Chen, Hong, and Pavlovi\'c in \cite{ChenHongPavlovic2017, ChenHongPavlovic2018}.
They proved estimates of the form
\begin{equation}\label{eq:CHP}
	\bigg\||\na_x|^{1/2}\sum_j \nu_j |e^{it\De}\ph_j|^2 \bigg\|_{L^2_t(\R;H^s_x(\R^d))} \ls \|\nu\|_{\ell^2}
\end{equation}
for every orthonormal system $(\ph_j)_j$ in $H^{s_1}(\R^d)$ with appropriate $s,s_1\ge 0$.
Their method is completely different from the arguments in \cite{FrankLewinLiebSeiringer2014, FrankSabinRestriction2017} and is essentially based on the space-time Fourier transform. The estimate exhibits a half-derivative gain, which arises from essentially the same mechanism as bilinear Strichartz estimates. We should emphasize that \eqref{eq:intro-free-orthonormal} does not imply \eqref{eq:CHP} and vice versa.

In a context closer to \cite{FrankLewinLiebSeiringer2014, FrankSabinRestriction2017}, Bez, Hong, Lee, Nakamura, and Sawano considered initial data with regularity in \cite{BezHongLeeNakamuraSawano2019}. More precisely, they proved \eqref{eq:intro-free-orthonormal} for all orthonormal systems $(\ph_j)_j$ in $\dot{H}^s(\R^d)$ for a certain range of $s>0$ and for appropriate exponents $p,q,\al\in[1,\I]$.

In \cite{FrankSabinRestriction2017}, Frank and Sabin also established the orthonormal Strichartz estimates for wave and Klein--Gordon equations in a certain range of exponents. 
In \cite{BezLeeNakamura2021}, Bez, Lee, and Nakamura extended and sharpened these results through weighted oscillatory-integral estimates. They obtained optimal summability exponents in several important cases and also treated fractional Schrödinger equations.
For the wave equation, see also \cite{BezKinoshitaShirakiWave2025, BezLeeNakamuraWave2025}.

Changing the order of the mixed norm led to a different boundary theory.
In \cite{BezLeeNakamura2020}, Bez, Lee, and Nakamura proved sharp maximal-in-time and maximal-in-space estimates for the one-dimensional Schr\"odinger equation. The maximal-in-time estimates apply to Carleson's pointwise convergence problem, while the maximal-in-space estimates give a boundary case of orthonormal Strichartz estimates.
Bez, Kinoshita, and Shiraki subsequently developed this connection in higher-dimensional and positive-regularity settings in  \cite{BezKinoshitaShiraki2024}.
Some important progress on maximal estimates for orthonormal wave evolutions in low dimensions was made in \cite{KinoshitaKoShiraki2026}.

On $\T^d$, the strong dispersion effect in $\R^d$ is not available, and derivative loss becomes a part of the statement. In \cite{Nakamura2020}, Nakamura proved frequency-localized orthonormal Strichartz estimates on $\T^d$ and identified a tradeoff between Sobolev regularity and Schatten gain.

If we consider the linear Schr\"odinger equation with a time-dependent or time-independent potential $V$
\begin{equation}\label{eq:u V}
	i\pl_tu + \De u - Vu = 0,
\end{equation}
then the propagator of \eqref{eq:u V} is no longer a Fourier multiplier, so we need other tools for our analysis.
In \cite{Hadama2025}, the first author considered time-dependent potentials in the context of nonlinear problems by using a purely perturbative method.
In \cite{Hoshiya2025a}, Hoshiya developed Kato's smooth perturbation theory for Schr\"odinger operators with time-independent potentials, including several singular and magnetic potentials.
In \cite{Hoshiya2024}, many equations with potentials were studied by Kato's smooth perturbation theory and microlocal analysis.
As we have already reviewed in Remark \ref{rmk:comparison}, Hoshiya proved a Keel–Tao-type abstract theorem for spectrally localized strongly continuous unitary groups and applied it, among other examples, to repulsive Hamiltonians with logarithmically decaying perturbations \cite{Hoshiya2025c}.
Feng, Mondal, Song, and Wu developed an abstract measure-space theory for non-negative self-adjoint operators in \cite{FengMondalSongWu2026}. 
For a historical review of abstract results, see Remark \ref{rmk:comparison}.

As we briefly mentioned earlier, the orthonormal Strichartz estimates are closely connected with kinetic equations via the semiclassical limit. In \cite{Nguyen2024}, Nguyen established a general result for obtaining the semiclassically rescaled orthonormal Strichartz estimates from the dispersive estimate. In \cite{Hoshiya2025b}, Hoshiya considered the semiclassical limit of the orthonormal Strichartz estimates on scattering manifolds with applications to the Boltzmann equation.

There are several works concerning special operators. 
Orthonormal estimates for Dunkl-type operators were obtained in \cite{MondalSong2025,SenapatiBoggarapuMondal2024}, while the sharpness of the Schatten exponent was studied in \cite{GhoshSwain2024}.
For the Laguerre and special Hermite operators, see \cite{FengSong2024} and \cite{GhoshMondalSwain2024}, respectively.

Finally, we mention other related works.
In \cite{HadamaHong2025}, a new type of bound in terms of the relative entropy was obtained. 
In \cite{HadamaYamamoto2026}, some randomizations were defined and it was shown that they improve estimates in a probabilistic sense.

\section{Key ideas}\label{subsec:strategy}
In this section, we outline our proof strategy, focusing on the key ideas underlying the argument.

First of all, by the duality principle, it is enough to prove the weak-type Schatten estimate \eqref{eq:weak main}.
See Section \ref{subsec:equivalence} for a detailed argument.
After a standard reduction, we may assume that $V$ is a non-negative step function.
By Definition \ref{def:Schatten Lorentz}, it is easy to check that, for any non-negative compact operator $A$ on $K$, 
\begin{equation}
	\|A\|_{\FS^{r,\I}(K)} = \sup_{\lm>0} \lm \K{\rank\II_{[\lm,\I)}(A)}^{1/r}. 
\end{equation}
Hence, the weak-type Schatten estimate \eqref{eq:weak main} is equivalent to the following eigenvalue-counting estimate 
\begin{align}\label{eq:counting estimate I}
	\rank\II_{[\lm,\I)}\K{\int_I U(t)^* V(t) U(t) dt} \le \frac{C \|V\|^r_{L^r(I;L^{r/2}(X))}}{\lm^r}.
\end{align}
Note that if $AA^*$ is compact, then $A$ is also compact, and two operators $AA^*$ and $A^*A$ have the same nonzero eigenvalues, counted with multiplicity.
Hence, by setting $f=\sqrt{V}$, we find that \eqref{eq:counting estimate I} is equivalent to
\begin{align}\label{eq:counting estimate II}
	\rank\II_{[\lm,\I)}(fUU^* f) \le \frac{C \|V\|^{r}_{L^{r}(I;L^{r/2}(X))}}{\lm^r}, 
\end{align}
where $fUU^* f:= f(t)U(t)\int_I d\ta U(\ta)^* f(\ta)$ is a linear operator on $L^2(I\times X)$ defined by
\begin{equation}
	L^2 (I\times X) \ni u(t,x) \mapsto f(t,x)U(t)\Dk{\int_I U(\ta)^* \big(f(\ta)u(\ta)\big)d\ta}(x).
\end{equation}
In the formulation \eqref{eq:counting estimate II}, we can clearly see two time variables $(t,\ta)$, while this two-time structure is hidden in \eqref{eq:counting estimate I}.

Note that we have already assumed that $V$ is a non-negative step function,
but we can also assume $V(t,x)\in 2^\Z \cup \{0\}$ for all $(t,x)\in I\times X$ to prove the bound \eqref{eq:counting estimate I}.
Indeed, for any non-negative step function $V$, there exists another step function $\wt{V}$ such that
\[
\tw\wt{V}(t,x) \le V(t,x) \le \wt{V}(t,x), \quad \wt{V}(t,x) \in 2^{\Z}\cup \{0\}
\]
for all $(t,x) \in I\times X$.

Since $V$ is a non-negative step function and $V(t,x)\in 2^\Z\cup\{0\}$ for all $(t,x)\in I\times X$, we can write 
\[
f^2 = V = \sum_{k\in \Z} 2^k \II_{E_k},
\]
where the sets $E_k$ are pairwise disjoint.
Then,
\begin{equation}
	fUU^* f = \sum_{k,k'\in\Z} 2^{(k+k')/2} \II_{E_k} UU^* \II_{E_{k'}}.
\end{equation}
Now, let $I_{\ell,j} := [j \ell, (j+1)\ell)\cap I$.
By choosing an appropriate $\ell=\ell(k,k')$, we further decompose
\begin{align}
	fUU^* f
	&= \sum_{k,k'\in\Z} \sum_{i,j\in\Z} 2^{(k+k')/2} \II_{E_k} \II_{I_{\ell(k,k'),i}} UU^* \II_{E_{k'}} \II_{I_{\ell(k,k'),j}} \\
	&= \sum_{k,k'\in\Z} \sum_{|i-j|\ge 3} 2^{(k+k')/2} \II_{E_k} \II_{I_{\ell(k,k'),i}} UU^* \II_{E_{k'}} \II_{I_{\ell(k,k'),j}} \\
	&\quad + \sum_{k,k'\in\Z} \sum_{|i-j|\le 2} 2^{(k+k')/2} \II_{E_k} \II_{I_{\ell(k,k'),i}} UU^* \II_{E_{k'}} \II_{I_{\ell(k,k'),j}}
	=: \SF + \SN.
\end{align}
We call $\SF$ and $\SN$ the far and near parts, respectively.
From a technical point of view, the most important step is to find an appropriate $\ell(k,k')$.

The far part contains interactions between well-separated time intervals.
In this region, the singularity in the dispersive bound does not appear.
The dispersive estimate then gives a good Hilbert--Schmidt bound.
The resulting bound depends only on the space-time norm of $V$ and will ultimately be used to control the relevant eigenvalue count.

The near part contains interactions between close time intervals, so the dispersive estimate is not useful.
We therefore use the energy estimate instead.
To connect this local estimate with the eigenvalue-counting problem, we introduce two further decompositions, each serving a different purpose.
These two decompositions have different roles.

First, we divide the spectrum of $fUU^* f$ into dyadic bands by defining
\[
P_{m,\lm}:=\II_{[2^m\lm,2^{m+1}\lm)}(fUU^*f ),
\quad
N_{m,\lm}:=\rank P_{m,\lm}.
\]
Then,
\[
fUU^* f=\sum_{m\in\Z}P_{m,\lm} fUU^* fP_{m,\lm}.
\]
These projections do not localize the time variables.
Their purpose is to separate the eigenvalues of $fUU^* f$ according to their size.
In particular, $P_{0,\lm}$ selects the eigenvalues whose size is comparable to $\lm$, and $N_{0,\lm}$ is the number that we want to estimate.

Second, the near part is still a large sum over $k,k',i$, and $j$.
We regroup these terms according to the relative indices
\[
n=k-k',
\qquad
n'=j-i.
\]
Since the intervals belong to the near part, only $|n'|\leq2$ occurs.

Combining the two decompositions reduces the near-part estimate to pieces of the form
\[
P_{0,\lm}\SN_{\ep,\lm}
\bigl(n,n';P_{m,\lm} f UU^* fP_{m,\lm}\bigr)P_{0,\lm}.
\]
See Section \ref{subsec:near} for the precise definition of the above notation.
As a result, the bound is expressed in terms of the relative levels $n,n'$ and the spectral level $m$, rather than the four original indices $k,k',i,j$.

For $m\leq0$, the operator-norm estimate obtained from the energy bound can be converted into a trace estimate by multiplying it by $N_{0,\lm}$. See Lemmas \ref{lem:N operator bound} and \ref{lem:N trace bound}.
For $m\geq1$, this direct estimate is not sufficient by itself.
We therefore choose a good spectral scale $\lm_*$ for which
\[
N_{m,\lm_*}\leq 2^{1-rm}N_{0,\lm_*}
\]
for all $m\ge 1$.
The geometric decay of these ranks provides the additional decay needed to sum the higher spectral bands.
The choice of $\ell(k,k')$, the proof of the estimate for each $(n,m)$, and the final numerical summation are the technical part of the argument.
They are given in Section \ref{subsec:near}, especially Lemmas \ref{lem:N operator bound} and \ref{lem:N trace bound}, and in Lemma \ref{lem:numerical}.

After these pieces have been summed, choosing $\ep>0$ sufficiently small gives
\[
\left|\Tr_{L^2(I\times X)}(P_{0,\lm_*}\SN_{\ep,\lm_*}P_{0,\lm_*})\right|
\leq \frac12\lm_*N_{0,\lm_*}.
\]
By the definition of $P_{0,\lm_*}$, we also have
\[
\lm_*N_{0,\lm_*}
\le \abs{\Tr_{L^2(I\times X)}(P_{0,\lm_*}\SF P_{0,\lm_*})} + \abs{\Tr_{L^2(I\times X)}(P_{0,\lm_*}\SN P_{0,\lm_*})}.
\]
Thus, the near contribution can be absorbed into the spectral lower bound.
Therefore, we obtain
\[
\lm_*N_{0,\lm_*}
\le 2\abs{\Tr_{L^2(I\times X)}(P_{0,\lm_*}\SF P_{0,\lm_*})}.
\]
Finally, we apply the Hilbert--Schmidt estimate for the far part.

In summary, the proof has a simple division of roles: the energy estimate controls short-time interactions, the dispersive estimate controls long-time interactions, and the choice of a suitable spectral scale $\lambda_*$ allows these two bounds to be combined at the endpoint.
The estimate is first proved for dyadic step functions, and a density argument then gives \eqref{eq:weak main} for general $V$.
Finally, the duality principle gives \eqref{eq:endpoint}.

\section{Preliminaries}
In this section, we first fix the notation for time intervals and step functions.
We then record some basic facts about Schatten classes.
Finally, we prove that $fUU^*f$ is compact for step functions $f$.
This compactness allows us to use finite-rank spectral projections in Section \ref{sec:near-far}.
\subsection{Notation and definitions}
In the sequel, we fix the following notation.
\begin{itemize}
	\item For $\ell>0$ and $j\in\mathbb Z$, set $I_{\ell,j}=I\cap[j\ell,(j+1)\ell)$.
	\item For every $t\in I$, let $j_\ell(t)\in\Z$ be such that $t\in I_{\ell,j_\ell(t)}$.
\end{itemize}
Note that the family $(I_{\ell,j})_{j\in\Z}$ is pairwise disjoint and covers $I$.
Thus, $j_\ell(t)$ is well defined for every $t\in I$.
Moreover, we define step functions and dyadic step functions as follows.
	\begin{itemize}
		\item Let $f:I\times X \to \C$ be a measurable function. We call $f:I\times X \to \C$ a step function if $f$ takes a finite number of values and there exists a bounded interval $I_0\subset I$ and a finite-measure set $X_0 \subset X$ such that $f=0$ outside of $I_0 \times X_0$.
		\item We call $f:I\times X \to \C$ a dyadic step function if $f$ is a non-negative step function and $f(t,x)\in 2^\Z \cup \{0\}$ for all $(t,x)\in I\times X$.
	\end{itemize}
Finally, we introduce some convenient and intuitive notation.
Let $V\ge 0$ and $f=\sqrt{V}$.
Define $fU:H \to L^2(I\times X)$ by 
\begin{equation}\label{eq:fU}
	(fU)\ph(t,x)=f(t,x)(U(t)\ph)(x), \quad \ph\in H.
\end{equation}
Similarly, define $U^* f: L^2(I\times X) \to H$ by
\begin{equation}
	(U^* f)u := \int_I U(\ta)^* \big(f(\ta)u(\ta)\big)d\ta, \quad u\in L^2(I\times X). 
\end{equation}
Using the notation above, we have
\begin{align}
	fU U^* f = f(t)U(t)\int_I d\ta U(\ta)^* f(\ta),\quad 
	(f U)^* (f U)= \int_I U(t)^* V(t) U(t)dt. 
\end{align}

\subsection{Basic lemmas from operator theory}\label{subsec:lemmas}
We collect necessary tools from basic operator theory.
The following lemma is well known. See, for example, \cite[Theorem 2.11]{Simon2005}.
\begin{lemma}\label{lem:direct-hs}
	Let $\Om$ be a $\si$-finite measure space. 
	Then, $A \in \FS^2(L^2(\Om))$ if and only if $A$ has an integral kernel $A(\om,\om')\in L^2(\Om\times\Om)$.
	Moreover, it holds that
	\begin{equation}\label{eq:direct-hs}
		\|A\|_{\FS^2(L^2(\Om))} = \|A(\om,\om')\|_{L^2_{\om,\om'}(\Om\times \Om)}.
	\end{equation}
\end{lemma}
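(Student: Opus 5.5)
The plan is to reduce everything to Parseval's identity for a suitable orthonormal basis of $L^2(\Om\times\Om)$. Throughout, $L^2(\Om)$ is assumed separable, as for $L^2(X)$ and $L^2(I\times X)$ in our applications. Fix an orthonormal basis $(e_n)_{n\ge1}$ of $L^2(\Om)$ and set
\[
E_{n,m}(\om,\om') := e_n(\om)\overline{e_m(\om')}.
\]

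\textbf{Step 1 (product basis).} We show that $(E_{n,m})_{n,m}$ is an orthonormal basis of $L^2(\Om\times\Om)$. Orthonormality follows from Fubini. For completeness, let $G\in L^2(\Om\times\Om)$ with $\lg G, E_{n,m}\rg=0$ for all $n,m$.
\begin{itemize}
\item By Fubini, $G(\om,\cdot)\in L^2(\Om)$ for a.e.\ $\om$.
\item For fixed $m$, set $g_m(\om):=\int G(\om,\om')e_m(\om')\,d\om'$. Cauchy--Schwarz and Fubini give $g_m\in L^2(\Om)$.
\item The hypothesis says $\lg g_m, e_n\rg=0$ for every $n$, so $g_m=0$ a.e.
\item Since $\Z_{\ge1}$ is countable, there is one full-measure set of $\om$ on which $G(\om,\cdot)$ is orthogonal to every $\overline{e_m}$. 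The family $(\overline{e_m})_m$ is again an orthonormal basis, so $G(\om,\cdot)=0$ there, and $G=0$.
\end{itemize}
$\si$-finiteness is exactly what makes Fubini--Tonelli available here.

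\textbf{Step 2 (kernel $\Rightarrow$ Hilbert--Schmidt).} Let $K\in L^2(\Om\times\Om)$ and let $A$ be its integral operator. Cauchy--Schwarz gives $\|A\|_{\mathrm{op}}\le\|K\|_{L^2}$. By Fubini, $\lg e_n, Ae_m\rg=\lg E_{n,m},K\rg$ (with the inner product linear in the second slot). Hence
\[
\|A\|_{\FS^2}^2=\sum_{m}\|Ae_m\|^2=\sum_{n,m}|\lg e_n,Ae_m\rg|^2=\sum_{n,m}|\lg E_{n,m},K\rg|^2=\|K\|_{L^2}^2,
\]
where the last equality is Parseval for the basis of Step 1. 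This proves \eqref{eq:direct-hs} in this direction.

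\textbf{Step 3 ($\Rightarrow$ kernel).} Let $A\in\FS^2(L^2(\Om))$. Then $\sum_{n,m}|\lg e_n,Ae_m\rg|^2=\|A\|_{\FS^2}^2<\I$. Therefore
\[
K:=\sum_{n,m}\lg e_n,Ae_m\rg\,E_{n,m}
\]
converges in $L^2(\Om\times\Om)$. Let $\wt A$ be the integral operator with kernel $K$. By Step 2, $\wt A$ is bounded and has matrix entries $\lg e_n,\wt Ae_m\rg=\lg E_{n,m},K\rg=\lg e_n,Ae_m\rg$. Two bounded operators with the same matrix in an orthonormal basis coincide, so $A=\wt A$. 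Uniqueness of the kernel a.e.\ also follows from Step 1, since a kernel defining the zero operator has all coefficients $\lg E_{n,m},K\rg$ equal to zero.

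\textbf{Main obstacle.} The only non-formal step is the completeness argument in Step 1. Care is needed to obtain a single null set that works for all $m$, which is why countability of the basis is used. If one wanted to drop separability, the fallback would be to restrict to the separable closed subspace generated by the range of $A$ and $A^*$ together with the support of $K$. This is not needed here.
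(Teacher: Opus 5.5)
Your argument is correct. The paper does not prove this lemma but simply cites Simon's \emph{Trace Ideals} (Theorem 2.11), and your product-basis/Parseval proof is the standard argument for that result. You take two standard facts for granted: the identity $\|A\|_{\FS^2}^2=\sum_m\|Ae_m\|^2$, whose finiteness also gives the compactness required by the paper's singular-value definition of $\FS^2$, and the separability of $L^2(\Om)$, which the paper's definition of $\FS^{\al,\be}(K)$ already assumes since it is stated only for separable $K$.
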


The following lemma is also well known. For example, see \cite[Proposition 1.4 (6)]{Hiai1997}.
\begin{lemma}\label{lem:AB}
	Let $K$ be a Hilbert space. Let $A,B$ be non-negative compact operators on $K$.
	Let $(a_j)_{j\ge 1}$ and $(b_j)_{j\ge 1}$ be singular values of $A$ and $B$, respectively.
	If $0\le A\le B$ as an operator inequality, we have 
	\[
	0\le a_j\le b_j
	\]
	for all $j\ge 1$.
\end{lemma}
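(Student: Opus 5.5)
The plan is to reduce the statement to the Courant--Fischer min-max characterization of eigenvalues of non-negative compact operators. Since $A$ and $B$ are non-negative and compact, they are self-adjoint with $|A|=A$ and $|B|=B$. So their singular values $(a_j)_j$ and $(b_j)_j$ coincide with their eigenvalues, listed in decreasing order with multiplicity and padded with zeros if only finitely many eigenvalues are nonzero. Hence it suffices to compare eigenvalues.

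The first step is to record the min-max formula. For a non-negative compact operator $T$ on $K$ with decreasingly ordered eigenvalues $(t_j)_{j\ge1}$ (padded by zeros),
\[
t_j=\min_{\substack{W\subset K \text{ closed}\\ \operatorname{codim}W\le j-1}}\ \sup_{x\in W,\ \|x\|=1}\langle Tx,x\rangle .
\]
I would verify this from the spectral theorem for compact self-adjoint operators. For the upper bound, take $W$ to be the orthogonal complement of the span of the first $j-1$ eigenvectors; on $W$ the quadratic form is at most $t_j$. For the lower bound, any $W$ of codimension at most $j-1$ meets the $j$-dimensional span of the first $j$ eigenvectors nontrivially, and on that span $\langle Tx,x\rangle\ge t_j\|x\|^2$. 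The padded-zero case needs a remark. When $T$ has fewer than $j$ nonzero eigenvalues, the value is $0$: the sup is always $\ge0$ by non-negativity, and it equals $0$ on the complement of the finitely many nonzero eigenvectors, which has codimension at most $j-1$.

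The second step is the comparison itself. The hypothesis $A\le B$ means $\langle Ax,x\rangle\le\langle Bx,x\rangle$ for all $x\in K$. Hence for every closed $W$ we have $\sup_{x\in W,\|x\|=1}\langle Ax,x\rangle\le\sup_{x\in W,\|x\|=1}\langle Bx,x\rangle$. Taking the infimum over all $W$ with $\operatorname{codim}W\le j-1$ gives $a_j\le b_j$. The inequality $a_j\ge0$ holds by definition of singular values, or directly from $A\ge0$.

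The only genuinely delicate point is the min-max formula in infinite dimensions. One must check that the minimum is attained and handle the zero-padding convention. The argument above covers both. Alternatively, one could cite the standard min-max theorem, for instance from \cite{Simon2005}, and the lemma follows immediately.
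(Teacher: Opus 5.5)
Your Courant--Fischer min-max argument is correct, and it is the standard proof of this fact; the paper gives no proof of its own and simply cites \cite[Proposition 1.4 (6)]{Hiai1997}, which rests on the same variational characterization of singular values. The one case your argument does not literally cover is $K$ finite-dimensional with $j>\dim K$: there $W=\{0\}$ is admissible and the supremum over unit vectors is vacuous, but $a_j=b_j=0$ by the padding convention, so nothing is lost.
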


Lemma \ref{lem:AB} will be used only in the final approximation step.
It says that a positive operator inequality also gives an inequality for every singular value.
In the next statement, $\ph_1$ and $\ph_2$ also denote the corresponding multiplication operators on $L^2(X)$.

\begin{lemma}\label{lem:dispersive-hs}
Let $T:L^1(X)\to L^\I(X)$ be bounded.
Then, for any $\ph_1,\ph_2\in L^2(X)$, it holds that
\begin{equation}\label{eq:dispersive-hs}
 \|\ph_1T\ph_2\|_{\Sp{2}(L^2(X))}
 \leq \|T\|_{L^1(X)\to L^\infty(X)}
 \|\ph_1\|_{L^2(X)}\|\ph_2\|_{L^2(X)}.
\end{equation}
\end{lemma}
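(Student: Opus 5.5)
The statement to prove is Lemma \ref{lem:dispersive-hs}. The plan is to identify an integral kernel for $\ph_1T\ph_2$ and then apply Lemma \ref{lem:direct-hs}.

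\emph{Step 1: a kernel for $T$.} A bounded operator $T:L^1(X)\to L^\I(X)$ on a $\si$-finite space has a kernel $K\in L^\I(X\times X)$ with $\|K\|_{L^\I}=\|T\|_{L^1\to L^\I}$. One way to see this is through the isometric identification of $\mathcal L(L^1,L^\I)$ with $(L^1(X)\otimes_\pi L^1(X))^*=L^1(X\times X)^*=L^\I(X\times X)$, using the pairing $\langle Tg,h\rangle=\iint K(x,y)g(y)h(x)\,d\mu\,d\mu$. A small point to verify is that this duality is the bilinear one. It suffices to check it on simple functions supported on finite-measure sets and then extend by density.

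\emph{Step 2: a kernel for $\ph_1T\ph_2$.} With $K$ in hand, $\ph_1T\ph_2$ acts on $u\in L^2(X)$ by
\[
(\ph_1T\ph_2u)(x)=\int_X \ph_1(x)K(x,y)\ph_2(y)u(y)\,d\mu(y).
\]
This is well defined because $\ph_2u\in L^1$ by Cauchy--Schwarz, so $T(\ph_2 u)\in L^\I$ and $\ph_1T(\ph_2u)\in L^2$. The kernel is therefore $\ph_1(x)K(x,y)\ph_2(y)$.

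\emph{Step 3: the Hilbert--Schmidt bound.} Its $L^2(X\times X)$ norm is at most $\|K\|_{L^\I}\|\ph_1\|_{L^2}\|\ph_2\|_{L^2}$. Lemma \ref{lem:direct-hs} then gives \eqref{eq:dispersive-hs}.

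\emph{The main obstacle} is Step 1: the existence of the $L^\I$ kernel with the sharp norm. The identification $L^1\hat\otimes_\pi L^1=L^1(X\times X)$ is standard, but if I want to avoid it I would use a more elementary route.

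First, reduce to the case where $\ph_1,\ph_2$ are simple functions supported on finite-measure sets. Both sides of \eqref{eq:dispersive-hs} are continuous in $\ph_i$, and the HS norm is lower semicontinuous under strong convergence, so general $\ph_i$ follow by approximation.

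Second, for fixed simple $\ph_1,\ph_2$, estimate the HS norm by duality. The quantity $\|\ph_1 T\ph_2\|_{\FS^2}^2=\sum_{a,b}|\langle e_a,\ph_1T\ph_2e_b\rangle|^2$ is controlled by testing against finite-rank $G=\sum c_{ab}\,\overline{e_a}\otimes e_b$, since
\[
\|\ph_1 T\ph_2\|_{\FS^2}=\sup\bigl|\Tr(G^*\ph_1T\ph_2)\bigr|
\]
over $\|G\|_{\FS^2}\le 1$. Each such trace equals a pairing of $T$ against the function $g(x,y)=\ph_1(x)\ph_2(y)G(x,y)$. Hence, via the projective tensor norm,
\[
\bigl|\Tr(G^*\ph_1T\ph_2)\bigr|\le \|T\|\,\|g\|_{L^1(X\times X)}\le \|T\|\,\|\ph_1\|_{L^2}\|\ph_2\|_{L^2}\|G\|_{L^2}
\]
by Cauchy--Schwarz. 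Here I still need $\|g\|_{L^1\hat\otimes_\pi L^1}=\|g\|_{L^1(X\times X)}$ for finite sums of simple tensors, which follows from the Dunford--Pettis kernel theorem. So I would cite that theorem directly.
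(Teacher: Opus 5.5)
Your proposal is correct and is essentially the paper's argument: the paper also takes the kernel $T(x,y)\in L^\I(X\times X)$ with $\|T(x,y)\|_{L^\I}=\|T\|_{L^1(X)\to L^\I(X)}$, bounds the $L^2(X\times X)$ norm of $\ph_1(x)T(x,y)\ph_2(y)$ by $\|T(x,y)\|_{L^\I}\|\ph_1\|_{L^2}\|\ph_2\|_{L^2}$, and concludes with Lemma \ref{lem:direct-hs}. The only difference is in how the kernel is justified: the paper cites the proof of Theorem 1.3 in Arendt--Bukhvalov, while you use the Dunford--Pettis identification via $L^1\hat\otimes_\pi L^1=L^1(X\times X)$; your alternative duality route with finite-rank $G$ is also sound but rests on the same kernel theorem, so it is not needed.
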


\begin{proof}
	Let $T(x,y)$ be the integral kernel of $T$.
	By Lemma \ref{lem:direct-hs},
	\begin{align}
		\|\ph_1 T \ph_2\|_{\FS^2(L^2(X))}
		&= \|\ph_1(x)T(x,y)\ph_2(y)\|_{L^2_{x,y}(X\times X)} \\
		&\le \|\ph_1\|_{L^2(X)} \|\ph_2\|_{L^2(X)} \|T(x,y)\|_{L^\I_{x,y}(X\times X)}.
 	\end{align}
 	By \cite[Proof of Theorem 1.3]{ArendtBukhvalov1994}, we have $\|T(x,y)\|_{L^\I_{x,y}(X\times X)} = \|T\|_{L^1(X) \to L^\I(X)}$, which completes the proof.
\end{proof}

The following technical lemma is useful for reassembling localized pieces.
\begin{lemma}\label{lem:matching}
Let $(R_j)_j$ and $(S_j)_{j}$ be finite families of pairwise orthogonal projections on $L^2(I\times X)$.
Then, for any $r\in[1,\I)$, 
\begin{equation}\label{eq:matching-contract}
 \bigg\|\sum_j R_j A S_j \bigg\|_{\FS^r(L^2(I\times X))}\le\|A\|_{\FS^r(L^2(I\times X))}.
\end{equation}
\end{lemma}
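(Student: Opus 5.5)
We need to prove Lemma \ref{lem:matching}: for finite families $(R_j)_j$ and $(S_j)_j$ of pairwise orthogonal projections, the block-diagonal compression $\sum_j R_jAS_j$ has $\FS^r$ norm at most $\|A\|_{\FS^r}$. The plan is a pinching (diagonal-averaging) argument. Both endpoints $r=2$ and $r=\I$ are easy, but the cleanest route valid for every $r\in[1,\I)$ is to write the compression as an average of unitary conjugates of $A$. Then the unitary invariance and the triangle inequality for the norm $\FS^r$ ($r\ge1$) give the claim.

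\textbf{Step 1: complete the families.} If the projections do not sum to the identity, we add one more projection to each family. Set $R_0:=1-\sum_j R_j$ and $S_0:=1-\sum_j S_j$, which are projections orthogonal to the others. The enlarged families $(R_j)_{j=0}^N$ and $(S_j)_{j=0}^N$ are resolutions of the identity. We cannot simply append the pair $(R_0,S_0)$ to the sum, since that would change the operator. Instead we index the completed families separately and pick out the diagonal $1\le j\le N$ using roots of unity.

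\textbf{Step 2: write the compression as an average.} Let $\om=e^{2\pi i/(N+1)}$. For $k=0,\dots,N$ define the unitaries
\[
W_k:=\sum_{j=0}^N \om^{jk}R_j, \qquad Z_k:=\sum_{j=0}^N \om^{-jk}S_j .
\]
Then
\[
\frac1{N+1}\sum_{k=0}^N W_kAZ_k=\sum_{j,j'}\Bigl(\frac1{N+1}\sum_k\om^{(j-j')k}\Bigr)R_jAS_{j'}=\sum_{j=0}^N R_jAS_j .
\]
This still contains the unwanted term $R_0AS_0$. To remove it we add an independent sign. With $\ep\in\{\pm1\}$, let $W_k^\ep$ be $W_k$ with $R_0$ multiplied by $\ep$, and keep $Z_k$ for the $S$ side. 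Now $R_0$ pairs with $S_0$ with coefficient $\ep$, which averages to zero over $\ep$. The cross terms $R_0AS_{j'}$ with $j'\neq0$ also carry the factor $\ep$, so they vanish as well, while the terms $R_jAS_{j'}$ with $j\neq0$ are unaffected. Hence
\[
\frac{1}{2(N+1)}\sum_{\ep=\pm1}\sum_{k=0}^N W_k^\ep A Z_k=\sum_{j=1}^N R_jAS_j .
\]

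\textbf{Step 3: conclude.} Each $W_k^\ep$ and $Z_k$ is unitary, because it is a unimodular combination of an orthogonal resolution of the identity. Since $\|UAV\|_{\FS^r}=\|A\|_{\FS^r}$ for unitaries $U,V$ and $\|\cdot\|_{\FS^r}$ is a norm for $r\ge1$, the average has norm at most $\|A\|_{\FS^r}$.

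The only delicate point is the bookkeeping in Step 2, namely handling the completion terms when $\sum R_j\neq1$. An alternative is to argue by duality with $\FS^{r'}$ and the bound $\sum_j\|R_jBS_j\|\le\ldots$, but the averaging argument is simpler and I would use it.
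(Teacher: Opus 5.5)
Your argument is correct and is essentially the paper's proof. Both write $\sum_j R_jAS_j$ as an average of operators $WAZ$ with $\|W\|,\|Z\|\le 1$ and then use the triangle inequality in $\FS^r$ ($r\ge 1$). The paper instead averages over independent Rademacher signs, taking $R=\sum_j\varepsilon_jR_j$ and $S=\sum_j\varepsilon_jS_j$ with $\mathbb{E}[RAS]=\sum_jR_jAS_j$. Because it only needs $R$ and $S$ to be contractions (the ideal property rather than unitary invariance), it avoids your completion to a resolution of the identity and the extra sign used to kill the $R_0$ terms.
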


\begin{proof}
	Let $(\varepsilon_j)_j$ be independent Rademacher variables and set
	\[
	R:=\sum_j\ep_jR_j,\quad S:=\sum_j\ep_j S_j.
	\]
	Since $\BE[\varepsilon_j\varepsilon_k]=\delta_{jk}$,
	\[
	\sum_jR_jAS_j =\BE[RA S].
	\]
	Since $|\ep_j|\le 1$, as an operator inequality on $L^2(I\times X)$,
	\[
	0\le R^*R = \sum_j R_j \le 1,
	\quad
	0\le S^*S=\sum_jS_j\le 1,
	\]
	where we used the pairwise orthogonality of $R_j$ and $S_j$, namely, the fact that $R_j R_k = S_j S_k = 0$ if $j\ne k$.
	Hence
	\[
	\|R\|_{L^2(I\times X)\to L^2(I\times X)} \le 1, \quad \|S\|_{L^2(I\times X) \to L^2(I\times X)} \le 1.
	\]
	By the triangle inequality and the ideal property,
	\begin{multline*}
		\bigg\|\sum_j R_j A S_j \bigg\|_{\FS^r(L^2(I\times X))}
		\le \BE \|R A S\|_{\FS^r(L^2(I\times X))} \\
		\le \BE \|R\|_{L^2(I\times X)\to L^2(I\times X)} \|A\|_{\FS^r(L^2(I\times X))} \|S\|_{L^2(I\times X)\to L^2(I\times X)} \\
		\le \|A\|_{\FS^r(L^2(I\times X))}.
	\end{multline*}
\end{proof}

\subsection{Compactness of $fUU^* f$}
We now prove the compactness needed for the spectral decomposition in the next section.
\begin{lemma}\label{lem:time-tail}
	Let $\sigma>1/2$.  For every $\ell>0$ and every $g\in L^2(I)$,
	\begin{equation}\label{eq:time-tail}
		\left\|\int_I \frac{\II_{\K{|j_\ell(t)-j_\ell(\ta)|\geq3}}}{|t-\ta|^{2\sigma}} g(\ta) d\ta\right\|_{L^2_t(I)}
		\le \frac{C_\si}{\ell^{2\si-1}}\|g\|_{L^2(I)}.
	\end{equation}
\end{lemma}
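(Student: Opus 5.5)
The plan is to apply Schur's test to the integral operator with kernel
\[
K(t,\ta):=\frac{\II_{\K{|j_\ell(t)-j_\ell(\ta)|\ge 3}}}{|t-\ta|^{2\si}}, \qquad t,\ta\in I .
\]
This kernel is non-negative and symmetric in $(t,\ta)$. Schur's test therefore gives an $L^2(I)\to L^2(I)$ operator bound by $\sup_{t\in I}\int_I K(t,\ta)\,d\ta$. Since $K\ge 0$, the integral defining the operator converges absolutely as soon as this supremum is finite, so no separate convergence argument is needed.

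The first key step is the geometric observation about the indicator. If $|j_\ell(t)-j_\ell(\ta)|\ge 3$, then $t\in[j_\ell(t)\ell,(j_\ell(t)+1)\ell)$ and $\ta$ lies in an interval of length $\ell$ whose index differs by at least $3$. Consequently
\[
|t-\ta|\ge \K{|j_\ell(t)-j_\ell(\ta)|-1}\ell\ge 2\ell .
\]
It follows that $K(t,\ta)\le |t-\ta|^{-2\si}\II_{\K{|t-\ta|\ge 2\ell}}$.

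The second step computes the Schur integral. For fixed $t$,
\[
\int_I K(t,\ta)\,d\ta\le \int_{|s|\ge 2\ell}|s|^{-2\si}\,ds=\frac{2(2\ell)^{1-2\si}}{2\si-1}.
\]
This is finite precisely because $2\si>1$, which is where the hypothesis $\si>1/2$ enters. The resulting bound is $C_\si\ell^{-(2\si-1)}$ with $C_\si=2^{2-2\si}/(2\si-1)$. By symmetry the same bound holds for $\sup_\ta\int_I K(t,\ta)\,dt$, and Schur's test then yields \eqref{eq:time-tail}.

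If one prefers to avoid quoting Schur's test, Young's convolution inequality does the same job. Dominate pointwise by $|g|$ extended by zero to $\R$, and convolve with the $L^1$ function $|s|^{-2\si}\II_{\K{|s|\ge 2\ell}}$.

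There is no real obstacle here. The only point needing care is the separation bound $|t-\ta|\ge 2\ell$, and in particular checking it when $I$ is an arbitrary interval, where the intervals $I_{\ell,j}$ may be truncated. Truncation only shrinks the sets, so the lower bound on the distance still holds.
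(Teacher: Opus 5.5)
Your proposal is correct and is essentially the paper's argument. Both use the separation bound $|t-\tau|\ge 2\ell$ to remove the singularity and then the integrability of $|s|^{-2\sigma}$ on $\{|s|\ge 2\ell\}$, which is where $\sigma>1/2$ enters. The paper phrases the last step via Young's inequality and you via Schur's test; for this convolution-type kernel the two are equivalent, and you note the Young route yourself.
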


\begin{proof}
	If $|j_\ell(t)-j_\ell(\ta)|\geq3$, then $|t-\ta|\geq2\ell$ for every $(t,\ta)$.
	Hence, Young's inequality proves \eqref{eq:time-tail}.
\end{proof}

In the proof of Lemma \ref{lem:time-tail}, the separation condition removes the singularity at $t=\ta$.
The assumption $\sigma>1/2$ is used to make the remaining kernel integrable at infinity.

We use the next lemma only to ensure that the spectral projections introduced in Section \ref{sec:near-far} have finite rank.
However, its proof shares the same spirit as the essential part of the proof; namely, the near-far decomposition.  
\begin{lemma}\label{lem:compact}
	Let $f:I \times X \to [0,\I)$ be a step function.
	Then, the operator $f U U^* f:L^2(I\times X)\to L^2(I\times X)$ is compact.
\end{lemma}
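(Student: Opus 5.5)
Fix $\ell>0$ and split $fUU^*f=\SF_\ell+\SN_\ell$, where
\[
\SF_\ell:=\sum_{|i-j|\ge3}\II_{I_{\ell,i}}fUU^*f\,\II_{I_{\ell,j}},\qquad
\SN_\ell:=\sum_{|i-j|\le2}\II_{I_{\ell,i}}fUU^*f\,\II_{I_{\ell,j}}.
\]
The strategy is to show that $\SF_\ell$ is compact (in fact Hilbert--Schmidt) for every $\ell$, and that $\|\SN_\ell\|_{L^2(I\times X)\to L^2(I\times X)}\to0$ as $\ell\to0$. Then $fUU^*f$ is a norm limit of compact operators, hence compact. Boundedness of $fUU^*f$ itself is clear. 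Since $f$ is bounded and supported in $I_0\times X_0$ with $|I_0|<\I$, we have $\|fU\|_{H\to L^2(I\times X)}\le B\|f\|_{L^\I}|I_0|^{1/2}$ by Assumption \ref{ass:energy}.

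\emph{Far part.} Its kernel in the time variables is $\II_{(|j_\ell(t)-j_\ell(\ta)|\ge3)}f(t)U(t)U(\ta)^*f(\ta)$. For fixed $t\ne\ta$, Lemma \ref{lem:dispersive-hs} and Assumption \ref{ass:dispersive} give
\[
\|f(t)U(t)U(\ta)^*f(\ta)\|_{\FS^2(L^2(X))}\le C_0|t-\ta|^{-\si}\|f(t)\|_{L^2(X)}\|f(\ta)\|_{L^2(X)}.
\]
Lemma \ref{lem:direct-hs} on $\Om=I\times X$ then yields
\[
\|\SF_\ell\|_{\FS^2}^2\le C_0^2\iint\frac{\II_{(|j_\ell(t)-j_\ell(\ta)|\ge3)}}{|t-\ta|^{2\si}}\,g(t)^2g(\ta)^2\,dt\,d\ta,\qquad g(t):=\|f(t)\|_{L^2(X)}.
\]
Here $g\in L^2\cap L^\I(I)$, because $f$ is a step function, so $g^2\in L^2(I)$. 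By Cauchy--Schwarz and Lemma \ref{lem:time-tail}, the right-hand side is at most $C_\si C_0^2\ell^{1-2\si}\|g^2\|_{L^2}^2<\I$. Hence $\SF_\ell$ is Hilbert--Schmidt, and in particular compact. One point needs care: Lemma \ref{lem:direct-hs} requires an actual integral kernel on $(I\times X)^2$. I would handle this by approximating with finite-rank truncations of the time integral, or by computing $\Tr(\SF_\ell^*\SF_\ell)$ directly through the operator-valued kernel. Measurability of $(t,\ta)\mapsto U(t)U(\ta)^*$ follows from the strong continuity in Assumption \ref{ass:energy}.

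\emph{Near part.} Group the terms by $n'=j-i\in\{-2,\dots,2\}$. For each fixed $n'$, the families $(\II_{I_{\ell,i}})_i$ and $(\II_{I_{\ell,i+n'}})_i$ consist of pairwise orthogonal projections. The same Rademacher averaging as in Lemma \ref{lem:matching}, which works equally for the operator norm, gives
\[
\Big\|\sum_i\II_{I_{\ell,i}}A\,\II_{I_{\ell,i+n'}}\Big\|\le\sup_i\|\II_{I_{\ell,i}}A\,\II_{I_{\ell,i+n'}}\|.
\]
Only finitely many $i$ matter, since $I_0$ is bounded. Each block factors as $(\II_{I_{\ell,i}}fU)(U^*f\II_{I_{\ell,i+n'}})$, so by the energy bound
\[
\|\II_{I_{\ell,i}}fUU^*f\,\II_{I_{\ell,i+n'}}\|\le B^2\|f\|_{L^\I}^2\,\ell .
\]
Summing over the five values of $n'$ gives $\|\SN_\ell\|\le5B^2\|f\|_{L^\I}^2\ell\to0$.

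The main obstacle is the rigorous Hilbert--Schmidt identification of the far part, namely justifying the kernel representation via Lemma \ref{lem:direct-hs} for an operator built from the abstract family $U(t)$. The near-part estimate is routine.
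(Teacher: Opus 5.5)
Your argument is essentially the paper's proof. It uses the same fixed-$\ell$ near/far split and shows the far part is Hilbert--Schmidt via Lemmas \ref{lem:dispersive-hs}, \ref{lem:direct-hs} and \ref{lem:time-tail}; the paper also applies Lemma \ref{lem:direct-hs} to the operator-valued kernel without further comment, so your caveat there is extra care rather than a difference. It then bounds the near part by $5B^2\|f\|_{L^\I}^2\ell$ via the energy estimate and lets $\ell\to0$.

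One justification needs fixing. The Rademacher averaging of Lemma \ref{lem:matching} only gives $\|\sum_iR_iAS_i\|\le\|A\|=\|fUU^*f\|$, which does not decay in $\ell$. The block-sup bound you need follows instead from the pairwise orthogonality of the $R_i$ and of the $S_i$: $\|\sum_iR_iAS_iu\|^2=\sum_i\|R_iAS_iu\|^2\le\sup_i\|R_iAS_i\|^2\sum_i\|S_iu\|^2\le\sup_i\|R_iAS_i\|^2\|u\|^2$. This is exactly what the paper's Cauchy--Schwarz summation over $|i-j|\le2$ does.
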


\begin{proof}
	Fix $\ell>0$ and let $V:=|f|^2$.  Consider the near part
	\begin{equation}\label{eq:compact-near-part}
		\sum_{|i-j|\leq2}
		\II_{I_{\ell,i}} f U U^* f \II_{I_{\ell,j}}.
	\end{equation}
	For every $\ph \in H$,
	\begin{align*}
		\|\II_{I_{\ell,j}}fU\ph\|_{L^2(I\times X)}^2
		&=\int_{I_{\ell,j}}\int_X
		V(t,x)|U(t)\ph(x)|^2 d \mu(x) dt \\
		&\leq\|V\|_{L^\I(I\times X)} \int_{I_{\ell,j}}\|U(t)\ph\|_{L^2(X)}^2 d t\\
		&\leq B^2\|V\|_{L^\I(I\times X)} |I_{\ell,j}|\|\ph\|_H^2
		\leq B^2\|V\|_{L^\I(I\times X)}\ell\|\ph\|_H^2.
	\end{align*}
	Hence,
	\[
	\|\II_{I_{\ell,j}}fU\|_{H \to L^2(I\times X)} \le B \|f\|_{L^\I(I\times X)} \sqrt{\ell},
	\]
	and, for any $i,j\in\mathbb Z$,
	\begin{equation}
		\left\|\II_{I_{\ell,i}} f U U^* f \II_{I_{\ell,j}}\right\|_{L^2(I\times X) \to L^2(I\times X)}
		\leq B^2\|V\|_{L^\I(I\times X)}\ell.
	\end{equation}
	For each fixed $i\in \Z$ and for each $u\in L^2(I\times X)$,
	\begin{align*}
		\bigg\|\sum_{j,\,|i-j|\leq2}
		\II_{I_{\ell,i}} f U U^* f \II_{I_{\ell,j}} u\bigg\|_{L^2(I\times X)}^2
		&\leq 5B^4\|V\|_{L^\I(I\times X)}^2\ell^2
		\sum_{j,\,|i-j|\leq2}\|\II_{I_{\ell,j}}u\|_{L^2(I\times X)}^2.
	\end{align*}
	Therefore
	\begin{align*}
		\bigg\|\sum_{|i-j|\leq2}
		\II_{I_{\ell,i}} f U U^* f \II_{I_{\ell,j}} u\bigg\|_{L^2(I\times X)}^2
		&=\sum_i\bigg\|\sum_{j,\,|i-j|\leq2} \II_{I_{\ell,i}} f U U^* f \II_{I_{\ell,j}} u\bigg\|_{L^2(I\times X)}^2 \\
		&\le 5B^4\|V\|_{L^\I(I\times X)}^2\ell^2 \sum_{|i-j|\leq2} \|\II_{I_{\ell,j}}u\|_{L^2(I\times X)}^2\\
		&\le 25B^4\|V\|_{L^\I(I\times X)}^2\ell^2 \sum_j\|\II_{I_{\ell,j}}u\|_{L^2(I\times X)}^2\\
		&=25B^4\|V\|^2_{L^\I(I\times X)}\ell^2 \|u\|_{L^2(I\times X)}^2.
	\end{align*}
	Taking square roots gives
	\begin{equation}\label{eq:compact-near}
		\bigg\|fUU^* f - \sum_{|i-j|\ge 3}\II_{I_{\ell,i}} f U U^* f \II_{I_{\ell,j}} \bigg\|_{L^2(I\times X) \to L^2(I\times X)}
		\le 5B^2\|V\|_{L^\I(I\times X)}\ell.
	\end{equation}
	Note that
	\begin{align}
		\sum_{|i-j|\ge 3} \II_{I_{\ell,i}}(t) f(t) U(t) U(\ta)^* f(\ta) \II_{I_{\ell,j}}(\ta) 
		= \II_{(|j_\ell(t)-j_\ell(\ta)|\ge 3)} f(t)U(t)U(\ta)^*f(\ta).
	\end{align}
	Also, by Lemma \ref{lem:dispersive-hs} and Assumption \ref{ass:dispersive},
	\begin{equation}
		\|f(t)U(t)U(\ta)^*f(\ta)\|_{\FS^2(L^2(X))}^2 \le \frac{C_0^2\|V(t)\|_{L^1(X)} \|V(\ta)\|_{L^1(X)} }{|t-\ta|^{2\si}}.
	\end{equation}
	Hence, by Lemma \ref{lem:direct-hs},
	\begin{align}
		&\bigg\|\sum_{|i-j|\ge 3}\II_{I_{\ell,i}} f U U^* f \II_{I_{\ell,j}} \bigg\|_{\FS^2(L^2(I\times X))}^2 \\
		&\quad = \iint_{I\times I} \II_{(|j_\ell(t)-j_\ell(\ta)|\ge 3)}\|f(t)U(t)U(\ta)^*f(\ta)\|_{\FS^2(L^2(X))}^2 dtd\ta\\
		&\quad\leq C_0^2
		\iint_{I\times I} \frac{\II_{(|j_\ell(t)-j_\ell(\ta)|\geq3)}}{|t-\ta|^{2\si}}
		\|V(t)\|_{L^1(X)} \|V(\ta)\|_{L^1(X)}dtd\ta\\
		&\quad\leq C_0^2 \|V\|_{L^2(I;L^1(X))}
		\No{\int_I \frac{\II_{(|j_\ell(t)-j_\ell(\ta)|\geq3)}}{|t-\ta|^{2\si}}\|V(\ta)\|_{L^1(X)}d\ta}_{L^2_t(I)}
	\end{align}
	By Lemma \ref{lem:time-tail},
	\begin{align}
		&\No{\int_I \frac{\II_{(|j_\ell(t)-j_\ell(\ta)|\geq3)}}{|t-\ta|^{2\si}} \|V(\ta)\|_{L^1(X)} d\ta}_{L^2_t(I)}
		\le \frac{C_\si}{\ell^{2\si-1}} \|V\|_{L^2_t(I;L^1(X))}.
	\end{align}
	Hence, the far part is Hilbert--Schmidt.
	Therefore, letting $\ell\to 0$ in \eqref{eq:compact-near}, we conclude that $fUU^* f$ is an operator-norm limit of compact operators,
	which implies that $fUU^*f$ is also compact.
\end{proof}

\section{The near-far decomposition}\label{sec:near-far}
This section contains the main decomposition used in the proof of the eigenvalue-counting estimate (Proposition \ref{prop:count}).
The decomposition has two layers.
We first separate the points of $I\times X$ according to the size of $V$.
For each pair of size levels, we then divide the time variable into intervals whose length depends on the two levels and on the spectral scale $\lambda$.
The operator $fUU^*f$ is split according to whether the two time intervals are close to each other or separated from each other. The separated part will
be estimated by the dispersive bound. The close part will be estimated by
the energy bound together with localization to spectral bands of $fUU^*f$.
The two main outputs of the section are Lemma~\ref{lem:far}, which controls
the far part globally, and Lemma~\ref{lem:N trace bound}, which controls
the trace of the near part on a suitable spectral band.

Throughout this section, we assume that $V$ is a dyadic step function.
Then, we can write
\begin{equation}\label{eq:dyadic-step}
|f(t,x)|^2 = V(t,x)=\sum_{k\in \Z} 2^k \II_{E_k}(t,x)
\end{equation}
for all $(t,x)\in I\times X$, where $E_k=\varnothing$ except for finitely many $k$ and $(E_k)_{k\in\Z}$ is disjoint.
Moreover, there exist a bounded interval $I_0 \subset I$ and a finite-measure set $X_0 \subset X$ such that $E_k \subset I_0 \times X_0$ for all $k\in\Z$.
For $\lambda>0$, $\ep \in(0,1)$, and $k,k'\in\mathbb Z$, set
\begin{equation}\label{eq:ell-kl}
	\ell(k,k'):=\frac{\varepsilon\lambda}{B^2}
	2^{-\frac{k+k'}{2}}2^{\frac{|k-k'|}{4}}.
\end{equation}
Although $\ell(k,k')$ depends on $(\ep,\lm)$, we drop them to avoid overly heavy notation.

\begin{remark}
	The choice of \eqref{eq:ell-kl} is the technical core of this paper.
	It was chosen to satisfy \eqref{eq:ell-power} in the far part analysis and \eqref{eq:choice of ell} in the near part analysis at the same time.
\end{remark}

For $j\in\mathbb Z$, define the projection on $L^2(I\times X)$ by
\begin{equation}\label{eq:Q-klj}
	Q_{\ep,\lm}(k,k',j):=\II_{E_k}\II_{I_{\ell(k,k'),j}}. 
\end{equation}
We decompose $fUU^* f$ into near and far parts as
\begin{equation}\label{eq:near far decomposition}
	f U U^* f = \SN_{\ep,\lm} + \SF_{\ep,\lm},
\end{equation}
where $\SN_{\ep,\lm}$ is the near part and $\SF_{\ep,\lm}$ is the far part.
More precisely, $\SN_{\ep,\lm}$ and $\SF_{\ep,\lm}$ are given by
\begin{align}
	\SN_{\ep,\lm} &= \sum_{k,k'\in\mathbb Z} \sum_{|i-j|\leq2}
	Q_{\ep,\lm}(k,k',i) f U U^* f Q_{\ep,\lm}(k',k,j),\label{eq:near-part} \\
	\SF_{\ep,\lm} &= \sum_{k,k'\in\mathbb Z} \sum_{|i-j|\ge 3}
	Q_{\ep,\lm}(k,k',i) f U U^* f Q_{\ep,\lm}(k',k,j).\label{eq:far-part}
\end{align}
The words ``near'' and ``far'' refer to the indices of the time intervals.
If $|i-j|\ge3$, then $|t-\ta|\ge2\ell(k,k')$ for almost every
$t\in I_{\ell(k,k'),i}$ and $\ta\in I_{\ell(k,k'),j}$.
This removes the singularity of the dispersive kernel from the far part.
The remaining pairs, with $|i-j|\le2$, form the near part.
The symmetry $\ell(k,k')=\ell(k',k)$ means that the same time partition
is used on the two sides of each block. Consequently,
$\SN_{\varepsilon,\lambda}$ and $\SF_{\varepsilon,\lambda}$ are
self-adjoint. Neither operator needs to be non-negative.

For any $m\in\Z$ and $\lm>0$, define
\begin{equation}\label{eq:P-m}
	P_{m,\lm}=\II_{[2^m\lambda,2^{m+1}\lambda)}( fUU^*f ),
	\quad N_{m,\lm} =\rank P_{m,\lm}.
\end{equation}
By Lemma \ref{lem:compact}, the operator $fUU^*f$ is compact.
Hence, $P_{m,\lm}$ is a finite-rank operator and $0\le N_{m,\lm} <\I$.
Since
\begin{equation}
	x = \sum_{m\in\Z}\II_{[2^m\lm,2^{m+1}\lm)}(x) x \II_{[2^m\lm,2^{m+1}\lm)}(x)
\end{equation}
for all $x\ge 0$, we have
\begin{equation}\label{eq:spectral-series}
	fUU^*f =\sum_{m\in\Z}P_{m,\lm} fUU^*f P_{m,\lm}.
\end{equation}
Note that the series in \eqref{eq:spectral-series} converges in operator norm.

\subsection{Far part}\label{subsec:far}
We first estimate the time-separated part.
\begin{lemma}[Far part estimate]\label{lem:far}
	Let $r:=2\si+1$.
	The far part is of the Hilbert--Schmidt class and satisfies
	\begin{equation}\label{eq:far}
		\|\SF_{\ep,\lm}\|_{\Sp{2}(L^2(I\times X))}^2
		\le \frac{C_\sigma C_0^2B^{2(r-2)}}{\ep^{r-2}\lm^{r-2}}
		\|V\|_{L^r(I;L^{r/2}(X))}^r.
	\end{equation}
\end{lemma}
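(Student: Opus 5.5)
The plan is to compute the Hilbert--Schmidt norm directly through the integral kernel (Lemma~\ref{lem:direct-hs}), bound each block with the dispersive estimate via Lemma~\ref{lem:dispersive-hs}, integrate out the separated time variable (integrable because $\si>1/2$, as in Lemma~\ref{lem:time-tail}), and sum over the size levels $(k,k')$. The factor $2^{|k-k'|/4}$ in \eqref{eq:ell-kl} should supply the geometric decay for that sum.

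\textbf{Step 1: block structure.} The kernel of $\SF_{\ep,\lm}$ at $((t,x),(\ta,y))$ is
\[
\sum_{k,k'}\II_{E_k}(t,x)\,\II_{E_{k'}}(\ta,y)\,\II_{(|j_{\ell(k,k')}(t)-j_{\ell(k,k')}(\ta)|\ge3)}\,2^{(k+k')/2}\,\big(U(t)U(\ta)^*\big)(x,y).
\]
Since the $E_k$ are pairwise disjoint, for each point at most one pair $(k,k')$ contributes. Hence the squared $L^2$ norm of the kernel splits as a sum over $(k,k')$ of the squared norms of the individual blocks.

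Write $E_k(t)=\{x:(t,x)\in E_k\}$ and $g_k(t):=\mu(E_k(t))$. By Lemma~\ref{lem:dispersive-hs}, Assumption~\ref{ass:dispersive}, and the fact that $|j_\ell(t)-j_\ell(\ta)|\ge3$ forces $|t-\ta|\ge 2\ell$, the $(k,k')$ block is bounded by
\[
C_0^2\,2^{k+k'}\iint \frac{\II_{(|t-\ta|\ge 2\ell(k,k'))}}{|t-\ta|^{2\si}}\,g_k(t)\,g_{k'}(\ta)\,dt\,d\ta .
\]

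\textbf{Step 2: time integration.} The truncated kernel $|s|^{-2\si}\II_{(|s|\ge2\ell)}$ has $L^1(\R)$ norm at most $C_\si\ell^{1-2\si}=C_\si\ell^{-(r-2)}$. Young's inequality (Schur's test) then bounds the $(k,k')$ block by
\[
C_\si C_0^2\,\ell(k,k')^{-(r-2)}\,2^{k+k'}\,\|g_k\|_{L^2(I)}\|g_{k'}\|_{L^2(I)}.
\]
Inserting \eqref{eq:ell-kl} gives
\[
\ell(k,k')^{-(r-2)}=\Big(\frac{B^2}{\ep\lm}\Big)^{r-2}\,2^{\frac{(k+k')(r-2)}{2}}\,2^{-\frac{|k-k'|(r-2)}{4}}.
\]
The powers of $2$ combine into $2^{(k+k')r/2}$. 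Setting $h_k:=2^{kr/2}g_k$, the $(k,k')$ block is therefore bounded by
\[
C_\si C_0^2\Big(\frac{B^2}{\ep\lm}\Big)^{r-2}\,2^{-\frac{(r-2)|k-k'|}{4}}\,\|h_k\|_{L^2}\|h_{k'}\|_{L^2}.
\]

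\textbf{Step 3: summation over levels.} Since $r>2$, the matrix $2^{-(r-2)|k-k'|/4}$ is bounded on $\ell^2(\Z)$ with norm depending only on $\si$ (Schur's test). Hence the double sum is at most
\[
C_\si\,\sum_k\|h_k\|_{L^2}^2\le C_\si\,\Big\|\sum_k h_k\Big\|_{L^2(I)}^2,
\]
where the last inequality uses $h_k\ge0$. Finally, $\|V(t)\|_{L^{r/2}(X)}^{r/2}=\sum_k 2^{kr/2}g_k(t)$, so
\[
\Big\|\sum_k h_k\Big\|_{L^2(I)}^2=\|V\|^r_{L^r(I;L^{r/2}(X))}.
\]
Multiplying by the constant from Step 2 yields \eqref{eq:far}.

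\textbf{Main obstacle.} The delicate point is the exponent bookkeeping in Step 2. The choice $\ell\propto 2^{-(k+k')/2}$ must convert $2^{k+k'}\ell^{-(r-2)}$ into exactly $2^{(k+k')r/2}$, which is the homogeneity of the $L^r(L^{r/2})$ norm. At the same time, the extra factor $2^{|k-k'|/4}$ in $\ell$ must leave a summable off-diagonal decay; this works precisely because $r-2=2\si-1>0$. A secondary point is justifying that the HS norms of the blocks add, which follows from the disjointness of the sets $E_k\times E_{k'}$ in the kernel variables.
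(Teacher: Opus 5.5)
Your proposal is correct and follows essentially the same route as the paper. You split the Hilbert--Schmidt norm of $\SF_{\ep,\lm}$ into $(k,k')$ blocks using the disjointness of the $E_k$, bound each block with the dispersive Hilbert--Schmidt estimate, and use the truncated time kernel to produce $\ell(k,k')^{-(r-2)}$. You then close with a discrete Young/Schur bound in $(k,k')$ followed by $\sum_k h_k^2\le(\sum_k h_k)^2$, exactly as the paper does; your exponent bookkeeping and constant $C_\si C_0^2 B^{2(r-2)}(\ep\lm)^{-(r-2)}$ match.
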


\begin{proof}
Let $E_k(t)=\{x\in X:(t,x)\in E_k\}$.
By the same argument as in the proof of Lemma \ref{lem:compact}, 
\begin{align}
 \|\SF_{\ep,\lm}\|_{\FS^2(L^2(I\times X))}^2
 &= \sum_{k,k'} \bigg\| \sum_{|i-j|\ge3} Q_{\ep,\lm}(k,k',i) f U U^* f Q_{\ep,\lm}(k',k,j) \bigg\|_{\FS^2(L^2(I\times X))}^2\\
 &\le C_0^2 \sum_{k,k'}\iint_{I\times I}\frac{\II_{(|j_{\ell(k,k')}(t)-j_{\ell(k,k')}(\ta)|\geq3)}}{|t-\ta|^{2\si}}
 2^k\mu(E_k(t))2^{k'}\mu(E_{k'}(\ta)) dtd\ta,
\end{align}
where we used
\[
\Tr\Dk{\K{Q_{\ep,\lm}(k,k',i) f U U^* f Q_{\ep,\lm}(k',k,j)}^* Q_{\ep,\lm}(\wt{k},\wt{k}',i) f U U^* f Q_{\ep,\lm}(\wt{k}',\wt{k},j)} = 0
\]
if $(k,k') \ne (\wt{k},\wt{k}')$.
By Lemma \ref{lem:time-tail} and $r=1+2\si$, we have
\begin{multline}\label{eq:far-first}
	\sum_{k,k'}\iint_{I\times I}\frac{\II_{(|j_{\ell(k,k')}(t)-j_{\ell(k,k')}(\ta)|\ge3)}}{|t-\ta|^{2\si}}
	2^k\mu(E_k(t))2^{k'}\mu(E_{k'}(\ta)) dtd\ta \\
	\le C_\sigma
	\sum_{k,k'}\frac{\|2^k\mu(E_k(t))\|_{L^2_t(I)}
		\|2^{k'}\mu(E_{k'}(t))\|_{L^2_t(I)}}{\ell(k,k')^{r-2}}.
\end{multline}
By the definition of $\ell(k,k')$, namely, by \eqref{eq:ell-kl},
\begin{align} \label{eq:ell-power}
 \frac{1}{\ell(k,k')^{r-2}}
 =\frac{B^{2(r-2)} 2^{(r-2)(k+k')/2}}{\ep^{r-2} \lm^{r-2} 2^{(r-2)|k-k'|/4}}.
\end{align}
Also,
\[
 \|2^k\mu(E_k(t))\|_{L^2_t(I)}
 =\frac{\|2^{rk/2}\mu(E_k(t))\|_{L^2_t(I)}}{2^{(r-2)k/2}}.
\]
Collecting the above estimates,
\begin{equation}\label{eq:far-convolution}
 \|\SF_{\ep,\lm}\|_{\FS^2(L^2(I\times X))}^2
 \leq
 \frac{C_\si C_0^2 B^{2(r-2)}}{\ep^{r-2}\lm^{r-2}} 
 \sum_{k,k'}
 \frac{\|2^{rk/2}\mu(E_k(t))\|_{L^2_t(I)}
 	\|2^{rk'/2}\mu(E_{k'}(t))\|_{L^2_t(I)}}{2^{(r-2)|k-k'|/4}}.
\end{equation}
Since $r>2$, the discrete kernel $2^{-(r-2)|k|/4}$ belongs to $\ell^1_k(\mathbb Z)$.
Young's inequality gives
\begin{align}
 \sum_{k,k'}
 \frac{\|2^{rk/2}\mu(E_k(t))\|_{L^2_t(I)}\|2^{rk'/2}\mu(E_{k'}(t))\|_{L^2_t(I)}}{2^{(r-2)|k-k'|/4}}
 \le C_\si \sum_k\|2^{rk/2}\mu(E_k(t))\|_{L^2_t(I)}^2.
\end{align}
Finally,
\begin{align}
 \sum_k\|2^{rk/2}\mu(E_k(t))\|_{L^2_t(I)}^2
 &=\int_I\sum_k\bigl(2^{rk/2}\mu(E_k(t))\bigr)^2 d t\leq\int_I\bigg(\sum_k2^{rk/2}\mu(E_k(t))\bigg)^2 d t\notag\\
 &=\int_I\|V(t)\|_{L^{r/2}(X)}^r dt=\|V\|_{L^r(I;L^{r/2}(X))}^r.
\end{align}
Collecting the above estimates, we obtain \eqref{eq:far}.
\end{proof}

\begin{remark}
	The proof of Lemma \ref{lem:far} uses only the dispersive estimate, the dyadic level decomposition, and the assumption $r>2$.
	It does not use the spectral projections $P_{m,\lambda}$.
	All spectral information enters through the near part.
\end{remark}

\subsection{Near part}\label{subsec:near}
Let $n\in \Z$ and $n'\in\{-2,-1,0,1,2\}$.
For an operator $A$ on $L^2(I\times X)$, set
\begin{equation}\label{eq:matching-map}
	\begin{aligned}
	\SN_{\ep,\lm}(n,n';A)
	&:=\sum_{k-k'=n} \sum_{i-j=-n'} Q_{\ep,\lm}(k,k',i)A Q_{\ep,\lm}(k',k,j)\\
	&=\sum_{k',j\in\Z} Q_{\ep,\lm}(k'+n,k',j)A Q_{\ep,\lm}(k',k'+n,j+n').
	\end{aligned}
\end{equation}
Since $\ell(k,k')=\ell(k',k)$, the near part is exactly
\begin{equation} \label{eq:near-matching}
	\SN_{\ep,\lm} = \sum_{n'=-2}^2 \SN_{\ep,\lm}(0,n';fUU^* f)
	+2\sum_{n\geq1}\sum_{n'=-2}^2 \Re \SN_{\ep,\lm}(n,n';fUU^*f),
\end{equation}
where $\Re A := (A + A^*)/2$.

The next two lemmas use only the energy estimate (Assumption \ref{ass:energy}).
\begin{lemma}\label{lem:local}
	Let $\ell>0$ and $j,k\in\mathbb Z$. Then, we have
	\begin{align} \label{eq:local}
		&\II_{E_k}\II_{I_{\ell,j}} fUU^*f 
		\II_{E_k}\II_{I_{\ell,j}}\leq B^22^k\ell\,\II_{E_k}\II_{I_{\ell,j}}
	\end{align}
	as an operator inequality on $L^2(I\times X)$.
\end{lemma}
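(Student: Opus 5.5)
Write $Q:=\II_{E_k}\II_{I_{\ell,j}}$, which is an orthogonal projection on $L^2(I\times X)$ (a multiplication by an indicator). The left-hand side of \eqref{eq:local} is $Q\,fUU^*f\,Q = (U^*fQ)^*(U^*fQ)$, so it is non-negative. The plan is to write it as $A^*A$ with $A:=U^*fQ:L^2(I\times X)\to H$, and then compare with the right-hand side through the bound $A^*A\le \|A\|^2 Q$. This bound holds because $A=AQ$, hence $A^*A = QA^*AQ\le \|A\|^2 Q^2=\|A\|^2 Q$.

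So everything reduces to the operator-norm bound $\|A\|^2=\|A^*\|^2=\|QfU\|^2_{H\to L^2(I\times X)}\le B^2 2^k\ell$. We compute this exactly as in the proof of Lemma \ref{lem:compact}. For $\ph\in H$,
\[
\|QfU\ph\|_{L^2(I\times X)}^2=\int_{I_{\ell,j}}\int_X \II_{E_k}(t,x)V(t,x)|U(t)\ph(x)|^2\,d\mu(x)\,dt .
\]
On $E_k$ we have $V=2^k$ by \eqref{eq:dyadic-step}, since the $E_k$ are disjoint. Dropping the indicator of $E_k$, the right-hand side is at most
\[
2^k\int_{I_{\ell,j}}\|U(t)\ph\|_{L^2(X)}^2\,dt\le 2^k B^2|I_{\ell,j}|\,\|\ph\|_H^2\le B^2 2^k\ell\,\|\ph\|_H^2,
\]
using Assumption \ref{ass:energy}. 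Measurability of $t\mapsto \|U(t)\ph\|$ comes from the continuity in that assumption.

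Combining the two steps gives $Q fUU^*f Q\le B^22^k\ell\, Q$, which is \eqref{eq:local}. I do not expect a real obstacle. The only points to check are that $QfU$ and $U^*fQ$ are adjoint to each other, which is immediate from the definitions of $fU$ and $U^*f$ together with the fact that $Q$ is real and commutes with $f$, and that $Q$ is idempotent, so that $QA^*AQ\le\|A\|^2Q$ holds.
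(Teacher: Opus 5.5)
Your proposal is correct and follows the paper's proof. The paper carries out the same computation, $\|\II_{E_k}\II_{I_{\ell,j}}fU\ph\|_{L^2(I\times X)}^2\le B^22^k\ell\,\|\ph\|_H^2$ via Assumption~\ref{ass:energy}, and then simply states that this is equivalent to \eqref{eq:local}; your factorization $Q\,fUU^*f\,Q=A^*A$ with $A=U^*fQ$, together with $QA^*AQ\le\|A\|^2Q$, makes that equivalence explicit.
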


\begin{proof}
	For $\ph\in H$, by Assumption \ref{ass:energy}, 
	\begin{align*}
		\|\II_{E_k}\II_{I_{\ell,j}}fU\ph\|_{L^2(I\times X)}^2
		&=2^k\int_{I_{\ell,j}}\int_X
		\II_{E_k}(t,x)|U(t)\ph(x)|^2 d \mu(x) d t\\
		&\leq2^k\int_{I_{\ell,j}}\|U(t)\ph\|_{L^2(X)}^2 dt\leq B^22^k\ell\|\ph\|_H^2.
	\end{align*}
	This is equivalent to \eqref{eq:local}.
\end{proof}

\begin{lemma}\label{lem:local-spectral}
	For any $k,j,m\in\Z$, $\ell>0$,	and $\lm>0$, it holds that
	\begin{equation}\label{eq:local-spectral}
		\II_{E_k}\II_{I_{\ell,j}}P_{m,\lm}\II_{E_k}\II_{I_{\ell,j}}
		\leq
		\min\left(1,\frac{B^22^k\ell}{2^m\lambda}\right)
		\II_{E_k}\II_{I_{\ell,j}}.
	\end{equation}
\end{lemma}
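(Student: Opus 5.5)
Write $Q:=\II_{E_k}\II_{I_{\ell,j}}$; this is an orthogonal projection on $L^2(I\times X)$ because it is multiplication by an indicator. The inequality \eqref{eq:local-spectral} has two parts. The bound by $1\cdot Q$ is immediate. Since $0\le P_{m,\lm}\le 1$, conjugation gives $QP_{m,\lm}Q\le Q^2=Q$. The real content is the second bound,
\[
QP_{m,\lm}Q\le \frac{B^22^k\ell}{2^m\lm}\,Q,
\]
and the minimum of the two bounds then follows.

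For the second bound I would use the spectral definition of $P_{m,\lm}$. Since $P_{m,\lm}=\II_{[2^m\lm,2^{m+1}\lm)}(fUU^*f)$ and $x\ge 2^m\lm$ on the support of this indicator, functional calculus gives the operator inequality
\[
P_{m,\lm}\le \frac{1}{2^m\lm}\,P_{m,\lm}\,fUU^*f\,P_{m,\lm}\le \frac{1}{2^m\lm}\,fUU^*f .
\]
The second inequality holds because $fUU^*f\ge0$ commutes with $P_{m,\lm}$, so $fUU^*f=P_{m,\lm}fUU^*fP_{m,\lm}+(1-P_{m,\lm})fUU^*f(1-P_{m,\lm})$, and the second summand is non-negative. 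Equivalently, $x\,\II_{[2^m\lm,2^{m+1}\lm)}(x)\le x$ for $x\ge0$.

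Next, conjugate by the projection $Q$; conjugation preserves operator inequalities. This gives
\[
QP_{m,\lm}Q\le \frac{1}{2^m\lm}\,Q\,fUU^*f\,Q.
\]
Lemma \ref{lem:local} states exactly that $Q\,fUU^*f\,Q\le B^22^k\ell\,Q$. Combining the two inequalities yields the second bound, and together with the first it gives \eqref{eq:local-spectral}.

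No step here is a real obstacle. The only point needing care is the functional-calculus inequality $P_{m,\lm}\le (2^m\lm)^{-1}fUU^*f$. It relies on $fUU^*f$ being a non-negative self-adjoint operator. Non-negativity holds because $fUU^*f=(U^*f)^*(U^*f)$, and the projection $P_{m,\lm}$ is defined by its spectral calculus. Compactness, from Lemma \ref{lem:compact}, is not even needed for this step.
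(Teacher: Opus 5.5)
Your proposal is correct and is essentially the paper's proof: the paper also uses the functional-calculus bound $P_{m,\lm}\le (2^m\lm)^{-1}fUU^*f$, conjugates by $\II_{E_k}\II_{I_{\ell,j}}$, and applies Lemma \ref{lem:local}. Your explicit derivation of the bound by $1$ from $0\le P_{m,\lm}\le 1$ just spells out a step the paper leaves implicit.
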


\begin{proof}
	Since
	\[
	\II_{[2^m\lm,2^{m+1}\lm)}(x) \le \frac{1}{2^m\lm}x
	\]
	for all $x\ge 0$, we have
	\[
	P_{m,\lm}\leq \frac{1}{2^m\lambda} fUU^*f.
	\]
	Hence, Lemma \ref{lem:local} gives \eqref{eq:local-spectral}.
\end{proof}

The next lemma is the most important step in this section from a technical point of view.
\begin{lemma}\label{lem:N operator bound}
	Let $\lm>0$. Then, for any $n\ge 0$, $m\in\Z$, and $n'\in\{-2,-1,0,1,2\}$, it holds that
	\begin{align}\label{eq:matching-op}
		&\left\|P_{0,\lm} \SN_{\ep,\lm}(n,n';P_{m,\lm} fUU^*f P_{m,\lm})P_{0,\lm}\right\|_{L^2(I\times X)\to L^2(I\times X)}
		\le 2\lm w(n,m),
	\end{align}
	where
	\begin{equation}\label{eq:w-nm}
		w(n,m):=2^m \left[ \min(1,\ep 2^{3n/4-m}) \min(1,\ep 2^{-n/4-m}) \right]^{1/2}.
	\end{equation}
\end{lemma}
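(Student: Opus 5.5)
The plan is to factor $P_{m,\lm} fUU^*f P_{m,\lm}$ as $Y_m^*Y_m$ and then apply a Cauchy--Schwarz inequality for sums of products. The two resulting square functions are controlled by Lemma \ref{lem:local-spectral}, which uses only the energy bound. No dispersive information enters.

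\textbf{Step 1 (factorization).} Set $Y_m := U^* f P_{m,\lm} : L^2(I\times X)\to H$, so that $A_m := P_{m,\lm} fUU^*f P_{m,\lm} = Y_m^*Y_m$. Since $P_{m,\lm}$ is the spectral projection of $fUU^*f$ onto $[2^m\lm,2^{m+1}\lm)$, we have
\begin{equation*}
0\le A_m\le 2^{m+1}\lm P_{m,\lm}.
\end{equation*}
By the second expression in \eqref{eq:matching-map}, index the terms by $\alpha=(k',j)$. Write $Q^L_\alpha := Q_{\ep,\lm}(k'+n,k',j)$ and $Q^R_\alpha := Q_{\ep,\lm}(k',k'+n,j+n')$. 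Then
\begin{equation*}
P_{0,\lm}\SN_{\ep,\lm}(n,n';A_m)P_{0,\lm} = \sum_\alpha R_\alpha^* S_\alpha,
\qquad R_\alpha := Y_mQ^L_\alpha P_{0,\lm},\quad S_\alpha := Y_mQ^R_\alpha P_{0,\lm}.
\end{equation*}

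\textbf{Step 2 (Cauchy--Schwarz for operator sums).} Let $\mathbf R$ and $\mathbf S$ be the column operators $u\mapsto (R_\alpha u)_\alpha$ and $u\mapsto (S_\alpha u)_\alpha$ into $\bigoplus_\alpha H$. Then $\sum_\alpha R_\alpha^*S_\alpha=\mathbf R^*\mathbf S$, and hence
\begin{equation*}
\Big\|\sum_\alpha R_\alpha^*S_\alpha\Big\| \le \Big\|\sum_\alpha R_\alpha^*R_\alpha\Big\|^{1/2}\Big\|\sum_\alpha S_\alpha^*S_\alpha\Big\|^{1/2}.
\end{equation*}
The left square function equals $P_{0,\lm}\big(\sum_\alpha Q^L_\alpha A_m Q^L_\alpha\big)P_{0,\lm}$. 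Using $A_m\le 2^{m+1}\lm P_{m,\lm}$, it is bounded above by $2^{m+1}\lm\, P_{0,\lm}\big(\sum_\alpha Q^L_\alpha P_{m,\lm}Q^L_\alpha\big)P_{0,\lm}$. The same holds with $R$ replaced by $S$.

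\textbf{Step 3 (local spectral bound and disjointness).} For fixed $(n,n')$, the projections $(Q^L_\alpha)_\alpha$ are pairwise orthogonal. Indeed, distinct $k'$ give disjoint sets $E_{k'+n}$. For fixed $k'$, the time partition $\ell(k'+n,k')$ is fixed, so distinct $j$ give disjoint intervals. The same argument applies to $(Q^R_\alpha)_\alpha$.

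Lemma \ref{lem:local-spectral} gives $Q^L_\alpha P_{m,\lm} Q^L_\alpha \le c_L Q^L_\alpha$, with $c_L=\min(1,B^22^k\ell(k,k')/(2^m\lm))$ and $k=k'+n$. By \eqref{eq:ell-kl},
\begin{equation*}
B^22^k\ell(k,k')/\lm=\ep 2^{k-(k+k')/2+n/4}=\ep2^{3n/4},
\end{equation*}
so $c_L=\min(1,\ep2^{3n/4-m})$, which is independent of $\alpha$. On the right, the level is $k'$, and by the symmetry $\ell(k',k'+n)=\ell(k'+n,k')$ we get $B^22^{k'}\ell/\lm=\ep2^{-n/2+n/4}=\ep2^{-n/4}$. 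Hence $c_R=\min(1,\ep2^{-n/4-m})$. Orthogonality then gives $\sum_\alpha Q^L_\alpha P_{m,\lm} Q^L_\alpha\le c_L\sum_\alpha Q^L_\alpha\le c_L$, and likewise $c_R$ bounds the right-hand sum.

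\textbf{Step 4 (conclusion).} Combining Steps 2 and 3, and using that $P_{0,\lm}$ is a contraction, gives the bound
\begin{equation*}
(2^{m+1}\lm c_L)^{1/2}(2^{m+1}\lm c_R)^{1/2}=2\lm\, 2^m(c_Lc_R)^{1/2}=2\lm\,w(n,m).
\end{equation*}

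The main obstacle is Step 2. One must keep $A_m$ in the factorized form $Y_m^*Y_m$, because the direct bound $\|\sum_\alpha Q^L_\alpha A_m Q^R_\alpha\|$ through Lemma \ref{lem:matching} loses the asymmetric gains $c_L$ and $c_R$. The rest is bookkeeping: checking that $\ell$ depends only on the unordered pair $\{k,k'\}$, so that the same time partition appears on both sides and orthogonality holds within each family.
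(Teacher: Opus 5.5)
Your proof is correct and follows essentially the paper's argument. The paper likewise applies Cauchy--Schwarz over the index $(k',j)$ and bounds each term by $\|P_{m,\lm}fUU^*fP_{m,\lm}\|\le 2^{m+1}\lm$ times $\|P_{m,\lm}Q^L_\alpha P_{0,\lm}u\|\,\|P_{m,\lm}Q^R_\alpha P_{0,\lm}v\|$, then finishes with Lemma \ref{lem:local-spectral}, \eqref{eq:choice of ell} and the orthogonality of the $Q$'s as in your Step 3. The only difference is that it factors through $P_{m,\lm}$ (using $P_{m,\lm}^2=P_{m,\lm}$) rather than through $Y_m=U^*fP_{m,\lm}$; this gives the same chain of inequalities, so the factorization you flag as the main obstacle can simply be the trivial one.
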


\begin{proof}
For any $u,v\in L^2(I\times X)$,
\begin{multline*}
	\left|
	\left\langle u, P_{0,\lm} \SN_{\ep,\lm}(n,n';P_{m,\lm} fUU^*f P_{m,\lm})P_{0,\lm} v
	\right\rangle_{L^2(I\times X)}
	\right|\\
	\leq
	\|P_{m,\lm} fUU^*f P_{m,\lm}\|_{L^2(I\times X)\to L^2(I\times X)}
	\left(\sum_{k',j} \|P_{m,\lm} Q_{\ep,\lm}(k'+n,k',j) P_{0,\lm}u\|^2_{L^2(I\times X)}\right)^{1/2}\\ 
	\cdot \left(\sum_{k',j} \|P_{m,\lm} Q_{\ep,\lm}(k',k'+n,j+n') P_{0,\lm}v\|^2_{L^2(I\times X)}\right)^{1/2}.
\end{multline*}
By Lemma \ref{lem:local-spectral},
\begin{align}
	&\sum_{k',j} \|P_{m,\lm} Q_{\ep,\lm}(k'+n,k',j) P_{0,\lm}u\|^2_{L^2(I\times X)} \\
	&\quad = \sum_{k',j} \left\lg P_{0,\lm} u \Big| \II_{E_{k'+n}} \II_{I_{\ell(k'+n,k'),j}} P_{m,\lm} \II_{E_{k'+n}} \II_{I_{\ell(k'+n,k'),j}} P_{0,\lm} u \right\rg_{L^2(I\times X)} \\
	&\quad \le  \sum_{k',j}  \min\bigg(1,\frac{B^2 2^{k'+n} \ell(k'+n,k')}{2^m\lambda}\bigg) \left\lg P_{0,\lm} u \Big|\II_{E_{k'+n}}\II_{I_{\ell(k'+n,k'),j}} P_{0,\lm} u \right\rg_{L^2(I\times X)}.
\end{align}
Note that by \eqref{eq:ell-kl},
\begin{equation}\label{eq:choice of ell}
\begin{dcases}
	\frac{B^2 2^{k'+n}\ell(k'+n,k')}{2^m\lambda}=\varepsilon2^{3n/4-m}, \\
	\frac{B^2 2^{k'}\ell(k'+n,k')}{2^m\lambda} =\varepsilon2^{-n/4-m}.
\end{dcases}
\end{equation}
Hence, 
\begin{align}\label{eq:left sum}
	\sum_{k',j} \|P_{m,\lm} Q_{\ep,\lm}(k'+n,k',j) P_{0,\lm}u\|^2_{L^2(I\times X)}
	\le \min(1,\ep2^{3n/4-m})\|u\|_{L^2(I\times X)}^2.
\end{align}
Similarly, 
\begin{align}\label{eq:right sum}
	\sum_{k',j} \|P_{m,\lm} Q_{\ep,\lm}(k',k'+n,j+n') P_{0,\lm} v\|_{L^2(I\times X)}^2
	\leq\min(1,\ep2^{-n/4-m})\|v\|_{L^2(I\times X)}^2.
\end{align}
Also, by the definition of $P_{m,\lm}$, 
\begin{equation}\label{eq:operator bound}
\|P_{m,\lambda}fUU^*fP_{m,\lambda}\|_{L^2(I\times X)\to L^2(I\times X)}
\le 2^{m+1}\lambda.
\end{equation}
Combining this bound with \eqref{eq:left sum} and \eqref{eq:right sum}, we obtain
\begin{multline*}
	\left|
	\left\langle u,	P_{0,\lambda}\SN_{\varepsilon,\lambda} (n,n';P_{m,\lambda}fUU^*fP_{m,\lambda})P_{0,\lambda}v\right\rangle_{L^2(I\times X)} \right|\\
	\le
	2^{m+1}\lambda
	\left[\min(1,\varepsilon2^{3n/4-m})
	\min(1,\varepsilon2^{-n/4-m})\right]^{1/2}
	\|u\|_{L^2(I\times X)}\|v\|_{L^2(I\times X)}\\
	=2\lambda w(n,m)\|u\|_{L^2(I\times X)} \|v\|_{L^2(I\times X)}.
\end{multline*}
Taking the supremum over unit vectors $u$ and $v$ proves \eqref{eq:matching-op}.
\end{proof}

The preceding operator-norm estimate can be converted into a trace estimate by multiplying by the rank $N_{0,\lambda}$ of the reference band. This is sufficient for the lower bands $m\leq0$. For the higher bands $m\geq1$, we need another estimate, which is given in Lemma \ref{lem:N trace bound}.

To obtain it, first fix $\lm_0>0$ and choose a spectral scale $\lambda_*\ge\lm_0$ such that
\begin{equation}
	\sup_{\ze\ge \lm_0} \ze^r N_{0,\ze}	\le 2 \lm_*^r N_{0,\lm_*}.
\end{equation}
This is Lemma \ref{lem:N trace bound} $(i)$.
Once $\lambda_*$ is fixed, the almost-maximality of $\lm_*^r N_{0,\lm_*}$ forces the ranks of all higher bands to decrease geometrically (Lemma \ref{lem:N trace bound} $(ii)$). Combining this rank decay with a Schatten $r/2$-estimate gives the additional factor $2^{-m}$ (Lemma \ref{lem:N trace bound} $(iii)$).

\begin{lemma}\label{lem:N trace bound}
	Let $r:= 2\si + 1$. Let $\lm_0>0$. Then, there exists $\lm_*\ge \lm_0$ such that the following statements hold for any $\ep\in(0,1)$.
	\begin{enumerate}[$(i)$]
		\item It holds that
		\begin{equation}
			\sup_{\ze\ge \lm_0} \ze^r N_{0,\ze}	\le 2 \lm_*^r N_{0,\lm_*};
		\end{equation}
		\item For any $m\ge 1$, it holds that
		\begin{equation}\label{eq:N-m}
			N_{m,\lm_*} \leq2^{1-rm}N_{0,\lm_*};
		\end{equation}
		\item For any $n\ge 0$, $n'\in\{-2,-1,0,1,2\}$, and $m\in\Z$, the following trace bound is true:
		\begin{multline}
			\label{eq:trace-profile}
			\left|\Tr_{L^2(I\times X)}\Big[P_{0,\lm_*}\SN_{\ep,\lm_*}\left(n,n';P_{m,\lm_*} fUU^*f P_{m,\lm_*}\right)P_{0,\lm_*}\Big]\right| \\
			\leq C_\si \lm_* N_{0,\lm_*} 
			\begin{cases}
				w(n,m),&m\leq0,\\
				\min\{w(n,m),2^{-m}\},&m\geq1.
			\end{cases}
		\end{multline}
	\end{enumerate}

\end{lemma}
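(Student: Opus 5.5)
The plan is to get $(i)$ from a finiteness argument plus the definition of a supremum, $(ii)$ from a rescaling identity for the spectral bands, and $(iii)$ from two different trace bounds. Throughout, $V$ is a dyadic step function and $fUU^*f$ is compact (Lemma~\ref{lem:compact}).

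\emph{Step $(i)$.} First I check that $S:=\sup_{\ze\ge\lm_0}\ze^rN_{0,\ze}$ is finite. Since $f$ is bounded and supported in $I_0\times X_0$, the operator $fUU^*f$ is bounded with norm $M:=\|fU\|^2\le B^2\|V\|_{L^\I}|I_0|<\I$. Hence $N_{0,\ze}=0$ for $\ze>M$. For $\ze\in[\lm_0,M]$, $N_{0,\ze}$ is at most the number of eigenvalues of the compact operator $fUU^*f$ that are $\ge\lm_0$, which is finite. So $S\le M^r\rank\II_{[\lm_0,\I)}(fUU^*f)<\I$.
\begin{itemize}
	\item If $S=0$, take $\lm_*=\lm_0$. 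Then $N_{m,\lm_*}=0$ for all $m\ge0$ and $P_{0,\lm_*}=0$, so $(i)$–$(iii)$ hold trivially.
	\item If $S>0$, the definition of the supremum gives some $\lm_*\ge\lm_0$ with $\lm_*^rN_{0,\lm_*}\ge S/2$, which is $(i)$.
\end{itemize}
This choice involves only $fUU^*f$ and $\lm_0$, not $\ep$, so it works simultaneously for all $\ep\in(0,1)$.

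\emph{Step $(ii)$.} This follows from the identity $P_{m,\lm}=P_{0,2^m\lm}$, so $N_{m,\lm_*}=N_{0,2^m\lm_*}$. For $m\ge1$ we have $2^m\lm_*\ge\lm_0$, so $(i)$ gives
\[
(2^m\lm_*)^rN_{m,\lm_*}\le 2\lm_*^rN_{0,\lm_*},
\]
and dividing by $(2^m\lm_*)^r$ is \eqref{eq:N-m}.

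\emph{Step $(iii)$.} Write $A_m:=P_{m,\lm_*}fUU^*fP_{m,\lm_*}$ and $X:=\SN_{\ep,\lm_*}(n,n';A_m)$.

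\emph{First bound, valid for all $m$.} The operator $P_{0,\lm_*}XP_{0,\lm_*}$ has rank at most $N_{0,\lm_*}$, so its trace is bounded by $N_{0,\lm_*}$ times its operator norm. Lemma~\ref{lem:N operator bound} bounds that norm by $2\lm_*w(n,m)$.

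\emph{Second bound, for $m\ge1$: the factor $2^{-m}$.} I use H\"older's inequality in Schatten classes with exponents $r/2$ and its dual $(r/2)'=r/(r-2)$:
\[
|\Tr(P_{0,\lm_*}XP_{0,\lm_*})|\le\|P_{0,\lm_*}\|_{\FS^{r/(r-2)}}\|X\|_{\FS^{r/2}}=N_{0,\lm_*}^{1-2/r}\|X\|_{\FS^{r/2}}.
\]
To control $\|X\|_{\FS^{r/2}}$, I apply Lemma~\ref{lem:matching} with $r/2\ge1$. By \eqref{eq:matching-map}, $X=\sum_{(k',j)}R_{k',j}A_mS_{k',j}$ with
\[
R_{k',j}=Q_{\ep,\lm_*}(k'+n,k',j),\qquad S_{k',j}=Q_{\ep,\lm_*}(k',k'+n,j+n').
\]
The $R_{k',j}$ are pairwise orthogonal: different $k'$ give disjoint sets $E_{k'+n}$, and for fixed $k'$ different $j$ give disjoint time intervals. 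The same holds for the $S_{k',j}$. The families are finite, since only finitely many $E_k$ are nonempty and all lie in $I_0\times X_0$ with $I_0$ bounded. Lemma~\ref{lem:matching} therefore gives $\|X\|_{\FS^{r/2}}\le\|A_m\|_{\FS^{r/2}}$. Since $A_m$ has rank $N_{m,\lm_*}$ and norm at most $2^{m+1}\lm_*$,
\[
\|A_m\|_{\FS^{r/2}}\le 2^{m+1}\lm_*N_{m,\lm_*}^{2/r}.
\]
By $(ii)$ this is at most $2^{m+1}\lm_*\,2^{2/r}2^{-2m}N_{0,\lm_*}^{2/r}$. Multiplying by $N_{0,\lm_*}^{1-2/r}$ gives a trace bound of $C\lm_*N_{0,\lm_*}2^{-m}$.

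Taking the minimum of the two bounds for $m\ge1$ yields \eqref{eq:trace-profile}. I expect the only delicate points to be the finiteness of $S$ in Step $(i)$ and checking the orthogonality and finiteness hypotheses of Lemma~\ref{lem:matching} in Step $(iii)$.
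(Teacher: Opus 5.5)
Your proposal is correct and follows the paper's proof essentially step by step. The shared steps are: the almost-maximizer $\lambda_*$ of $\zeta^r N_{0,\zeta}$; the identity $P_{m,\lambda}=P_{0,2^m\lambda}$ for the rank decay; and the minimum of the operator-norm bound from Lemma~\ref{lem:N operator bound} and the $\FS^{r/2}$--$\FS^{r/(r-2)}$ H\"older bound obtained via Lemma~\ref{lem:matching}. Your additional checks fill in details the paper only calls ``easy to see'': finiteness of the supremum from the boundedness and compactness of $fUU^*f$, the degenerate case $S=0$, and the orthogonality and finiteness hypotheses of Lemma~\ref{lem:matching}.
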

\begin{proof}
	It is easy to see that $(i)$ holds.
	By Lemma \ref{lem:N operator bound}, we have \eqref{eq:trace-profile} for $m\le 0$.
	In the sequel, we consider the case $m\ge 1$.
	By the choice of $\lm_*>0$, 
	\begin{multline}
		(2^m \lm_*)^r \rank\II_{[2^m\lm_*,2^{m+1}\lm_*)}(fUU^* f)
		\le \sup_{\ze\ge \lm_0} \ze^r \rank\II_{[\ze,2\ze)}(fUU^*f) \\
		 \le 2 \lm_*^r \rank \II_{[\lm_*,2\lm_*)}(fUU^* f).
	\end{multline}
	Hence, we obtain \eqref{eq:N-m}.
	We have 
	\begin{multline}\label{eq:two-trace-bounds}
		\left|\Tr_{L^2(I\times X)}\left[P_{0,\lm_*}\SN_{\ep,\lm_*}(n,n';P_{m,\lm_*} fUU^*f P_{m,\lm_*})P_{0,\lm_*}\right]\right|\\
		\le\min\Big(N_{0,\lm_*} \|P_{0,\lm_*}\SN_{\ep,\lm_*}(n,n';P_{m,\lm_*} fUU^*f P_{m,\lm_*})P_{0,\lm_*}\|_{L^2(I\times X) \to L^2(I\times X)}, \\
		N_{0,\lm_*}^{1-2/r}\|\SN_{\ep,\lm_*}(n,n';P_{m,\lm_*} fUU^*f P_{m,\lm_*})\|_{\FS^{r/2}(L^2(I\times X))}\Big).
	\end{multline}
	On the one hand, by Lemma \ref{lem:N operator bound},
	\begin{multline}\label{eq:direct estimate}
		N_{0,\lm_*} \|P_{0,\lm_*}\SN_{\ep,\lm_*}(n,n';P_{m,\lm_*} fUU^*f P_{m,\lm_*})P_{0,\lm_*}\|_{L^2(I\times X) \to L^2(I\times X)} \\
		\le 2\lm_* N_{0,\lm_*} w(n,m)
	\end{multline}
	On the other hand, Lemma \ref{lem:matching} and \eqref{eq:operator bound} imply
	\begin{align}
		&\left\|\SN_{\ep,\lm_*}(n,n';P_{m,\lm_*} fUU^*f P_{m,\lm_*})\right\|_{\FS^{r/2}(L^2(I\times X))} \\
		&\quad \le \left\|P_{m,\lm_*}fUU^*f P_{m,\lm_*}\right\|_{\FS^{r/2}(L^2(I\times X))} \\
		&\quad \le \|P_{m,\lm_*}\|_{\FS^{r/2}(L^2(I\times X))} \left\|P_{m,\lm_*}fUU^*f P_{m,\lm_*}\right\|_{L^2(I\times X)\to L^2(I\times X)}
		\le 2^{m+1} \lm_* N_{m,\lm_*}^{2/r}.
	\end{align}
	Hence, by \eqref{eq:N-m},
	\begin{equation}
	\left\|P_{0,\lm_*} \SN_{\ep,\lm_*}(n,n';P_{m,\lm_*} fUU^*f P_{m,\lm_*})P_{0,\lm_*}\right\|_{\FS^{r/2}(L^2(I\times X))}
	\le C_\si \lm_* N_{0,\lm_*}^{2/r} 2^{-m}.
	\end{equation} 
	Together with \eqref{eq:direct estimate}, this gives \eqref{eq:trace-profile}.
\end{proof}

\section{Proofs of the main theorem and corollaries}
The preceding section established separate estimates for the near and far parts of $fUU^*f$.  We now combine those estimates and complete the proof. No new decomposition is introduced in this section.

The argument has five stages.
We first explain the equivalence of the restricted-type Strichartz estimate and the weak-type Schatten estimate.
We then prove the eigenvalue-counting estimate for dyadic step functions.
After that, we remove the dyadic and positivity assumptions on $V$.
In the last two subsections, we give proofs of Corollaries \ref{cor:abstract} and \ref{cor:single-function}.

\subsection{Duality principle}\label{subsec:equivalence}
By the duality principle, estimates \eqref{eq:endpoint} and \eqref{eq:weak main} are equivalent.
This is a basic fact, so a proof can be found, for example, in \cite{FrankLewinLiebSeiringer2014, FrankSabinRestriction2017}.
However, for completeness of this paper, we give its proof here.
Assume that the weak-type Schatten estimate \eqref{eq:weak main} is true.
Fix a sequence $\nu=(\nu_j)_j \in \ell^{p,1}$ and an orthonormal system $(\ph_j)_j$ in $H$.
Define a compact operator $\ga_0$ on $H$ by
\[
\gamma_0 = \sum_j \nu_j |\ph_j\rg \lg \ph_j|,
\]
where $|\ph\rg \lg \ph|$ is the standard bra-ket notation.
Then, a direct calculation shows that
\begin{align}
	\int_I \int_X V(t,x)\sum_j \nu_j |U(t)\ph_j(x)|^2 d\mu(x)dt
	= \Tr_H \Dk{\int_I U(t)^*V(t)U(t)dt \ga_0}.
\end{align}
Since $r'=p$ and $(r/2)' = q$, the weak-type Schatten bound \eqref{eq:weak main} implies
\begin{multline}
	\bigg|\int_I \int_X V(t,x)\sum_j \nu_j |U(t)\ph_j(x)|^2 d\mu(x)dt\bigg| \\
	\le \No{\int_I U(t)^*V(t)U(t)dt}_{\FS^{r,\I}(H)} \|\ga_0\|_{\FS^{p,1}(H)} \\
	\le C \|V\|_{L^{p'}(I;L^{q'}(X))} \|\ga_0\|_{\FS^{p,1}(H)}, \label{eq:duality estimate}
\end{multline}
where we used 
\begin{equation}\label{eq:Schatten Lorentz Holder}
	|\Tr_H(AB)| \le \|A\|_{\FS^{r,\I}(H)} \|B\|_{\FS^{r',1}(H)}
\end{equation}
in the first inequality.
We can check \eqref{eq:Schatten Lorentz Holder} as follows: Let $(a_j)_{j\ge 1}$ and $(b_j)_{j\ge 1}$ be singular values of $A$ and $B$, respectively.
Then, we have
\begin{equation}
	|\Tr_H (AB)| \le \sum_{j\ge 1} a_j b_j \le \bigg(\sup_{j\ge 1} j^{1/r} a_j\bigg) \bigg(\sum_{j\ge 1} j^{-1+1/r'} b_j \bigg) = \|A\|_{\FS^{r,\I}(H)} \|B\|_{\FS^{r',1}(H)},
\end{equation}
where we used \cite[Theorem 1.15]{Simon2005} in the first inequality.
As a consequence, the duality argument with \eqref{eq:duality estimate} implies \eqref{eq:endpoint}.
Similarly, we can prove \eqref{eq:weak main} by assuming \eqref{eq:endpoint}.

\subsection{Eigenvalue count}\label{subsec:counting}
In Section \ref{subsec:near}, we gave a trace estimate for each pair of indices $(n,m)$.
The index $n$ measures the difference between two dyadic values of $V$, and the index $m$ measures the position of a spectral band relative to the reference band. The remaining index $n'$ has only five possible values, so it contributes only a fixed constant.

The next lemma performs the numerical summation over $n$ and $m$.
This is the step that converts the separate block estimates from Lemma \ref{lem:N trace bound} $(iii)$  into a small bound for the whole near trace.
\begin{lemma}\label{lem:numerical}
Let $w(n,m)$ be as in \eqref{eq:w-nm}.  If
$0<\ep <1$, then
\begin{equation}\label{eq:numerical}
 \sum_{n\geq0}\sum_{m\leq0}w(n,m)
 +\sum_{n\geq0}\sum_{m\geq1}
 \min(w(n,m),2^{-m})
 \leq C\ep^{1/4}.
\end{equation}
\end{lemma}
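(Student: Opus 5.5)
We write $w(n,m)=2^m\sqrt{a\,b}$ with $a=\min(1,\ep 2^{3n/4-m})$ and $b=\min(1,\ep 2^{-n/4-m})$. The estimate then reduces to elementary geometric sums. We bound each minimum by either of its two entries, choosing whichever gives decay in the summation variable. Throughout we use $0<\ep<1$, so that $\ep\le\ep^{1/4}$ and $\ep^{1/2}\le \ep^{1/4}$.

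\emph{First sum ($m\le 0$).} For $m\le0$, $n\ge0$ we use $a\le 1$ and $b\le \ep 2^{-n/4-m}$. This gives
\[
w(n,m)\le 2^m\,\ep^{1/2}2^{-n/8-m/2}=\ep^{1/2}2^{m/2}2^{-n/8}.
\]
Summing over $m\le 0$ and $n\ge 0$ gives a convergent double geometric series, so the first sum is bounded by $C\ep^{1/2}\le C\ep^{1/4}$. Note that bounding by $a\le\ep2^{3n/4-m}$ would not be summable in $n$; we must use the $b$-factor to get decay in $n$.

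\emph{Second sum ($m\ge1$).} Here we split according to the size of $n$ relative to $m$. The main point is the interplay between the growth of $a$ in $n$ and the decay of $b$.

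In the region $3n/4\le m$, we use
\[
w(n,m)\le 2^m\sqrt{\ep 2^{3n/4-m}\cdot \ep 2^{-n/4-m}}=\ep\,2^{n/4}.
\]
Combined with the alternative bound $2^{-m}$, we get $\min(w,2^{-m})\le \min(\ep 2^{n/4},2^{-m})$.

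In the region $3n/4>m$, we use $a\le 1$, which gives
\[
w(n,m)\le \ep^{1/2}2^{m/2}2^{-n/8}.
\]
Since $n>4m/3$, we have $2^{m/2}2^{-n/8}\le 2^{-n/8+3n/8}$, which grows in $n$ and is useless. So in this region we simply use the bound $2^{-m}$ together with the $\ep$-gain from the $n$-sum, as follows. For fixed $n$, sum over $m\ge 1$ the quantity $\min(\ep^{1/2}2^{m/2-n/8},2^{-m})$. The two entries balance at $2^{3m/2}\approx \ep^{-1/2}2^{n/8}$. The sum is dominated by the crossover value
\[
2^{-m_0}\approx \ep^{1/3}2^{-n/12},
\]
which is summable in $n$ and yields $C\ep^{1/3}$.

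Similarly, in the first region, for fixed $m$ we have $\sum_{n\le 4m/3}\min(\ep2^{n/4},2^{-m})\lesssim \min(\ep 2^{m/3},\, m2^{-m})$. Summing over $m$, with crossover at $2^{4m/3}\approx\ep^{-1}$, gives $C\ep^{1/4}\log(1/\ep)$. To avoid the logarithm, we refine: for $\ep 2^{n/4}\le 2^{-m}$ we sum over $n$ first as a geometric series in $n$, which gives exactly $\lesssim 2^{-m}$ at the top. For each $m$, the sum over $n$ is then $\lesssim\min(\ep2^{m/3},2^{-m})$. Summing over $m$ gives $\lesssim \ep^{1/4}$, which is the source of the exponent $1/4$.

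The main obstacle is this bookkeeping in the $m\ge1$ sum. One has to choose, in each region of $(n,m)$, which of the four bounds to use so that the result is a double series dominated by its crossover term, with no logarithmic loss. I expect the worst case to be $\ep^{1/4}$, coming from the region $n\approx 4m/3$ with $2^m\approx\ep^{-3/4}$, where $\ep2^{n/4}\approx2^{-m}$.
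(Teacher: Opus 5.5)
Your conclusion is right, and your treatment of the $m\le 0$ sum is exactly the paper's. For $m\ge 1$, however, the region split is unnecessary, and the half dealing with $3n/4\le m$ contains errors.

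The fixed-$n$ crossover argument you give for $3n/4>m$ never uses that restriction. The bound $\min(w(n,m),2^{-m})\le\min(\ep^{1/2}2^{m/2-n/8},2^{-m})$ comes only from $a\le 1$, which holds for all $n\ge0$, $m\ge1$. You also already sum over all $m\ge1$. So that argument alone bounds the whole second sum by $C\ep^{1/3}$, and the proof is complete without the other region.

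As written, the region $3n/4\le m$ is wrong in two places.
\begin{itemize}
\item Your arithmetic at the crossover is off. At $2^m\approx\ep^{-3/4}$ the terms have size $2^{-m}\approx\ep^{3/4}$, not $\ep^{1/4}$. So your crude bound actually gives $C\ep^{3/4}(1+\log(1/\ep))\le C\ep^{1/4}$, and no refinement is needed.
\item The refinement itself is false. For fixed $m$, every $n\le 4m/3$ with $\ep 2^{n/4}\ge 2^{-m}$ contributes a full $2^{-m}$. There can be about $m$ such $n$: all of $0\le n\le 4m/3$ once $2^{-m}\le\ep$. So the $n$-sum is not $\lesssim\min(\ep 2^{m/3},2^{-m})$ uniformly in $m$.
\end{itemize}
Your predicted worst case is also misplaced. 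The region $3n/4\le m$ contributes only $O(\ep^{3/4})$. The largest terms, of size $\ep^{1/2}$, sit near $2^m\approx\ep^{-1/2}$, $2^n\approx\ep^{-2}$, inside the other region. This is consistent with the paper's remark that the bound can be improved to $\ep^{1/2}$.

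For comparison, the paper also discards the factor $a$ and uses the same two-entry bound, but sums it via the geometric mean:
$\min(w(n,m),2^{-m})\le(\ep^{1/2}2^{m/2-n/8}\cdot 2^{-m})^{1/2}=\ep^{1/4}2^{-m/4-n/16}$.
This factorizes into two geometric series and gives $C\ep^{1/4}$ in one line, with no case split. Your crossover summation of the same quantity is slightly sharper ($\ep^{1/3}$) at the cost of a short case analysis. Either suffices, since only smallness as $\ep\to 0$ is used later.
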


\begin{proof}
	We first consider \(m\le 0\). By the definition of \(w(n,m)\),
	\[
	\begin{aligned}
		w(n,m)
		&=2^m\left[ \min(1,\ep 2^{3n/4-m})
		            \min(1,\ep 2^{-n/4-m})\right]^{1/2} \\
		&\le 2^m (\ep 2^{-n/4-m})^{1/2}
		= \ep^{1/2}2^{m/2-n/8}.
	\end{aligned}
	\]
	Therefore,
	\begin{equation}\label{eq:n+m-}
	\sum_{n\ge 0}\sum_{m\le 0}w(n,m)
	\le \ep^{1/2}
	\bigg(\sum_{m\le 0}2^{m/2}\bigg)
	\bigg(\sum_{n\ge 0}2^{-n/8}\bigg)
	\le C\varepsilon^{1/2}.
	\end{equation}	
	We next consider \(m\ge 1\).
	Since $\ep 2^{-n/4-m}<1$, we have
	\[
	w(n,m)=\min(\ep 2^{n/4},\ep^{1/2}2^{m/2-n/8}) \le \ep^{1/2}2^{m/2-n/8}
	\]
	and therefore
	\[
	 \min(w(n,m),2^{-m}) \le \ep^{1/4} 2^{-m/4} 2^{-n/16}.
	\]
	Therefore,
	\begin{equation}\label{eq:n+m+}
	\sum_{n\ge 0}\sum_{m\ge 1}\min(w(n,m),2^{-m})
	\le \ep^{1/4}
	\bigg(\sum_{m\ge 1}2^{-m/4}\bigg)
	\bigg(\sum_{n\ge 0}2^{-n/16}\bigg)
	\le C\varepsilon^{1/4}.
	\end{equation}
	Collecting \eqref{eq:n+m-} and \eqref{eq:n+m+}, we complete the proof.
\end{proof}

\begin{remark}
	If we calculate more carefully, we can improve $\ep^{1/4}$ in the RHS of \eqref{eq:numerical} to $\ep^{1/2}$.
	However, the  exact power of $\ep$ in Lemma~\ref{lem:numerical} is not important for the rest of the proof.
	What matters is that the right-hand side uniformly tends to zero as $\ep\to 0$.
	In the proof of Proposition \ref{prop:count}, we choose $\ep$ once, depending only on $\sigma$, so that
	\[
	C_\si \ep^{1/4} \le \frac12.
	\]
	After this choice, $\ep$ remains fixed.
	In particular, there is no limiting argument in which $\ep$ is sent to zero.
\end{remark}

The next proposition is the main conclusion for dyadic step functions $V=f^2$.
Indeed, Proposition \ref{prop:count} is precisely the weak-type Schatten bound at the dyadic level.

\begin{proposition}[Eigenvalue count]\label{prop:count}
For every dyadic step function $V$ and for every $\lm>0$,
the eigenvalue-counting estimate
\begin{align} \label{eq:count}
 \rank\II_{[\lambda,\infty)}(fUU^*f)
  \leq \frac{C_\sigma C_0^2B^{2(r-2)}}{\lm^r} \|V\|_{L^r(I;L^{r/2}(X))}^r
\end{align}
holds, where $f=\sqrt{V}$.
\end{proposition}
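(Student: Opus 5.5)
The plan is to prove the equivalent statement $\sup_{\zeta\ge\lm_0}\zeta^r N_{0,\zeta}\lesssim \|V\|^r_{L^r(I;L^{r/2}(X))}$ for every $\lm_0>0$. Summing the dyadic bands $[2^m\lm,2^{m+1}\lm)$, $m\ge0$, then yields \eqref{eq:count} with a geometric factor $\sum_m 2^{-rm}$.

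\textbf{Choice of $\ep$ and $\lm_*$.} First fix $\ep\in(0,1)$ depending only on $\sigma$. Let $C_\si$ be the constant of Lemma \ref{lem:N trace bound} $(iii)$ and $C$ that of Lemma \ref{lem:numerical}. Choose $\ep$ so that
\[
2\cdot 5\, C_\si C\ep^{1/4}\le \tfrac12 ,
\]
where the factor $5$ accounts for $n'\in\{-2,\dots,2\}$ and the factor $2$ for the $\Re$ term in \eqref{eq:near-matching}. Given $\lm_0$, take $\lm_*$ from Lemma \ref{lem:N trace bound}. We may assume $N_{0,\lm_*}>0$, since otherwise $(i)$ gives $\sup_{\zeta\ge\lm_0}\zeta^rN_{0,\zeta}=0$.

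\textbf{Lower bound and near/far split.} Since $P_{0,\lm_*}fUU^*fP_{0,\lm_*}\ge\lm_*P_{0,\lm_*}$, we have
\[
\lm_*N_{0,\lm_*}\le \Tr(P_{0,\lm_*}fUU^*fP_{0,\lm_*}).
\]
Split $fUU^*f=\SN_{\ep,\lm_*}+\SF_{\ep,\lm_*}$ as in \eqref{eq:near far decomposition}.

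\textbf{Near part.} Insert the spectral series \eqref{eq:spectral-series} inside $\SN_{\ep,\lm_*}(n,n';\cdot)$. This map is linear, and the series converges in operator norm. Everything is sandwiched by the finite-rank $P_{0,\lm_*}$, and $\SN(n,n';\cdot)$ is bounded on operator norm, since it is a sum of block-diagonal pieces with pairwise orthogonal projections. Hence the trace may be exchanged with the sums over $m$, $n$, $n'$. Using \eqref{eq:near-matching}, Lemma \ref{lem:N trace bound}$(iii)$ and Lemma \ref{lem:numerical} gives
\[
|\Tr(P_{0,\lm_*}\SN_{\ep,\lm_*}P_{0,\lm_*})|\le \tfrac12\lm_*N_{0,\lm_*}.
\]
One technical check is needed: summing over $n\ge0$ in \eqref{eq:near-matching} must converge. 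It does, because only finitely many $E_k$ are nonempty, so the sum over $n$ is finite.

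\textbf{Far part and conclusion.} For the far part, apply Cauchy--Schwarz for the Hilbert--Schmidt pairing:
\[
|\Tr(P_{0,\lm_*}\SF P_{0,\lm_*})|\le \|P_{0,\lm_*}\|_{\FS^2}\,\|\SF\|_{\FS^2}=N_{0,\lm_*}^{1/2}\|\SF\|_{\FS^2}.
\]
Combining this with the near bound gives
\[
\lm_*N_{0,\lm_*}\le 2N_{0,\lm_*}^{1/2}\|\SF_{\ep,\lm_*}\|_{\FS^2},
\]
so $\lm_*^2N_{0,\lm_*}\le 4\|\SF\|_{\FS^2}^2$. Lemma \ref{lem:far} then yields
\[
\lm_*^rN_{0,\lm_*}\le C_\si C_0^2B^{2(r-2)}\ep^{2-r}\|V\|^r_{L^r(I;L^{r/2}(X))}.
\]
By Lemma \ref{lem:N trace bound}$(i)$, this bounds $\zeta^rN_{0,\zeta}$ for all $\zeta\ge\lm_0$. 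Since $\lm_0$ is arbitrary, summing over bands above $\lm$ gives \eqref{eq:count}.

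The main obstacle I expect is justifying the trace manipulations. Specifically, one must interchange the trace with the infinite spectral series and with the $n$-sum, and verify that $\Re$ combined with the symmetry $\ell(k,k')=\ell(k',k)$ gives exactly the stated decomposition. The analytic core is already contained in the lemmas.
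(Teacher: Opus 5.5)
Your proposal is correct and follows the paper's proof. The steps match: choose $\lm_*$ so that $\zeta^r N_{0,\zeta}$ is nearly maximal there; absorb the near-part trace using Lemma~\ref{lem:N trace bound}$(iii)$ and Lemma~\ref{lem:numerical}; bound the far-part trace by $N_{0,\lm_*}^{1/2}\|\SF_{\ep,\lm_*}\|_{\FS^2}$ and apply Lemma~\ref{lem:far}; then sum over the dyadic bands. Your extra justifications only make explicit what the paper leaves implicit: the operator-norm continuity of $A\mapsto \Tr(P_{0,\lm_*}AP_{0,\lm_*})$, the finiteness of the $n$-sum, and the explicit factors $5$ and $2$ in the choice of $\ep$.
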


\begin{proof}
We split $fUU^*f$ into the near and far parts according to \eqref{eq:near far decomposition}.	
Fix $\lm_0>0$, and choose $\lm_*\ge \lm_0$ as in Lemma \ref{lem:N trace bound}.
Let $n\geq0$ and $n'\in\{-2,-1,0,1,2\}$.
Then, by Lemma \ref{lem:N trace bound}, we have
	\begin{multline*}
		\bigg|\Tr_{L^2(I\times X)}\Big[P_{0,\lm_*}\SN_{\ep,\lm_*}(n,n';fUU^*f)P_{0,\lm_*}\Big]\bigg|\\
		= \bigg|\Tr_{L^2(I\times X)}\Big[P_{0,\lm_*}\SN_{\ep,\lm_*}(n,n';\sum_{m\in\Z}P_{m,\lm_*} fUU^*fP_{m,\lm_*})P_{0,\lm_*}\Big]\bigg|\\
		\le
		\sum_{m\in\Z}
		\left|
		\Tr_{L^2(I\times X)}\Big[P_{0,\lm_*}\SN_{\ep,\lm_*}(n,n';P_{m,\lm_*} fUU^*fP_{m,\lm_*})P_{0,\lm_*}\Big]
		\right|\\
		\leq C_\si\lm_* N_{0,\lm_*} 
		\bigg(
		\sum_{m\leq0}w(n,m)
		+\sum_{m\ge 1}\min(w(n,m),2^{-m})
		\bigg).
	\end{multline*}
	Hence, Lemma \ref{lem:numerical} and the decomposition \eqref{eq:near-matching} now give
	\begin{equation}\label{eq:near-small}
		|\Tr_{L^2(I\times X)}[P_{0,\lm_*}\SN_{\ep,\lm_*} P_{0,\lm_*}]| \le C_\sigma \ep^{1/4} \lm_* N_{0,\lm_*} \le \tw \lm_* N_{0,\lm_*},
	\end{equation}
    where we chose $0<\ep\le1/2$, depending only on $\sigma$, so that
	\[
	C_\si \ep^{1/4}\le \frac12.
	\]
	We keep this value of $\ep$ in the rest of the proof.
	On $\operatorname{Ran}P_{0,\lm_*}$, all eigenvalues of $fUU^*f$ belong to $[\lm_*,2\lm_*)$.
	Hence
	\[
	\Tr_{L^2(I\times X)}(P_{0,\lm_*} fUU^*fP_{0,\lm_*})\geq\lm_* N_{0,\lm_*} .
	\]
		Then, by \eqref{eq:near-small},
	\begin{equation*}
		\frac12 \lm_* N_{0,\lm_*} \le |\Tr_{L^2(I\times X)}[P_{0,\lm_*}\SF_{\ep,\lm_*} P_{0,\lm_*}]|.
	\end{equation*}
	Since
	\begin{align*}
		|\Tr_{L^2(I\times X)}[P_{0,\lm_*}\SF_{\ep,\lm_*} P_{0,\lm_*}]|
		&\le \|\SF_{\ep,\lm_*}\|_{\FS^2(L^2(I\times X))} \|P_{0,\lm_*}\|_{\FS^2(L^2(I\times X))} \\
		&=\|\SF_{\ep,\lm_*}\|_{\FS^2(L^2(I\times X))} N_{0,\lm_*}^{1/2},
	\end{align*}
	we obtain
	\begin{equation}
	N_{0,\lm_*} \le \frac{2}{\lm_*}|\Tr_{L^2(I\times X)}[P_{0,\lm_*}\SF_{\ep,\lm_*} P_{0,\lm_*}]|
	\le \frac{2}{\lm_*}\|\SF_{\ep,\lm_*}\|_{\FS^2(L^2(I\times X))} N_{0,\lm_*}^{1/2}.
	\end{equation}
	When $N_{0,\lm_*} >0$, by Lemma \ref{lem:far}, 
	\begin{align}\label{eq:N-zero}
		N_{0,\lm_*} 
		\leq \frac{4}{\lm_*^2} \|\SF_{\ep,\lm_*}\|_{\FS^2(L^2(I\times X))}^2
		\leq \frac{C_\sigma C_0^2B^{2(r-2)}}{\lm_*^r}
		\|V\|_{L^r(I;L^{r/2}(X))}^r.
	\end{align}
	The factor $\varepsilon^{2-r}$ is absorbed into $C_\sigma$, because $\varepsilon$ is a constant depending only on $\si$.
	When $N_{0,\lm_*}=0$, the estimate \eqref{eq:N-zero} is clear.
	Hence, by Lemma \ref{lem:N trace bound},
	\begin{equation}
		\sup_{\ze\ge\lm_0} \ze^r N_{0,\ze}
		\le 2\lm_*^r N_{0,\lm_*}
		\le C_\sigma C_0^2B^{2(r-2)}\|V\|_{L^r(I;L^{r/2}(X))}^r,
	\end{equation}
	which implies
	\begin{equation}\label{eq:crucial}
		N_{0,\ze}
		\le \frac{C_\sigma C_0^2B^{2(r-2)}}{\ze^r}\|V\|_{L^r(I;L^{r/2}(X))}^r,
	\end{equation}
	for all $\ze>\lm_0$.
	Since $\lm_0>0$ was arbitrary, \eqref{eq:crucial} holds for all $\ze>0$.
	Finally, the intervals
	$[2^j\lambda,2^{j+1}\lambda)$ for $j\geq0$ are pairwise disjoint and their union is $[\lm,\infty)$.  Therefore
	\begin{align*}
		\rank\II_{[\lambda,\infty)}(fUU^*f)
		&=\sum_{j\ge0}
		\rank\II_{[2^j\lambda,2^{j+1}\lambda)}(fUU^*f) \\
		&\le \sum_{j\ge0} \frac{C_\sigma C_0^2B^{2(r-2)}}{(2^j\lm)^r}
		\|V\|_{L^r(I;L^{r/2}(X))}^r \\
		&\le \frac{C_\sigma C_0^2B^{2(r-2)}}{\lm^r}
		\|V\|_{L^r(I;L^{r/2}(X))}^r,
	\end{align*}
	which proves \eqref{eq:count}.
\end{proof}

\subsection{Proof of Theorem \ref{thm:weak}}\label{sec:main-proofs}
Now let $V\geq0$ be any step function.
Then, there exists a dyadic step function $\wt{V}$ such that
\begin{equation}\label{eq:dyadic-majorant}
\tw\wt{V}\le V \le \wt{V}
\end{equation}
holds pointwise.
Hence, as an operator inequality on $H$, we have
\begin{equation}\label{eq:VV}
 0\leq U^* V U \le U^* \wt{V} U.
\end{equation}
By Lemma \ref{lem:AB}, Definition \ref{def:Schatten Lorentz}, and \eqref{eq:VV}, 
\begin{align}\label{eq:positive-step}
	\|U^* V U\|_{\FS^{r,\I}(H)} \le \|U^* \wt{V} U\|_{\FS^{r,\I}(H)}.
\end{align}
Since $\wt{V}$ is a dyadic step function, by Proposition \ref{prop:count} and the argument in Section \ref{subsec:strategy}, 
\begin{equation}\label{eq:V estimate}
	\|U^* \wt{V} U\|_{\FS^{r,\I}(H)} \le C_\sigma C_0^{2/r}B^{2(1-2/r)} \|\wt{V}\|_{L^r(I;L^{r/2}(X))}.
\end{equation}
Combining \eqref{eq:positive-step}, \eqref{eq:V estimate}, and \eqref{eq:dyadic-majorant}, we conclude that
\begin{equation}\label{eq:goal}
	\|U^* V U\|_{\FS^{r,\I}(H)}  \le C_\sigma C_0^{2/r}B^{2(1-2/r)} \|V\|_{L^r(I;L^{r/2}(X))}.
\end{equation}

By a standard argument, the estimate \eqref{eq:goal} holds for every complex-valued step function $V$.
By the density argument, we obtain \eqref{eq:goal} for all $V \in L^r(I;L^{r/2}(X))$.

\subsection{Proof of Corollary \ref{cor:abstract}}\label{subsec:abstract corollary}
The proof is based on the real interpolation method.
The following lemma is important.
\begin{lemma}[See \cite{Cwikel1974, LionsPeetre1964}]\label{lem:interpolation}
	Let $p_0,p_1\in [1,\I]$, $\th\in (0,1)$, and $1/p = (1-\th)/p_0 + \th/p_1$.
	Then, we have
	\begin{equation}
		(L^{p_0}_t(I;B_0), L^{p_1}_t(I;B_1))_{\th,p} = L^p_t(I;(B_0,B_1)_{\th,p}).
	\end{equation}
\end{lemma}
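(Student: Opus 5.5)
The plan is to prove Lemma \ref{lem:interpolation} by computing a modified $K$-functional pointwise in $t$ and integrating, using the power theorem of Lions--Peetre to handle $p_0\neq p_1$. The key feature is that the second interpolation index equals the integrability index $p$ (the ``diagonal'' case); this is what allows Fubini to exchange the $ds/s$ integral with the $dt$ integral. Throughout, $A_i:=L^{p_i}_t(I;B_i)$, and we first assume $p_0,p_1<\I$.

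\emph{Step 1 (equal exponents).} Suppose $p_0=p_1=p$. Use the $L^p$-variant of the $K$-functional,
\[
K_p(s,a;A_0,A_1):=\inf_{a=a_0+a_1}\bigl(\|a_0\|_{A_0}^p+s^p\|a_1\|_{A_1}^p\bigr)^{1/p},
\]
which is equivalent to $K$ up to constants depending only on $p$. The claim is that
\[
K_p(s,a;A_0,A_1)^p=\int_I K_p(s,a(t);B_0,B_1)^p\,dt .
\]
The inequality ``$\ge$'' is immediate. For ``$\le$'', I would pick, for a.e.\ $t$, a decomposition $a(t)=a_0(t)+a_1(t)$ that is optimal up to a factor $1+\delta$, arranged so that $t\mapsto a_i(t)$ is strongly measurable. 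To do this, first treat simple functions $a$, where one decomposition per value suffices, and then pass to general $a$ by density together with continuity of $K$. Multiplying by $s^{-\th p}$, integrating in $ds/s$ and applying Fubini gives
\[
\|a\|_{(A_0,A_1)_{\th,p}}^p\simeq\int_I\|a(t)\|_{(B_0,B_1)_{\th,p}}^p\,dt .
\]

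\emph{Step 2 (general exponents).} Apply the same idea to the quasi-normed couple $(A_0^{p_0},A_1^{p_1})$, where $\|\cdot\|_{A_i^{p_i}}:=\|\cdot\|_{A_i}^{p_i}$. Its $K$-functional is
\[
\inf\bigl(\|a_0\|_{A_0}^{p_0}+s\|a_1\|_{A_1}^{p_1}\bigr),
\]
and this splits exactly as $\int_I K(s,a(t);B_0^{p_0},B_1^{p_1})\,dt$, with the same measurable-selection argument as in Step 1. Hence, by Fubini,
\[
\|a\|_{(A_0^{p_0},A_1^{p_1})_{\eta,1}}=\int_I\|a(t)\|_{(B_0^{p_0},B_1^{p_1})_{\eta,1}}\,dt .
\]
The power theorem of Lions--Peetre then gives $(X_0^{p_0},X_1^{p_1})_{\eta,1}=\bigl((X_0,X_1)_{\th,p}\bigr)^{p}$ with equivalent quasi-norms. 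The parameters are related by $\eta=\th p/p_1$, and $1/p=(1-\th)/p_0+\th/p_1$ is exactly the condition that makes the secondary index equal to $p$. Applying this identity on both sides yields
\[
\|a\|_{(A_0,A_1)_{\th,p}}^p\simeq\int_I\|a(t)\|_{(B_0,B_1)_{\th,p}}^p\,dt,
\]
which is the claim.

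\emph{Step 3 (endpoints) and the main obstacle.} If $p_0=\I$ or $p_1=\I$, the power trick degenerates. In that case I would either invoke Cwikel's argument directly, or use reiteration: write $L^{\I}(I;B_i)$ as one endpoint and use the fact that the finite-exponent case already identifies the intermediate spaces. Only the cases actually needed in Corollary \ref{cor:abstract} have to be covered. I expect the real difficulties to be the power theorem itself, which I would quote from Lions--Peetre and Bergh--L\"ofstr\"om rather than reprove, and the measurability of near-optimal pointwise decompositions in Steps 1 and 2, which is handled by the reduction to simple functions.
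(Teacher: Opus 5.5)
Your Steps 1--2 reproduce the classical Lions--Peetre argument: split the $K$-functional of the powered couple $(A_0^{p_0},A_1^{p_1})$ pointwise in $t$, apply Tonelli, then use the power theorem with $\eta=\theta p/p_1$. For $1\le p_0,p_1<\infty$ this is correct. The paper itself gives no proof and only cites Lions--Peetre and Cwikel.

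The gap is Step~3, and it is not a peripheral case. In the proof of Corollary~\ref{cor:abstract} the lemma is applied to the couple $\bigl(L^{p_A}(I;L^{q_A}(X)),\,L^\infty(I;L^1(X))\bigr)$, i.e.\ with $p_1=\infty$. This is how the trivial bound $T_\Phi:\ell^1\to L^\infty(I;L^1(X))$ enters. So the case you leave open is one of ``the cases actually needed''.

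Neither remedy you name closes it. Reiteration only helps once some intermediate spaces of the couple $(L^{p_0}(I;B_0),L^\infty(I;B_1))$ itself have been identified by independent means. Your Steps~1--2 concern different couples and supply no such input. Invoking Cwikel's argument is just the citation the paper already makes.

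What is missing is the limiting form of the power trick, namely Peetre's approximation functional
\[
E(\lambda,a;A_0,A_1):=\inf\bigl\{\|a-b\|_{A_0}:\ \|b\|_{A_1}\le\lambda\bigr\},\qquad K(s,a;A_0,A_1)=\inf_{\lambda\ge 0}\bigl(E(\lambda,a;A_0,A_1)+s\lambda\bigr).
\]
For $A_0=L^{p_0}(I;B_0)$ and $A_1=L^\infty(I;B_1)$, the constraint $\|b\|_{L^\infty(I;B_1)}\le\lambda$ is pointwise in $t$. A measurable-selection argument as in your Step~1 then gives $E(\lambda,a;A_0,A_1)^{p_0}=\int_I E(\lambda,a(t);B_0,B_1)^{p_0}\,dt$. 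Combine this with the general equivalence, valid for every Banach couple (cf.\ Bergh--L\"ofstr\"om, \S 7.1),
\[
\|x\|_{(A_0,A_1)_{\theta,q}}^{q}\simeq\int_0^\infty\bigl(\lambda^{\theta/(1-\theta)}E(\lambda,x;A_0,A_1)\bigr)^{(1-\theta)q}\,\frac{d\lambda}{\lambda}.
\]
Take $q=p$. Since $1/p=(1-\theta)/p_0$, the secondary exponent $(1-\theta)p$ is exactly $p_0$, so Tonelli yields $\|a\|^p_{(A_0,A_1)_{\theta,p}}\simeq\int_I\|a(t)\|^p_{(B_0,B_1)_{\theta,p}}\,dt$. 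The case $p_0=\infty>p_1$ follows from $(A_0,A_1)_{\theta,p}=(A_1,A_0)_{1-\theta,p}$.

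Excluding $p_0=p_1=\infty$ is legitimate, and in fact necessary for Bochner spaces. Take $a(t)=g(\cdot-t)$ with $g(x)=|x|^{\theta-1}\eta(x)$, where $\eta\in C_c^\infty(\mathbb{R})$ and $\eta=1$ near $0$. This $a$ lies in $(L^\infty(I;L^1),L^\infty(I;L^\infty))_{\theta,\infty}$, but $t\mapsto a(t)$ is not strongly measurable into $(L^1,L^\infty)_{\theta,\infty}=L^{1/(1-\theta),\infty}$.
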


	Fix an orthonormal system $\Phi=(\ph_j)_j$ in $H$.
	For every sequence $\nu = (\nu_j)_j$, define
	\[
	T_\Phi\nu:=\sum_j \nu_j|U(t)\ph_j|^2.
	\]

	We first prove the assertion on $(B,A)$.
	Let $p_A := (2\si+1)/(2\si)$ and $q_A := (2\si+1)/(2\si-1)$.
	Let $(1/q,1/p)\in(F,A)$ and put $\be:=2q/(q+1)$.
	Choose $\th\in(0,1)$ so that
	\[
	\frac1p=\frac{1-\th}{p_A} + \frac{\th}{\I},
	\quad
	\frac1q=\frac{1-\th}{q_A}+\frac{\th}{1}.
	\]
	Then,
	\[
	\frac1\be = \frac{1-\th}{p_A} + \frac{\th}{1}.
	\]
	Theorem \ref{thm:weak} and the estimate \eqref{eq:trivial} give the boundedness of
	\begin{align}
		&T_\Phi:\ell^{p_A,1}\to L^{p_A}(I;L^{q_A}(X)), \label{eq:endpoint boundedness} \\
		&T_\Phi:\ell^1\to L^\I(I;L^1(X)). \label{eq:infty 1}
	\end{align}
	By Lemma \ref{lem:interpolation}, we have
	\begin{align*}
		&(\ell^{p_A,1},\ell^1)_{\th,p}
		=\ell^{\beta,p},\\
		&\bigl(L^{p_A}(I;L^{q_A}(X)),L^\I(I;L^1(X))\bigr)_{\th,p}
		=L^p(I;L^{q,p}(X)).
	\end{align*}
	Consequently, by real interpolation, 
	\[
	\|T_\Phi\nu\|_{L^p(I;L^{q,p}(X))}
	\ls
	\|\nu\|_{\ell^{\beta,p}}.
	\]
	Since $(1/q,1/p)\in(A,F)$, we have $p<q$.
	Moreover, $p_A\le p$ gives $\beta\le p$. Hence,
	\[
	\ell^\beta\hookrightarrow\ell^{\beta,p},
	\qquad
	L^{q,p}(X)\hookrightarrow L^q(X),
	\]
	and therefore
	\begin{equation}\label{eq:AF-strong-bound}
		\|T_\Phi\nu\|_{L^p(I;L^q(X))}
		\ls
		\|\nu\|_{\ell^\beta}
	\end{equation}
	for every point in $(F,A)$.
	We now fix one point $G=(1/q_0,1/p_0)\in(A,F)$ and let $\be_0 := 2q_0/(q_0+1)$.
	By \eqref{eq:AF-strong-bound},
	\begin{equation}\label{eq:p0 q0}
		T_\Phi:\ell^{\beta_0}\to L^{p_0}(I;L^{q_0}(X))
	\end{equation}
	is bounded.
	By complex interpolation between \eqref{eq:infty 1} and \eqref{eq:p0 q0}, we conclude that \eqref{eq:AF-strong-bound} holds for every point in $(B,G)$. Since $(B,G)\cup (F,A) = (B,A)$, we obtain \eqref{eq:AF-strong-bound} on $(B,A)$.

	We next assume that $\si>1$ and $(1/q,1/p)\in (A,D)$.
	Let $q_D := \si/(\si-1)$.
	Choose $\th\in(0,1)$ such that
	\[
	\frac1p = \frac{1-\th}{p_A} + \frac{\th}{1}, \quad 
	\frac1q = \frac{1-\th}{q_A} + \frac{\th}{q_D}.
	\]
	Then, by Lemma \ref{lem:interpolation},
	\begin{align*}
		&(\ell^{p_A,1},\ell^1)_{\th,p}
		=\ell^p,\\
		&\bigl(L^{p_A}(I;L^{q_A}(X)),L^1(I;L^{q_D}(X))\bigr)_{\th,p}
		=L^p(I;L^{q,p}(X)).
	\end{align*}
	By \eqref{eq:Keel Tao},
		\begin{equation}\label{eq:D-operator-bound}
			T_\Phi:\ell^1\to L^1(I;L^{q_D}(X))
		\end{equation}
	is bounded. 
	Interpolating \eqref{eq:endpoint boundedness} and \eqref{eq:D-operator-bound},
		\[
		\|T_\Phi\nu\|_{L^p(I;L^{q,p}(X))}\ls\|\nu\|_{\ell^p}.
		\]
		Since $p\le q$ on the segment $(A,D)$, the Lorentz-space inclusion gives
		\[
		\|T_\Phi\nu\|_{L^p(I;L^q(X))}\ls\|\nu\|_{\ell^p}.
		\]
	Since $q>q_A$, we have $\min(p,2q/(q+1))=p$ on $(A,D)$.
	Collecting the above arguments, we complete the proof of Corollary \ref{cor:abstract}.

	\subsection{Proof of Corollary \ref{cor:single-function}}\label{subsec:single-function}
	The refinement estimate \eqref{eq:single-lower-bound} follows from the standard Littlewood--Paley argument; see, for example,
	\cite{FrankSabinRestriction2017,BezLeeNakamura2021}.
	We prove the optimality assertion.
	Suppose that, for some $\ka\in[1,\I)$, the estimate
	\begin{equation}\label{eq:single-hypothetical-proof}
		\|e^{it\De}\ph\|_{L^{2p}_t(\R;L^{2q}_x(\R^d))}
		\ls
		\|\ph\|_{\dot B^0_{2,\ka}(\R^d)}
	\end{equation}
	holds for every $\ph\in\CS(\R^d)$ whose Fourier transform is
	supported away from the origin.
	We show that this forces $\ka\le 2p$.
	
	Fix a nonzero function $\ps\in\CS(\R^d)$ whose Fourier transform
	is supported in a sufficiently small annulus.
	For $k\in\Z$, set $\ps_k(x):=2^{kd/2}\ps(2^k x)$.
	Choose a bounded interval $I_0\subset\R$ of positive length such that
	\[
	\SC:=\|e^{it\De}\ps\|_{L^{2p}_t(I_0;L^{2q}_x)}
	>0.
	\]
	By a simple scaling argument, 
	\begin{equation*}
		\|e^{it\De}\ps_k\|_{L^{2p}_t(2^{-2k}I_0;L^{2q}_x)}=\SC,
	\end{equation*}
	where we used $2/(2p) + d/(2q)=d/2$.
	Fix also an integer $L\ge1$ sufficiently large and set $k_j:=jL$ for $j=1,\dots,M$.
	The Fourier supports of the functions $\ps_{k_j}$ are then contained
	in mutually separated dyadic annuli.
	
	Next, choose $t_1,\ldots,t_M\in\R$ so that the intervals $I_j:=t_j+2^{-2k_j}I_0$ for $j=1,\dots,M$ are pairwise disjoint.
	For arbitrary coefficients $c_1,\ldots,c_M\in\C$, define
	\[
	\Ps_M
	:=
	\sum_{j=1}^M
	c_j e^{-it_j\De}\ps_{k_j}.
	\]
	Since the sum is finite, $\Ps_M$ is a Schwartz function.
	Moreover, its Fourier transform is supported away from the origin.
	
	Note that for every $t\in\R$ and $j$,
	\[
	\|e^{it\De}\Ps_M\|_{L^{2q}_x}
	\gs
	|c_j|
	\|e^{i(t-t_j)\De}\ps_{k_j}\|_{L^{2q}_x}.
	\]
	Since the intervals $I_j$ are pairwise disjoint, it follows that
	\begin{align}
		\|e^{it\De}\Ps_M\|_{L^{2p}_t(\R;L^{2q}_x)}^{2p}
		&\ge \sum_{j=1}^M \|e^{it\De}\Ps_M\|_{L^{2p}_t(I_j;L^{2q}_x)}^{2p}\\
		&\gs \sum_{j=1}^M |c_j|^{2p} \|e^{i(t-t_j)\De}\ps_{k_j}\|_{L^{2p}_t(I_j;L^{2q}_x)}^{2p}
		= \SC^{2p} \sum_{j=1}^M |c_j|^{2p}.\label{eq:single-lower-bound}
	\end{align}
	Thus,
	\begin{equation}\label{eq:single-lower-ell}
		\|e^{it\De}\Ps_M\|_
		{L^{2p}_t(\R;L^{2q}_x)}
		\gs \SC \|c\|_{\ell^{2p}}.
	\end{equation}
	By the definition of the Besov norm, 
	\begin{equation}\label{eq:single-besov-upper}
		\|\Ps_M\|_{\dot B^0_{2,\ka}}
		\ls	\|\ps\|_{L^2} \|c\|_{\ell^\ka}.
	\end{equation}
	Applying \eqref{eq:single-hypothetical-proof} to $\ph=\Ps_M$ and combining
	\eqref{eq:single-lower-ell} with
	\eqref{eq:single-besov-upper}, we obtain
	\[
	\|c\|_{\ell^{2p}}
	\ls
	\|c\|_{\ell^\ka},
	\]
	where the implicit constant is independent of $M$ and $c_1,\dots,c_M$.
	Therefore, we conclude that $\ka\le 2p$ is necessary.

\end{document}